\title{An example of $A_2$ Rogers-Ramanujan bipartition identities of level 3}
\author{Shunsuke Tsuchioka}
\address{Department of Mathematical and Computing Sciences, Institute of Science Tokyo, Tokyo 152-8551, Japan}
\email{tshun@kurims.kyoto-u.ac.jp}
\date{Dec 4, 2024}
\keywords{integer partitions,
Rogers-Ramanujan identities,
vertex operators,
cylindric partitions,
W algebras}
\subjclass[2020]{Primary~11P84, Secondary~17B69}
\tikzstyle{every picture}+=[remember picture]
\tikzstyle{na} = [baseline=-.5ex]
\tikzstyle{mine}= [arrows={angle 90}-{angle 90},thick]
\def\Llleftarrow{%
\lower2pt\hbox{\begingroup
\tikz
\draw[shorten >=0pt,shorten <=0pt] (0,3pt) -- ++(-1em,0) (0,1pt) -- ++(-1em-1pt,0) (0,-1pt) -- ++(-1em-1pt,0) (0,-3pt) -- ++(-1em,0) (-1em+1pt,5pt) to[out=-105,in=45] (-1em-2pt,0) to[out=-45,in=105] (-1em+1pt,-5pt);
\endgroup}
}
\newtheorem{Thm}{Theorem}[section]
\newtheorem{Def}[Thm]{Definition}
\newtheorem{Prop}[Thm]{Proposition}
\newtheorem{Lem}[Thm]{Lemma}
\newtheorem{Rem}[Thm]{Remark}
\newtheorem{Cor}[Thm]{Corollary}
\newtheorem{Ex}[Thm]{Example}
\newcommand{\YY}{Y}
\newcommand{\CP}[1]{\mathcal{C}_{#1}}
\newcommand{\LC}[1]{\mathcal{C}^{(#1)}}
\newcommand{\YP}[1]{Y_{+}(#1)}
\newcommand{\YM}[1]{Y_{-}(#1)}
\newcommand{\YMP}[1]{Y_{\pm}(#1)}
\newcommand{\GEE}{\mathfrak{g}}
\newcommand{\GE}{\widetilde{\mathfrak{g}}}
\newcommand{\GAA}{\widetilde{\mathfrak{a}}}
\newcommand{\GAAA}{\widehat{\mathfrak{a}}}
\newcommand{\LEXO}[1]{<^{#1}_{\mathsf{lex}}}
\newcommand{\LEXXO}[1]{\leq^{#1}_{\mathsf{lex}}}
\DeclareMathOperator{\LEX}{<_{\mathsf{lex}}}
\DeclareMathOperator{\LEXX}{\leq_{\mathsf{lex}}}
\DeclareMathOperator{\SHAPE}{\mathsf{shape}}
\DeclareMathOperator{\SEQ}{\mathsf{Seq}}
\DeclareMathOperator{\RED}{\mathsf{Irr}}
\DeclareMathOperator{\ZSEQ}{\mathsf{FS}}
\DeclareMathOperator{\NSEQ}{\mathsf{FS}_0}
\DeclareMathOperator{\SORT}{\mathsf{sort}}
\DeclareMathOperator{\MAT}{Mat}
\DeclareMathOperator{\SPAN}{\mathsf{span}}
\DeclareMathOperator{\TPAR}{\mathsf{Par}_{2\mathsf{color}}}
\DeclareMathOperator{\IND}{\mathsf{Ind}}
\DeclareMathOperator{\MAP}{\mathsf{Map}}
\DeclareMathOperator{\SHIFT}{\mathsf{shift}}
\DeclareMathOperator{\BIR}{\mathsf{R}}
\DeclareMathOperator{\BIRP}{\mathsf{R}'}
\DeclareMathOperator{\TR}{\mathsf{tr}}
\DeclareMathOperator{\DEG}{\mathsf{deg}}
\DeclareMathOperator{\END}{\mathsf{End}}
\newcommand{\ggeq}{\mathrel{\underline{\gg}}}
\newcommand{\lleq}{\mathrel{\underline{\ll}}}
\DeclareMathOperator{\NAANO}{\vartriangleleft}
\DeclareMathOperator{\AANO}{\trianglelefteq}
\DeclareMathOperator{\ANO}{\gg}
\DeclareMathOperator{\ANOE}{\ggeq}
\DeclareMathOperator{\ANOO}{\lleq}
\DeclareMathOperator{\SUPP}{\mathsf{Supp}}
\DeclareMathOperator{\COLOR}{\mathsf{color}}
\DeclareMathOperator{\CONT}{\mathsf{cont}}
\DeclareMathOperator{\CONTT}{\mathsf{cont}}
\DeclareMathOperator{\LENGTH}{\ell}
\DeclareMathOperator{\LM}{\mathsf{LM}}
\DeclareMathOperator{\PAI}{\pi^{\bullet}}
\newcommand{\HPRIN}{H^{\textrm{prin}}}
\newcommand{\HPPRIN}{H^{\rm{prin}}}
\DeclareMathOperator{\isom}{\to}
\newcommand{\SUM}[1]{|#1|}
\newcommand{\cint}[1]{{\underline{#1}}_c}
\newcommand{\EXTE}[1]{\widehat{#1}}
\newcommand{\REG}[1]{\mathsf{CReg}_{#1}}
\newcommand{\JO}[1]{$q_{#1}$}
\newcommand{\JOO}[1]{q_{#1}}
\newcommand{\ENU}{N}
\newcommand{\KEI}[1]{K_{#1}}
\newcommand{\AOX}[1]{X(#1)}
\newcommand{\AOB}[1]{B(#1)}
\newcommand{\emptypar}{\emptyset}
\newcommand{\mya}{a}
\newcommand{\myb}{b}
\newcommand{\myc}{c}
\newcommand{\myd}{d}
\newcommand{\mye}{e}
\newcommand{\myf}{f}
\newcommand{\myg}{g}
\newcommand{\myh}{h}
\newcommand{\myi}{i}
\newcommand{\myj}{j}
\newcommand{\myk}{k}
\newcommand{\myl}{l}
\newcommand{\mym}{m}
\newcommand{\myphi}{\Psi}
\newcommand{\TTE}{\widetilde{\varepsilon}}
\newcommand{\TTP}{\widetilde{p}}
\newcommand{\TTQ}{\widetilde{q}}
\newcommand{\TTR}{\widetilde{r}}
\newcommand{\TTS}{\widetilde{s}}
\newcommand{\EMPTYWORD}{\varepsilon}
\newcommand{\NONC}{\mathcal{E}}
\newcommand{\CZ}{\cint{\mathbb{Z}}}
\newcommand{\AZ}{\mathbb{Z}_{2\mathsf{color}}}
\newcommand{\MP}{P^{+}}
\newcommand{\VZ}{v}
\newcommand{\VU}{u}
\newcommand{\BUI}[1]{Y(#1)v}
\newcommand{\BUII}[2]{Y(#1)v_{#2}}
\newcommand{\BUIII}[1]{Y(#1)w}
\newcommand{\RR}{\mathsf{RR}_i}
\begin{document}
\maketitle

\begin{abstract}
We give manifestly positive Andrews-Gordon type series for the
level 3 standard modules of the affine Lie algebra of type $A^{(1)}_2$.
We also give corresponding bipartition identities,
which have representation theoretic interpretations via the vertex operators.
Our proof is based on
the Borodin product formula, the Corteel-Welsh recursion for the cylindric partitions,
a $q$-version of Sister Celine's technique
and a generalization of Andrews' partition ideals by finite automata due to Takigiku and the author. 
\end{abstract}

\section{Introduction}
\subsection{The Rogers-Ramanujan partition identities}
A partition $\lambda=(\lambda_1,\dots,\lambda_{\ell})$ of a nonnegative integer $n$ is 
a weakly decreasing sequence of positive integers (called parts) whose sum $|\lambda|=\lambda_1+\dots+\lambda_{\ell}$ 
(called the size) is $n$.
The celebrated Rogers-Ramanujan partition identities are stated as follows. 
\begin{quotation}\label{eq:RR:PT}
Let $i=1,2$. The number of partitions of $n$ such that parts are at least $i$ and consecutive 
parts differ by at least $2$ is equal to the number of partitions of $n$ 
into parts congruent to $\pm i$ \mbox{modulo $5$.}
\end{quotation}

As $q$-series identities,
the Rogers-Ramanujan identities are stated as 
\begin{align}
    \sum_{n\ge 0} \frac{q^{n^2}}{(q;q)_n}
    = \frac{1}{(q,q^4;q^5)_\infty},
    \quad
    \sum_{n\ge 0} \frac{q^{n^2+n}}{(q;q)_n}
    = \frac{1}{(q^2,q^3;q^5)_\infty},
\label{eq:RR:q}
\end{align}
where, as usual, for $n\in\mathbb{Z}_{\geq0} \sqcup \{\infty\}$, we define
the $q$-Pochhammer symbols by
\begin{align*}
    (a;q)_n = \prod_{0\leq j<n} (1-aq^j),
    \quad
    (a_1,\dots,a_k;q)_{n}
    = (a_1;q)_n \cdots (a_k;q)_n.
\end{align*}

\subsection{The main result}\label{mainse}

A bipartition of a nonnegative integer $n$ is a pair of partitions $\boldsymbol{\lambda}=(\lambda^{(1)},\lambda^{(2)})$ such that $|\lambda^{(1)}|+|\lambda^{(2)}|=n$.
A 2-colored partition $\lambda=(\lambda_1,\dots,\lambda_{\ell})$ is a weakly decreasing sequence of positive integers and colored positive integers (called parts)
with respect to the order
\begin{align}
\cdots>3>\cint{3}>2>\cint{2}>1>\cint{1}.
\label{orde}
\end{align}
We define the size $|\lambda|$ of $\lambda$ by $|\lambda|=\CONT(\lambda_1)+\dots+\CONT(\lambda_{\ell})$,
where $\CONT(k)=\CONT(\cint{k})=k$ 
for a positive integer $k$.
It is evident that there exists a natural identification between the bipartitions of $n$ and the 2-colored partitions of $n$.

We put $\COLOR(k)=+$ and $\COLOR(\cint{k})=-$ for a positive integer $k$.
For a 2-colored partition $\lambda=(\lambda_1,\dots,\lambda_{\ell})$, we consider the following conditions.
\begin{enumerate}
\item[(D1)] If consecutive parts $\lambda_a$ and $\lambda_{a+1}$, where $1\leq a<\ell$, 
satisfy $\CONT(\lambda_a)-\CONT(\lambda_{a+1})\leq 1$, then $\CONT(\lambda_a)+\CONT(\lambda_{a+1})\in 3\mathbb{Z}$ and $\COLOR(\lambda_a)\ne\COLOR(\lambda_{a+1})$.
\item[(D2)] If consecutive parts $\lambda_a$ and $\lambda_{a+1}$, where $1\leq a<\ell$, 
satisfy $\CONT(\lambda_a)-\CONT(\lambda_{a+1})=2$ and $\CONT(\lambda_a)+\CONT(\lambda_{a+1})\not\in 3\mathbb{Z}$, then $(\COLOR(\lambda_a),\COLOR(\lambda_{a+1}))\ne(-,+)$. 
\item[(D3)] $\lambda$ does not contain $(3k,\cint{3k},\cint{3k-2})$, $(3k+2,3k,\cint{3k})$ and $(\cint{3k+2},3k+1,3k-1,\cint{3k-2})$ for $k\geq 1$.
\item[(D4)] $\lambda$ does not contain $1$, $\cint{1}$, and $\cint{2}$ as parts 
(i.e., $\lambda_a\ne 1,\cint{1},\cint{2}$
for $1\leq a\leq\ell$).
\end{enumerate}

On (D3), we say that $\lambda=(\lambda_1,\dots,\lambda_{\ell})$ contains
$\mu=(\mu_1,\dots,\mu_{\ell'})$ if there exists $0\leq i\leq \ell-\ell'$ such that
$\lambda_{j+i}=\mu_j$ for $1\leq j\leq \ell'$.

\begin{Thm}\label{RRbiiden}
Let $\BIR$ (resp. $\BIRP$) the set of 2-colored partitions that satisfy the conditions (D1)--(D3) 
(resp. (D1)--(D4)) above. Then, we have
\begin{align*}
\sum_{\lambda\in\BIR}q^{|\lambda|} &= \frac{(q^2,q^4;q^6)_{\infty}}{(q,q,q^3,q^3,q^5,q^5;q^6)_{\infty}},\\
\sum_{\lambda\in\BIRP}q^{|\lambda|} &= \frac{1}{(q^2,q^3,q^3,q^4;q^6)_{\infty}}
\end{align*}
\end{Thm}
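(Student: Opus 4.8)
The plan is to prove both identities by the standard ``two sides satisfy the same $q$-difference system'' strategy: realize each product as the solution of a finite closed system of linear $q$-difference equations coming from cylindric partitions, produce an analogous system for the difference-condition side via a finite automaton, reduce both to a single $q$-recurrence in a shift variable, and conclude by uniqueness after matching finitely many initial $q$-coefficients.

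First I would handle the product (right-hand) sides. These are principal characters of the level $3$ standard modules of $A^{(1)}_2$, and I would realize them as generating functions of cylindric partitions of the appropriate profile of length $3$ and level $3$. By the Borodin product formula the generating function of cylindric partitions of a fixed such profile is exactly a product of the shape appearing on the right (up to the normalizing factor $1/(q;q)_\infty$). The Corteel--Welsh recursion then gives a finite closed system of linear $q$-difference equations relating the $z$-graded cylindric generating functions across all profiles at this level, and together with their obvious initial values this system pins down the product side.

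Next, for the combinatorial (left-hand) sides, I would exploit that the conditions (D1)--(D4) are all \emph{local}: (D1) and (D2) constrain only adjacent parts through $\CONT(\lambda_a)\pm\CONT(\lambda_{a+1})$, the residues modulo $3$, and the colors, while (D3) forbids a fixed finite list of factors of length at most $4$. Reading a $2$-colored partition from its largest part downward, membership in $\BIR$ (resp.\ $\BIRP$) is therefore recognized by a finite automaton whose states remember the last one or two parts modulo the relevant residue data; this is precisely the Andrews partition-ideal formalism as generalized by finite automata in the Takigiku--Tsuchioka framework. From the automaton I would attach one generating function $G_s(z,q)$ to each state $s$, with $z$ marking a uniform shift of all parts, so that peeling off the smallest part and shifting yields a matrix relation $\mathbf{G}(z)=M(z,q)\,\mathbf{G}(zq)$ on finitely many states. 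The $q$-analogue of Sister Celine's technique applied to this data then produces a single linear $q$-recurrence annihilating $\sum_{\lambda\in\BIR}q^{|\lambda|}$; I would show (by eliminating the auxiliary profiles from the Corteel--Welsh system, or by a second Sister-Celine certificate) that the product side obeys the same recurrence, and check agreement of enough initial terms. The identity for $\BIRP$ then follows from that for $\BIR$ by imposing the extra forbidden parts $1,\cint{1},\cint{2}$ of (D4), which is a boundary specialization of the same functional system.

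The hard part will be twofold. The delicate modeling step is building the automaton so that it faithfully and simultaneously encodes the content-sum divisibility and color clause of (D1), the directed color restriction of (D2), and the forbidden triples and quadruple of (D3), and then verifying that the induced $q$-difference system genuinely closes on a finite state set. Equally demanding is the elimination step: bringing the automaton system and the Corteel--Welsh system into a common $q$-recurrence of sufficiently low order that only finitely many initial coefficients need to be compared, and actually certifying that comparison, is where the computer-assisted Sister-Celine computation carries the real weight.
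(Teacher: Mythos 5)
Your overall architecture matches the paper's: you handle the product sides via cylindric partitions (Borodin's product formula for $F_{\boldsymbol{c}}(1,q)$, the Corteel--Welsh recursion for the $x$-graded functions $G_{\boldsymbol{c}}(x,q)$) and the combinatorial sides via a finite-automaton encoding of (D1)--(D4) in the Takigiku--Tsuchioka framework, each reduced to a single $q$-difference equation (in the paper this elimination is done by the Murray--Miller algorithm, not by Sister Celine's technique, which is a tool for explicit multi-sums of $q$-hypergeometric terms rather than for automaton systems).

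However, your bridging step has a genuine gap. You propose to show that the combinatorial side and the product side "obey the same recurrence" and then compare finitely many initial coefficients. This cannot work as stated, because the two natural two-variable refinements are \emph{different} series: on the automaton side $x$ marks the number of parts $\ell(\lambda)$, while on the cylindric side $x$ marks the largest part $\max(\boldsymbol{\lambda})$, and these gradings genuinely disagree --- for instance the coefficient of $x^1$ in $f_{\BIR}(x,q)$ is $2q/(1-q)$, whereas in $G_{(1,1,1)}(x,q)$ it is $2q/(1-q)+q^3/(1-q^3)$. Consequently the two $q$-difference equations (Propositions \ref{PropqdiffR} and \ref{PropqdiffG}) are different equations with different solutions, equality holds only after setting $x=1$, and a recurrence-plus-initial-conditions argument needs the full two-variable structure, so there is no common recurrence to exhibit. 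The paper's essential extra ingredient, absent from your proposal, is the explicit Andrews--Gordon four-fold sum with quadratic form $a^2+b^2+3c^2+3d^2+2ab+3ac+3ad+3bc+3bd+6cd$ (and its companion with linear terms): this one summand carries \emph{both} gradings, being certified by Sister Celine type certificates to satisfy the automaton-side equation when weighted by $x^{a+b+2c+2d}$ (Theorem \ref{RRidentAG}) and the cylindric-side equation when weighted by $x^{a+b+c+2d}$ (Propositions \ref{g111} and \ref{g300}); at $x=1$ the two specializations coincide, which is exactly what yields $f_{\BIR}(1,q)=G_{(1,1,1)}(1,q)$ and $f_{\BIRP}(1,q)=G_{(3,0,0)}(1,q)$. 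Relatedly, the $\BIRP$ identity is not a "boundary specialization" of the $\BIR$ one: it requires its own cylindric profile $(3,0,0)$, its own $q$-difference equation, and its own certificate, run in parallel.
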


\begin{Ex}\label{exbipar}
We have
\begin{align*}
\BIR &= \{\emptypar,
(1),(\cint{1}),(2),(\cint{2}),(3),(\cint{3}),(2,\cint{1}),(\cint{2},1),
(4),(\cint{4}),(3,1),(3,\cint{1}),(\cint{3},\cint{1}),\dots\},\\
\BIRP &= \{\emptypar,
(2),(3),(\cint{3}),(4),(\cint{4}),(5),(\cint{5}),(6),(\cint{6}),(4,2),(\cint{4},2),(3,\cint{3}),(7),(\cint{7}),(5,2),(\cint{5},2),\dots\}.
\end{align*}
\end{Ex}

We propose the aforementioned bipartition identities (in terms of 2-colored partitions)
as $A_2$ Rogers-Ramanujan bipartition identities of level 3. In the rest of this section,
we give some justifications for our proposal.

\subsection{Lie theoretic interpretations}
Throughout this paper, we denote by $\GEE(A)$ the Kac-Moody Lie algebra associated with
a generalized Cartan matrix (GCM, for short) $A$.
For an affine GCM $A$ and
a dominant integral weight $\Lambda\in\MP$, we denote by $\chi_{A}(V(\Lambda))$
the principal character of the standard module (a.k.a., the integrable highest weight module) $V(\Lambda)$ with a highest 
weight vector $v_{\Lambda}$ of the affine Lie algebra $\GEE(A)$.

In ~\cite{LM}, Lepowsky and Milne observed a similarity between the characters 
of the level 3 standard modules of the affine Lie algebra of type $A^{(1)}_{1}$
\begin{align}
\chi_{A^{(1)}_1}(V(2\Lambda_0+\Lambda_1)) = 
\frac{1}{(q,q^4;q^5)_{\infty}},\quad
\chi_{A^{(1)}_1}(V(3\Lambda_0)) = 
\frac{1}{(q^2,q^3;q^5)_{\infty}}.
\label{a11level3}
\end{align}
and the infinite products of the Rogers-Ramanujan identities \eqref{eq:RR:q}.

This was one of the motivations for inventing the vertex operators~\cite{LW1} as well as ~\cite{FK,Seg}. 
Subsequently, in ~\cite{LW3}, Lepowsky and Wilson promoted the observation \eqref{a11level3} to 
a vertex operator theoretic proof of \eqref{eq:RR:q} (see also ~\cite{LW2}),
which provides a Lie theoretic interpretations of the infinite sums in \eqref{eq:RR:q}.
The result is generalized to higher levels in ~\cite{LW4}, assuimg the Andrews-Gordon-Bressoud identities 
(a generalization of the Rogers-Ramanujan identities, see ~\cite[\S3.2]{Sil}), 
for which Meurman and Primc gave a vertex operator theoretic proof in ~\cite{MP}.

Recall the principal realization of the affine Lie algebra $\GEE(A^{(1)}_{1})$ (see ~\cite[\S7,\S8]{Kac}).
Using the notation in ~\cite[\S2]{MP}, it affords a basis 
\begin{align*}
\{\AOB{n},\AOX{n'},c,d\mid n\in\mathbb{Z}\setminus 2\mathbb{Z}, n'\in\mathbb{Z}\},
\end{align*}
of $\GEE(A^{(1)}_{1})$. Note that
$\{\AOB{n},c\mid n\in\mathbb{Z}\setminus 2\mathbb{Z}\}$ forms 
a basis of the principal Heisenberg subalgebra of $\GEE(A^{(1)}_{1})$.
The following is essentially the Lepowsky-Wilson interpretation of 
the Rogers-Ramanujan partition identities
in terms of the representation theory of $\GEE(A^{(1)}_1)$ (see also \cite[Theorem 10.4]{LW3} and \cite[Appendix]{MP}).

\begin{Thm}[{\cite{LW3,MP}}]
For $i=1,2$, let $\RR$ be the set of partitions such that parts are at least $i$ and consecutive 
parts differ by at least $2$. Then, the set
\begin{align*}
\{\AOB{-\mu_1}\cdots\AOB{-\mu_{\ell'}}\AOX{-\lambda_1}\cdots\AOX{-\lambda_\ell}v_{(i+1)\Lambda_0+(2-i)\Lambda_1}\}
\end{align*}
forms a basis of $V((i+1)\Lambda_0+(2-i)\Lambda_1)$,
where $(\mu_1,\dots,\mu_{\ell'})$ varies in $\REG{2}$
and $(\lambda_1,\dots,\lambda_{\ell})$ varies in $\RR$.
\end{Thm}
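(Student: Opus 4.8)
Set $\Lambda=(i+1)\Lambda_0+(2-i)\Lambda_1$, a dominant integral weight of level $3$. The plan is to follow the $Z$-algebra method of Lepowsky and Wilson, separating off the principal Heisenberg subalgebra and reducing the statement to a spanning result on the vacuum space together with a character count. Write $\widehat{\mathfrak{s}}$ for the principal Heisenberg subalgebra, spanned by $\{\AOB{n}\mid n\in\mathbb{Z}\setminus2\mathbb{Z}\}$ and $c$, and let $\widehat{\mathfrak{s}}^{-}$ be the span of the $\AOB{-n}$ with $n>0$ odd. Since $c$ acts on $V(\Lambda)$ as the nonzero scalar $3$, the module is free over $U(\widehat{\mathfrak{s}}^{-})$; writing $\Omega=\{w\in V(\Lambda)\mid \AOB{n}w=0 \text{ for all odd } n>0\}$ for the vacuum space, one obtains a graded isomorphism $U(\widehat{\mathfrak{s}}^{-})\otimes\Omega\xrightarrow{\sim}V(\Lambda)$, and the $\AOB{-\mu_1}\cdots\AOB{-\mu_{\ell'}}$ with $(\mu_1,\dots,\mu_{\ell'})\in\REG{2}$ give a PBW basis of $U(\widehat{\mathfrak{s}}^{-})$. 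Thus it suffices to prove that the vectors $\AOX{-\lambda_1}\cdots\AOX{-\lambda_\ell}v_\Lambda$ with $\lambda\in\RR$ form a basis of $\Omega$. To compute inside $\Omega$ I would pass from the fields $\AOX{n}$ to the $Z$-operators $Z(n)$, obtained by dressing $\sum_n\AOX{n}z^{-n}$ with exponentials of the Heisenberg generators; the $Z(n)$ commute with $\widehat{\mathfrak{s}}$ up to the grading and preserve $\Omega$, and the induced change of spanning set $\{\AOB{-\mu}\,\AOX{-\lambda}v_\Lambda\}\leftrightarrow\{\AOB{-\mu}\,Z(-\lambda)v_\Lambda\}$ is unitriangular for the filtration by Heisenberg degree, hence invertible. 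So it is enough to treat the $Z$-monomials on $\Omega$.

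For the spanning step I would derive the $Z$-algebra relations, namely the identities satisfied by $\sum_n Z(n)z^{-n}$ that follow from the defining relations of $\GEE(A^{(1)}_1)$ — in particular the integrability relations on $v_\Lambda$ at level $3$ — together with the vertex-operator commutation formula. After normal ordering, each relation coupling a pair of adjacent indices $\lambda_a,\lambda_{a+1}$ that violate the Rogers-Ramanujan constraints (difference less than $2$, or a part smaller than $i$) produces a straightening rule rewriting $Z(-\lambda_a)Z(-\lambda_{a+1})$ as a combination of products whose index multiset is strictly larger in a fixed monomial order. Iterating the straightening, which terminates since the order well-orders the monomials of each fixed principal degree, shows that the $Z(-\lambda_1)\cdots Z(-\lambda_\ell)v_\Lambda$ with $\lambda\in\RR$ span $\Omega$.

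It then remains to count. By the factorization $V(\Lambda)\cong U(\widehat{\mathfrak{s}}^{-})\otimes\Omega$, the graded dimension of $\Omega$ is the full principal specialization of the character of $V(\Lambda)$ times the Fock factor $\prod_{n\ge1,\,n\ \mathrm{odd}}(1-q^n)$; this is precisely the principal character $\princhar{A^{(1)}_1}{V(\Lambda)}$ recorded in \eqref{a11level3}. By the classical Rogers-Ramanujan identities \eqref{eq:RR:q}, the product sides there equal $\sum_{\lambda\in\RR}q^{|\lambda|}$. Because each principal-degree component of $V(\Lambda)$, hence of $\Omega$, is finite dimensional, a graded spanning set whose cardinality in every degree equals the graded dimension is automatically a basis; combined with the spanning step this shows the $Z$-monomials form a basis of $\Omega$. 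Transporting back through the unitriangular change of variables and tensoring with the Heisenberg PBW basis yields the asserted basis of $V(\Lambda)$.

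The main obstacle is the spanning step. The delicate point is to show that the $Z$-algebra relations arising from the level-$3$ integrability relations suffice to reduce an arbitrary monomial to an $\RR$-monomial — that no further relations are needed — and to check that the straightening rules output exactly the difference-$2$ spacing and the threshold $i$ in the definition of $\RR$. Here the level-$3$ hypothesis is essential: it fixes the order of the vanishing relation among the vertex operators, which is what forces the gap $2$ and the initial condition. This is the technical heart of the Lepowsky-Wilson and Meurman-Primc analysis.
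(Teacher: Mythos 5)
The paper does not prove this statement: it is imported verbatim from Meurman--Primc \cite{MP} (with pointers to \cite{LW3}) as motivating background, so the only meaningful comparison is with the proof in \cite{MP} and with the strategy this paper uses for its $A_2$ analogue, Theorem \ref{biideninter}. Your architecture --- split off the principal Heisenberg subalgebra to get $V(\Lambda)\cong U(\widehat{\mathfrak{s}}^-)\otimes\Omega$ with $\Omega$ the vacuum space, pass to $Z$-operators, prove spanning of $\Omega$ by straightening against the $Z$-algebra relations, then upgrade spanning to a basis by a graded dimension count --- is sound, and the count is non-circular: the principal characters \eqref{a11level3} come from the Weyl--Kac character formula, and the $q$-series identities \eqref{eq:RR:q} have classical proofs independent of Lie theory. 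But this is genuinely not the route of \cite{MP}: there linear independence is established Lie-theoretically, which is precisely what makes the Rogers--Ramanujan identities a \emph{corollary} rather than an input; your argument consumes \eqref{eq:RR:q}, so it proves the basis statement but could never reprove the identities (it is closer in spirit to the higher-level argument of \cite{LW4}, which assumed Andrews--Gordon--Bressoud). Interestingly, your route is exactly the architecture this paper adopts for its own main theorem: spanning by vertex-operator calculations (\S\ref{maincomp}--\S\ref{cal}, Corollary \ref{biidenintercor}) combined with a character identity proven by independent combinatorial means (\S\ref{auto}--\S\ref{finalsec}), rather than a direct independence proof. One small correction: the $Z$-operators commute with the principal Heisenberg algebra \emph{exactly}, not merely ``up to the grading''; this exact commutation is what makes them preserve $\Omega$ and act on it.

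That said, the proposal is incomplete exactly where you say it is. Deriving the level-$3$ $Z$-algebra relations, normal ordering them, and verifying that the resulting straightening rules (i) rewrite every monomial violating the constraints into strictly larger ones in a well-founded order, and (ii) carve out exactly the difference-two condition together with the initial condition ``parts at least $i$'' (the latter coming from relations involving the highest weight vector, not from integrability alone) constitutes the entire technical content of \cite{LW3,MP}. In this paper's $A_2$ setting the corresponding work occupies all of \S\ref{maincomp} and \S\ref{cal}: the order $\LEX$, the filtration argument of Lemma \ref{sortres3}, and the lengthy case-by-case expansions establishing Theorem \ref{forthm}. Your outline names the right mechanism but executes none of it, so what you have is a correct reduction of the theorem to the spanning step, together with a valid counting argument, rather than a complete proof.
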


Here, we denote by $\REG{p}$ the set of $p$-class regular partitions for $p\geq 2$.
Recall that a partition 
is called $p$-class regular if no parts are divisible by $p$. 

We show a similar interpretation for $\BIR$ and $\BIRP$.
Using the notation in \S\ref{vertset}, 
\begin{align*}
\{
B(n),x_{\alpha_1}(n'),x_{-\alpha_1}(n'),c,d
\mid n\in\mathbb{Z}\setminus3\mathbb{Z},n'\in\mathbb{Z}
\}
\end{align*}
forms a basis of $\GEE(A^{(1)}_2)$ (see also Remark \ref{sl3basis}). 

\begin{Thm}\label{biideninter}
For $i=1$ (resp. $i=2$),
the set
\begin{align*}
\{\AOB{-\mu_1}\cdots\AOB{-\mu_{\ell'}}
x_{\COLOR(\lambda_1)\alpha_1}(-\!\CONTT(\lambda_1))\cdots x_{\COLOR(\lambda_{\ell})\alpha_1}(-\!\CONTT(\lambda_{\ell}))
v_{(2i-1)\Lambda_0+(2-i)\Lambda_1+(2-i)\Lambda_2}\}
\end{align*}
forms a basis of $V((2i-1)\Lambda_0+(2-i)\Lambda_1+(2-i)\Lambda_2)$, where $(\mu_1,\dots,\mu_{\ell'})$ varies in $\REG{3}$
and $(\lambda_1,\dots,\lambda_{\ell})$ varies in $\BIR$ (resp. $\BIRP$).
\end{Thm}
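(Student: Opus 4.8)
The plan is to adapt the Lepowsky--Wilson / Meurman--Primc method, feeding in Theorem~\ref{RRbiiden} to supply the graded dimensions. Write $\Lambda=(2i-1)\Lambda_0+(2-i)\Lambda_1+(2-i)\Lambda_2$ and let $\mathfrak{s}$ be the principal Heisenberg subalgebra of $\GEE(A^{(1)}_2)$, with $\mathfrak{s}^-=\SPAN\{\AOB{-n}\mid n>0,\ 3\nmid n\}$. By the standard theory of the principal Heisenberg subalgebra, $V(\Lambda)$ is free over $U(\mathfrak{s}^-)$ and $V(\Lambda)\cong U(\mathfrak{s}^-)\otimes\Omega_\Lambda$ as graded vector spaces, where $\Omega_\Lambda=\{w\in V(\Lambda)\mid \mathfrak{s}^+ w=0\}$ is the vacuum space; a PBW basis of $U(\mathfrak{s}^-)$ is given by the monomials $\AOB{-\mu_1}\cdots\AOB{-\mu_{\ell'}}$ with $(\mu_1,\dots,\mu_{\ell'})\in\REG{3}$, which is exactly the Heisenberg factor of the asserted basis. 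Hence the theorem reduces to showing that the vectors $x_{\COLOR(\lambda_1)\alpha_1}(-\CONT(\lambda_1))\cdots x_{\COLOR(\lambda_\ell)\alpha_1}(-\CONT(\lambda_\ell))v_\Lambda$, for $\lambda\in\BIR$ (resp.\ $\BIRP$), together with the $U(\mathfrak{s}^-)$-action realize a basis; equivalently, passing to the $Z$-operators obtained by removing the Heisenberg dressing $E^\pm(\zeta)$ from $x_{\pm\alpha_1}(\zeta)$, that the corresponding $Z$-monomials form a basis of $\Omega_\Lambda$.

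Next I would install the dimension count. The principal character $\chi_{A^{(1)}_2}(V(\Lambda))$ is computable from the Weyl--Kac character formula via Lepowsky's principal specialization, and the factorization $V(\Lambda)\cong U(\mathfrak{s}^-)\otimes\Omega_\Lambda$ gives $\chi_{A^{(1)}_2}(V(\Lambda))=\frac{1}{(q,q^2;q^3)_\infty}\cdot\mathrm{ch}\,\Omega_\Lambda$, the first factor being the character of $U(\mathfrak{s}^-)$. Granting Theorem~\ref{RRbiiden}, it is then a direct verification of an identity between infinite products that $\mathrm{ch}\,\Omega_\Lambda=\sum_{\lambda\in\BIR}q^{|\lambda|}$ for $i=1$ (resp.\ $\sum_{\lambda\in\BIRP}q^{|\lambda|}$ for $i=2$). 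Consequently the proposed set of $x$-monomials has, in each principal degree, exactly $\dim(\Omega_\Lambda)$-many elements; since $\Omega_\Lambda$ has finite-dimensional graded pieces, it then suffices to prove that these $x$-monomials \emph{span} $\Omega_\Lambda$ (a spanning set whose graded cardinalities match the dimensions is automatically a basis), and likewise for the full module.

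The spanning step is carried out as follows. Because $v_\Lambda$ is annihilated by every operator of positive principal degree and generates $V(\Lambda)$, the module is spanned by the ordered monomials $\AOB{-\mu}\,x_{\epsilon_1\alpha_1}(-n_1)\cdots x_{\epsilon_k\alpha_1}(-n_k)v_\Lambda$ with $\mu\in\REG{3}$ and $\epsilon_j\in\{+,-\}$. I would derive the generalized commutation relations among the principal vertex operators $x_{\pm\alpha_1}(\zeta)$ from their operator product expansions together with the level-$3$ integrability relations, and use them to rewrite any such monomial, modulo monomials that are strictly smaller in the lexicographic order $\LEX$ induced by \eqref{orde} or that carry an extra Heisenberg factor absorbed into the $\mu$-part, as a combination of monomials obeying the difference conditions (D1)--(D3). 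The initial conditions are supplied by the low principal-degree action of $x_{\pm\alpha_1}$ on $v_\Lambda$: for $i=1$ one has $\langle\Lambda,\alpha_1^\vee\rangle=1$, whereas for $i=2$, where $\Lambda=3\Lambda_0$ and $\langle 3\Lambda_0,\alpha_1^\vee\rangle=0$, the extra vanishings translate into the exclusion (D4) of the parts $1,\cint{1},\cint{2}$. Well-foundedness of $\LEX$ on $2$-colored partitions guarantees that the reduction terminates, yielding a spanning set indexed by $\BIR$ (resp.\ $\BIRP$).

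The \emph{main obstacle} is the third step: extracting the exact generalized commutation relations for the principal realization of $\GEE(A^{(1)}_2)$ and proving that their iterated application straightens every monomial into a (D1)--(D3)-admissible one. In contrast to the $A^{(1)}_1$ situation, where a single relation yields the two-term difference-$2$ condition, the $A_2$ structure is responsible for the genuinely three-term forbidden patterns of (D3); the delicate point is to show that precisely these patterns, and no others, resist straightening, and that every correction term is strictly lower in $\LEX$ so that the procedure is well founded. It is the dimension count coming from Theorem~\ref{RRbiiden} that allows the argument to stop at spanning, sparing us a separate and considerably harder proof of linear independence.
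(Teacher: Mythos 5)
Your proposal follows essentially the same route as the paper. The paper's proof of Theorem \ref{biideninter} is exactly the combination you describe: a spanning result proved by vertex-operator straightening (Corollary \ref{biidenintercor}, resting on the forbidden patterns of Theorem \ref{forthm}, the initial conditions of Proposition \ref{iniprop} and Corollary \ref{initcor} for the $i=2$ case, and the computations of \S\ref{cal}), combined with the graded dimension count supplied by Theorem \ref{RRbiiden} --- itself proved independently via the automaton-derived $q$-difference equations of \S\ref{auto}, Sister Celine's technique in \S\ref{proof} and \S\ref{proofp}, and the Corteel-Welsh recursion and Borodin product formula of \S\ref{cylin} --- together with the character evaluations \eqref{charcalc}; since the graded pieces are finite dimensional, a spanning set with the correct graded cardinalities is a basis. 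Your identification of the Heisenberg factor with $\REG{3}$, your derivation of (D4) from the highest weight for $\Lambda=3\Lambda_0$, and your closing observation that Theorem \ref{RRbiiden} spares one a direct proof of linear independence all have direct counterparts in the paper (one cosmetic point: in the paper's normalization, $\chi_{A^{(1)}_2}(V(\Lambda))$ in \eqref{charcalc} is already the vacuum-space character, so the product identity you propose to verify is \eqref{charcalc} itself).

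Two caveats. First, a difference of realization: you plan to work abstractly inside $V(\Lambda)$ with $Z$-operators on the vacuum space $\Omega_\Lambda$, whereas the paper never leaves the concrete model $W=V(\Lambda_0)^{\otimes 3}$: the two modules are realized as the submodules generated by $w=1\otimes1\otimes1$ and by $u_{a_1,a_2,a_3}$ with $a_1+a_2+a_3=0$ (Proposition \ref{high}; whence the harmless $\delta$-shift recorded in Corollary \ref{biidenintercor}), and reducibility is defined modulo $U(\GAAA_-)$-coefficients rather than by literally passing to $\Omega_\Lambda$. This choice is not cosmetic: the level-3 relations of Proposition \ref{mpprop} and every expansion in \S\ref{cal} are computed on $W$ using Figueiredo's explicit vertex operator realization of the basic module, and Remark \ref{notsub} shows the embedding into $W$ is genuinely load-bearing (it is the stated reason $V(2\Lambda_0+\Lambda_1)$ is excluded from the paper). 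Second, and more substantively, what you set aside as ``the main obstacle'' is the actual mathematical content of the paper's argument: identifying the precise forbidden patterns (the seven families of Theorem \ref{forthm}, which encode (D1)--(D3)), deriving each from explicit expansions of $R_{\pm}$, $R_1$, $R_2$ and the composite elements $Z_1(k)$, $Z_2(k)$, $Z_3(k)$, and checking that every correction term lies in $W_{>\boldsymbol{n}}$ --- that is, it has strictly fewer $x$-factors, or strictly larger size against a compensating $U(\GAAA_-)$-factor, or is strictly larger in the order $\LEX$ --- so that the straightening terminates. Your outline is the correct skeleton, but as a proof it stops exactly where the paper's work begins.
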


Note that Theorem \ref{biideninter} implies Theorem \ref{RRbiiden} thanks to
\begin{align}
\chi_{A^{(1)}_{2}}(V(\Lambda_0+\Lambda_1+\Lambda_2)) &= 
\frac{(q^2,q^4;q^6)_{\infty}}{(q,q,q^3,q^3,q^5,q^5;q^6)_{\infty}},\\
\chi_{A^{(1)}_{2}}(V(3\Lambda_0)) &= 
\frac{1}{(q^2,q^3,q^3,q^4;q^6)_{\infty}}.
\label{charcalc}
\end{align}

In \S\ref{maincomp} and \S\ref{cal}, we show that
the set in Theorem \ref{biideninter} spans $V((2i-1)\Lambda_0+(2-i)\Lambda_1+(2-i)\Lambda_2)$ (see Corollary \ref{biidenintercor}).
Thus, Theorem \ref{biideninter} and Theorem \ref{RRbiiden} are equivalent.

\subsection{$A_2$ Rogers-Ramanujan identities}\label{agthm}
A standard $q$-series technique to prove the Andrews-Gordon-Bressoud identities is the Bailey Lemma 
(see ~\cite[\S3]{An2} and ~\cite[\S3]{Sil}).
In ~\cite{ASW}, Andrews-Schilling-Warnaar found an $A_2$ analog of it and
obtained a family of Rogers-Ramanujan type identities for characters of the $W_3$ algebra.
The result can be regarded as an $A^{(1)}_2$ analog of the Andrews-Gordon-Bressoud identities, whose infinite products
are some of the principal characters of the standard modules of $\GEE(A^{(1)}_2)$ (see ~\cite[Theorem 5.1, Theorem 5.3, Theorem 5.4]{ASW}) after a multiplication of $(q;q)_{\infty}$.

In our case of level 3, Andrews-Schilling-Warnaar identities are stated as follows.

\begin{Thm}[{\cite[Theorem 5.4 specialized to $k=2$ and $i=2,1$]{ASW}}]
\begin{align*}
\sum_{s,t\geq 0}\frac{q^{s^2-st+t^2}}{(q;q)^2_{s+t}}{s+t \brack s}_{q^3}
&= \frac{1}{(q;q)_{\infty}}\frac{(q^2,q^4;q^6)_{\infty}}{(q,q,q^3,q^3,q^5,q^5;q^6)_{\infty}},\\
\sum_{s,t\geq 0}\frac{q^{s^2-st+t^2+s+t}}{(q;q)_{s+t+1}(q;q)_{s+t}}{s+t \brack s}_{q^3}
&= \frac{1}{(q;q)_{\infty}}\frac{1}{(q^2,q^3,q^3,q^4;q^6)_{\infty}}.
\end{align*}
\end{Thm}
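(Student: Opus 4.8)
The plan is to obtain both identities as the $k=2$ specialization of the general Rogers--Ramanujan family of \cite[Theorem 5.4]{ASW}, which that paper produces by iterating its $A_2$ Bailey lemma. First I would recall the general member of the family: starting from a seed Bailey pair attached to the $A_2$ root lattice and running the $A_2$ Bailey chain before letting the truncation parameter tend to infinity yields a multisum whose exponent is a positive-definite quadratic form assembled from $A_2$ norm forms $s^2-st+t^2$, and whose summand carries Gaussian binomials in base $q^3$ (the base $q^3$ being a hallmark of the $A_2$ setting). Setting $k=2$ keeps a single iteration, so the multisum collapses to one summation pair $(s,t)$, while the residue parameter $i$ records which standard module appears on the product side.

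For the sum side I would check this collapse termwise. At $k=2$ the quadratic form reduces to the single $A_2$ norm $s^2-st+t^2$, exactly one Gaussian binomial ${s+t\brack s}_{q^3}$ survives, and the two admissible values $i=2,1$ produce respectively the symmetric, unshifted summand with denominator $(q;q)^2_{s+t}$ and the shifted summand carrying the extra factor $q^{s+t}$ with denominator $(q;q)_{s+t+1}(q;q)_{s+t}$. This matching is a direct substitution into the general summand and needs no resummation.

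The real work is on the product side, and this is where I expect the main obstacle. In \cite{ASW} the product is presented through theta functions, equivalently the affine Weyl--Kac denominator, rather than as an explicit $q$-Pochhammer product, so I would rewrite it by grouping its cyclotomic factors according to residues modulo $6$ and isolating the Heisenberg factor $1/(q;q)_{\infty}$. The cleanest bookkeeping uses the principal characters already computed in \eqref{charcalc}: the two claimed right-hand sides are exactly $1/(q;q)_{\infty}$ times $\chi_{A^{(1)}_2}(V(\Lambda_0+\Lambda_1+\Lambda_2))$ and $1/(q;q)_{\infty}$ times $\chi_{A^{(1)}_2}(V(3\Lambda_0))$, so it suffices to verify that the ASW product for $k=2$ and $i=2,1$ equals $1/(q;q)_{\infty}$ times the corresponding principal character once the parameter dictionary (level $3\leftrightarrow k=2$, highest weight $\leftrightarrow i$) is fixed. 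The delicate points are tracking the single $1/(q;q)_{\infty}$ normalization through the Bailey limit and confirming the mod-$6$ residue pattern of the surviving factors; the sum side, by contrast, is immediate.
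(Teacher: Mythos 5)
Your proposal is correct and is essentially the paper's own treatment: the paper gives no independent proof of this statement, quoting it directly as the $k=2$, $i=2,1$ specialization of \cite[Theorem 5.4]{ASW}, with the product sides rewritten as $(q;q)_{\infty}^{-1}$ times the level $3$ principal characters of $\mathfrak{g}(A^{(1)}_{2})$. Your outline of collapsing the multisum at $k=2$ and matching the ASW products with those characters is exactly that specialization.
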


We show manifestly positive, Andrews-Gordon type series (in the sense of ~\cite{TT2})
for $\BIR$ and $\BIRP$ as follows.
Here, the length $\ell(\lambda)$ of a 2-colored partition $\lambda$ is defined to be the number of parts.
For the size $|\lambda|$ of $\lambda$, see \S\ref{mainse}.

\begin{Thm}\label{RRidentAG}
\begin{align*}
\sum_{\lambda\in\BIR}x^{\ell(\lambda)}q^{|\lambda|}
&=
\sum_{a,b,c,d\geq 0}\frac{q^{a^2+b^2+3c^2+3d^2+2ab+3ac+3ad+3bc+3bd+6cd}x^{a+b+2c+2d}}{(q;q)_a(q;q)_b(q^3;q^3)_c(q^3;q^3)_d},\\
\sum_{\lambda\in\BIRP}x^{\ell(\lambda)}q^{|\lambda|}
&=
\sum_{a,b,c,d\geq 0}\frac{q^{a(a+1)+b(b+2)+3c(c+1)+3d(d+1)+2ab+3ac+3ad+3bc+3bd+6cd}x^{a+b+2c+2d}}{(q;q)_a(q;q)_b(q^3;q^3)_c(q^3;q^3)_d}.
\end{align*}
\end{Thm}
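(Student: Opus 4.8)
The plan is to prove both identities of Theorem~\ref{RRidentAG} by exhibiting, for each side, the same finite system of $q$-difference equations, using the finite-automaton partition-ideal framework of~\cite{TT2} for the combinatorial left-hand side and a $q$-analogue of Sister Celine's creative telescoping to certify the multisum on the right. I treat $\BIR$ in detail; the case of $\BIRP$ is identical once the extra constraint (D4) is imposed, and the additional linear terms $a+2b+3c+3d$ appearing in the $\BIRP$ exponent reflect exactly the raised ground state forced by forbidding $1,\cint{1},\cint{2}$ as parts.

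First I would encode $\BIR$ as the language accepted by a finite automaton $\mathcal{A}$ that reads a $2$-colored partition from its smallest to its largest part in the order \eqref{orde}, appending parts at the large end. The conditions (D1)--(D3) are all local: (D1) and (D2) constrain a single consecutive pair through $\CONT(\lambda_a)-\CONT(\lambda_{a+1})$, the residue of $\CONT(\lambda_a)+\CONT(\lambda_{a+1})$ modulo $3$, and the two colors, while the forbidden patterns in (D3) have length at most $4$ and a shape invariant under $\CONT\mapsto\CONT+3$. Hence a state of $\mathcal{A}$ need only remember the colors of the most recent parts together with their contents \emph{modulo $3$} and the few consecutive content-differences required to recognize the length-$3$ and length-$4$ patterns; this is finite data uniformly in $k$, so $\mathcal{A}$ has finitely many states. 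Attaching to each transition that reads a new part $\mu$ the weight $x\,q^{\CONT(\mu)}$, the transfer structure of $\mathcal{A}$ together with this grading yields a finite linear system of $q$-difference equations for the generating functions $G_s(x,q)$ attached to the states $s$, of which $\sum_{\lambda\in\BIR}x^{\ell(\lambda)}q^{|\lambda|}$ is the component at the initial state.

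Next I would read the right-hand side as the generating function of that same automaton, organized by its fundamental moves. Writing the quadratic form as $a^2+b^2+3c^2+3d^2+2ab+3ac+3ad+3bc+3bd+6cd=(a{+}b)^2+3(a{+}b)(c{+}d)+3(c{+}d)^2$ and the monomial as $x^{a+b+2c+2d}$, the four summation variables count four basic insertions: two single-part moves, accounting for the factors $(q;q)_a^{-1}$ and $(q;q)_b^{-1}$ and each contributing $1$ to the length, and two paired-block moves supported on contents in $3\mathbb{Z}$, accounting for $(q^3;q^3)_c^{-1}$ and $(q^3;q^3)_d^{-1}$ and each contributing $2$. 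The exponent $(a{+}b)^2+3(a{+}b)(c{+}d)+3(c{+}d)^2$ is then the minimal size obtained by stacking these blocks in the densest $\BIR$-admissible configuration, and each Pochhammer factor is the familiar Andrews-type raising degree of freedom that lets the blocks of a fixed type slide apart independently. Substituting $x\mapsto xq$ in the summand produces explicit shift relations among the terms, so the multisum manifestly satisfies a $q$-difference system of the same shape as the one coming from $\mathcal{A}$, and I would match the two by producing a single homogeneous linear $q$-recurrence, with coefficients polynomial in $x$ and $q$, annihilating both series and then checking agreement on the finitely many initial coefficients it forces (the first terms are visible in the Example following Theorem~\ref{RRbiiden}).

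The main obstacle is the first step for (D3). Because the forbidden triples $(3k,\cint{3k},\cint{3k-2})$, $(3k+2,3k,\cint{3k})$ and the quadruple $(\cint{3k+2},3k+1,3k-1,\cint{3k-2})$ interact with the pair conditions (D1)--(D2) across overlapping windows, the bookkeeping needed to guarantee that $\mathcal{A}$ neither accepts a partition containing a forbidden pattern nor rejects an admissible one---while keeping the state set genuinely finite and the transfer matrix in the precise form whose solution is the claimed four-fold sum---is delicate. It is here that one must verify that the four insertion types exhaust all admissible configurations without overcounting, and that the boundary parts excluded by (D4) are accounted for by the linear shift rather than by any change in the interaction matrix; certifying this exhaustiveness is what reduces the theorem to the purely mechanical $q$-Sister-Celine verification.
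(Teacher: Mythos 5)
Your plan follows essentially the same route as the paper's proof: the combinatorial sides are turned into $q$-difference equations by the finite-automaton machinery of \cite{TT} (the paper's \S\ref{auto}, Propositions \ref{iikae} and \ref{PropqdiffR}, where the parts are read in windows of three consecutive contents so that the substitution $x\mapsto xq^3$ keeps the system finite, and the Murray--Miller algorithm uncouples it into a single scalar equation), the multisums are certified by explicit $q$-Sister Celine recurrences (\S\ref{proof}, \S\ref{proofp}), and the two recurrences are matched and checked on finitely many initial coefficients. Two minor glosses in your write-up---the per-part transition weight $xq^{\CONT(\mu)}$ must be reorganized into such bounded windows to keep the transition data finite, and the multisum does not ``manifestly'' satisfy the same system (producing the telescoping certificates is the heaviest computation in the paper)---are both resolved exactly by the steps you already name.
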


The result is simiar to the fact that, for $i=1,2$, we have
\begin{align*}
\sum_{\lambda\in\RR}x^{\ell(\lambda)}q^{|\lambda|}=\sum_{n\geq 0}\frac{q^{n(n+i-1)}x^n}{(q;q)_n}.
\end{align*}

Recently, Kanade-Russell showed (see ~\cite[(1.8)]{KR3})
\begin{align*}
\sum_{s,t\geq 0}\frac{q^{s^2-st+t^2+t}}{(q;q)_{s+t+1}(q;q)_{s+t}}{s+t \brack s}_{q^3}
= \frac{1}{(q;q)_{\infty}}\frac{1}{(q,q^2;q^3)_{\infty}},
\end{align*}
where $(q,q^2;q^3)_{\infty}^{-1}=\chi_{A^{(1)}_2}(2\Lambda_0+\Lambda_1)$.
Although it can be similarly proven  
\begin{align*}
\sum_{a,b,c,d\geq 0}\frac{q^{a^2+b(b+1)+3c^2+c+3d^2+2d+2ab+3ac+3ad+3bc+3bd+6cd}}{(q;q)_a(q;q)_b(q^3;q^3)_c(q^3;q^3)_d}
= \frac{1}{(q,q^2;q^3)_{\infty}},
\end{align*}
we do not consider the level 3 module $V(2\Lambda_0+\Lambda_1)$ in this paper.
A reason is that $V(2\Lambda_0+\Lambda_1+p\delta)$ for $p\in\mathbb{Z}$
is not a submodule of $V(\Lambda_0)^{\otimes 3}$ as in Remark \ref{notsub}.

After the groundbreaking paper ~\cite{ASW}, 
a vast literature is devoted to the study of $A_2$ Rogers-Ramanujan identities 
(see ~\cite{CDA,CW,FFW,FW,KR3,Unc,War2,War,War3}), especially to the search for 
manifestly positive infinite sums such as ~\cite[Theorem 5.2]{ASW}, ~\cite[Theorem 1.1]{CW}
for level 4 and ~\cite[Theorem 1.6]{CDA} for level 5.

After a success of vertex operator theoretic proofs of the Rogers-Ramanujan identities such as ~\cite{LW3,MP},
it has been expected that, for an affine GCM $X^{(r)}_N$ and a dominant integral weight $\Lambda\in\MP$, 
there should exist a Rogers-Ramanujan type identity whose infinite product is given by $\chi_A(V(\Lambda))$. 
It is natural to expect that the sum side is related to
the $n(X^{(r)}_N)$-colored partitions, where
\begin{align*}
  n(X^{(r)}_N)=\frac{\textrm{the number of roots of type $X_N$}}{\textrm{the $r$-twisted Coxeter number of $X_N$}}
=\textrm{the size of $X^{(r)}_N$}.
\end{align*}
Concerning the fact $n(A^{(1)}_{r})=r$, 
it is expected that the
$A_2$ Rogers-Ramanujan identities are related to the 2-colored partitions (and thus to the bipartitions).
It would be interesting if
the results in this paper are generalized to higher levels. 

\hspace{0mm}

\noindent{\bf Organization of the paper.} 
The paper is organized as follows.
In \S\ref{vertset}, we recall the principal realization of $\widehat{\mathfrak{sl}_3}=\GEE(A^{(1)}_2)$ and the vertex operator realization of 
the basic module following ~\cite{LW3}.
In \S\ref{maincomp} and \S\ref{cal}, we show that the definiting conditions for $\BIR$ and $\BIRP$
are naturally deduced by calculations (similar to ~\cite{Cap0,MP,Nan}) of the vertex operators on the triple tensor product of the basic module.
In \S\ref{auto}, we show that $q$-difference equations for $\BIR$ and $\BIRP$ are automatically
derived by the technique developed in ~\cite{TT} as a generalization of Andrews' 
linked partition ideals~\cite[Chapter 8]{An1} using finite automata.
In \S\ref{ags}, we briefly review
certificate recurrences, which are obtained by
a $q$-version of Sister Celine's technique~\cite{Koe,Rie,WZ} for a $q$-proper hypergeometric term.
It
gives automatically a $q$-difference equation for an Andrews-Gordon
type series and thus a proof of Theorem \ref{RRidentAG}.
These and corresponding results for the cylindric partitions (see \S\ref{gs111})
give a proof of Theorem \ref{RRbiiden} (see \S\ref{finalsec}) by combining the 
standard results, such as
the Corteel-Welsh recursion~\cite{CW} and the Borodin product formula~\cite{Bor},
which are reviewed in \S\ref{cylin}.



\section{The vertex operators}\label{vertset}
In this section, we fix $m=3$ and $\omega=\exp(2\pi\sqrt{-1}/m)$.
As usual, the affine Cartan matrix
\begin{align}
A^{(1)}_2=
\begin{pmatrix}
2  & -1 & -1 \\
-1 &  2 & -1 \\
-1 & -1 & 2
\end{pmatrix}
\label{A12GCM}
\end{align}
is indexed by the set $I=\{0,1,2\}$. Note that $A_2=(A^{(1)}_2)|_{I_0\times I_0}$, where $I_0=\{1,2\}$.

\subsection{The principal realization of the affine Lie algebra $\GEE(A^{(1)}_2)$}\label{prafflie}
We regard 
\begin{align*}
\mathfrak{sl}_3=\{M\in\MAT_3(\mathbb{C})\mid\TR M=0\}
\end{align*}
with the Cartan-Killing form $\langle M_1,M_2\rangle=\TR(M_1M_2)$ 
as the Kac-Moody Lie algebra $\GEE(A_2)$ 
with the Chevalley generators $e_1=E_{12}$, $e_2=E_{23}$, $f_1=E_{21}$, $f_2=E_{32}$, $h_1=E_{11}-E_{22}$ and $h_2=E_{22}-E_{33}$,
where $E_{ij}$ is the $3\times 3$ matrix unit for $1\leq i,j\leq 3$.

Take the principal automorphism
\begin{align*}
\nu:\mathfrak{sl}_3\to\mathfrak{sl}_3,\quad
E_{ij}\mapsto \omega^{j-i}E_{ij}.
\end{align*}
Note that $\langle,\rangle$ is $\nu$-invariant, i.e., $\langle\nu M_1,\nu M_2\rangle=\langle M_1,M_2\rangle$ for $M_1,M_2\in\mathfrak{sl}_3$.
The $\nu$-twisted affinization is given by
\begin{align*}
\GE &= 
(\SPAN\{E_{11}-E_{22},E_{22}-E_{33}\}\otimes\mathbb{C}[t^3,t^{-3}])
\oplus(\SPAN\{E_{12},E_{23},E_{31}\}\otimes t\mathbb{C}[t^3,t^{-3}])\\
&\quad\quad
\oplus(\SPAN\{E_{13},E_{21},E_{32}\}\otimes t^2\mathbb{C}[t^3,t^{-3}])
\oplus\mathbb{C}c\oplus\mathbb{C}d,
\end{align*}
with the Lie algebra structure 
\begin{align*}
[M\otimes t^n,M'\otimes t^{n'}]=[M,M']\otimes t^{n+n'}+\frac{n \langle M,M'\rangle}{3}\delta_{n+n',0}c, \quad
[d,M\otimes t^n]=nM\otimes t^n,
\end{align*}
where $M,M'\in\mathfrak{sl}_3$, $n,n'\in\mathbb{Z}$, and $c$ is central (see ~\cite[(2.17)]{LW3}).

The principal realization is the Lie algebra isomorphism $\GEE(A^{(1)}_2)\isom \GE$ given by
\begin{align*}
\begin{array}{lll}
e_0\mapsto E_{31}\otimes t, &
e_1\mapsto E_{12}\otimes t, &
e_2\mapsto E_{23}\otimes t,\\
f_0\mapsto E_{13}\otimes t^{-1}, &
f_1\mapsto E_{21}\otimes t^{-1}, &
f_2\mapsto E_{32}\otimes t^{-1},\\
h_0\mapsto (E_{33}-E_{11})\otimes 1+\frac{c}{3}, &
h_1\mapsto (E_{11}-E_{22})\otimes 1+\frac{c}{3}, &
h_2\mapsto (E_{22}-E_{33})\otimes 1+\frac{c}{3},
\end{array}
\end{align*}
in addition to $d\mapsto d$.
The principal degeree on $\GEE(A^{(1)}_2)$ (and thus on $\GE$) is defined by 
$\DEG e_i=1=-\DEG f_i$ and $\DEG h_i=0=\DEG d$ for $i\in I$. See also ~\cite[\S7,\S8]{Kac}.

\subsection{The principal Cartan subalgebra}

Let $E_{(3)}=E_{12}+E_{23}+E_{31}$ be the 
principal cyclic element due to Kostant~\cite{Kos}. 
Take the principal Cartan subalgebra
\begin{align*}
\HPRIN = \{M\in \mathfrak{sl}_3\mid [M,E_{(3)}]=O\}=\mathbb{C}E_{(3)}\oplus\mathbb{C}E^2_{(3)}.
\end{align*}

We have a root space decomposition (see ~\cite[Corollary 26.2.7]{DBT})
\begin{align}
  \mathfrak{sl}_3 = \HPRIN \oplus \bigoplus_{1\leq s\ne t\leq 3}\mathbb{C}A_{st},\quad
  \textrm{where}\quad
  A_{st}=\frac{1}{3}\sum_{1\leq i,j\leq 3}\omega^{jt-is}E_{ij}.
\label{rootdec}
\end{align}
By ~\cite[Lemma 26.2.4]{DBT}, for $1\leq s,t\leq 3$, we have 
\begin{align*}
  [h,A_{st}] = \langle h,B_{st} \rangle A_{st},\quad
  \textrm{where}\quad
  B_{st}=\frac{1}{3}\sum_{k=1}^{2}(\omega^{-ks}-\omega^{-kt})E_{(3)}^{3-k}.
\end{align*}

In the rest of the paper, we put
\begin{align}
\begin{split}
  \alpha_1
  &= B_{12}
  = \frac{\omega-\omega^2}{3}E_{(3)}+\frac{\omega^2-\omega}{3}E_{(3)}^2,\\
  \alpha_2 &= B_{23}=\frac{\omega^2-1}{3}E_{(3)}+\frac{\omega-1}{3}E_{(3)}^2,\\ 
  \Phi &= \{\alpha_1,\alpha_2,\alpha_1+\alpha_2,-\alpha_1,-\alpha_2,-(\alpha_1+\alpha_2)\}(\subseteq\HPRIN),\\
  x_{\alpha_1} &= A_{12}=\begin{footnotesize}
  \frac{1}{3}
  \begin{pmatrix}
    \omega & 1 & \omega^2 \\
    1 & \omega^2 & \omega \\
    \omega^2 & \omega & 1
  \end{pmatrix}\end{footnotesize},\quad
  x_{-\alpha_1} = A_{21}=\begin{footnotesize}
  \frac{1}{3}
  \begin{pmatrix}
    \omega^2 & 1 & \omega \\
    1 & \omega & \omega^2 \\
    \omega & \omega^2 & 1
  \end{pmatrix}\end{footnotesize},\\
  x_{\alpha_2} &= A_{23}=\begin{footnotesize}
  \frac{1}{3}
  \begin{pmatrix}
    \omega & \omega & \omega \\
    \omega^2 & \omega^2 & \omega^2 \\
    1 & 1 & 1
  \end{pmatrix}\end{footnotesize},\quad
  x_{-\alpha_2} = A_{32} =\begin{footnotesize}
  \frac{1}{3}
  \begin{pmatrix}
    \omega^2 & \omega & 1 \\
    \omega^2 & \omega & 1 \\
    \omega^2 & \omega & 1
  \end{pmatrix}\end{footnotesize},\\
  x_{\alpha_1+\alpha_2} &= A_{13}=\begin{footnotesize}
  \frac{1}{3}
  \begin{pmatrix}
    \omega^2 & \omega^2 & \omega^2 \\
    \omega & \omega & \omega \\
    1 & 1 & 1
  \end{pmatrix}\end{footnotesize},\quad
  x_{-(\alpha_1+\alpha_2)} = A_{31}  
  =\begin{footnotesize}
  \frac{1}{3}
  \begin{pmatrix}
    \omega & \omega^2 & 1 \\
    \omega & \omega^2 & 1 \\
    \omega & \omega^2 & 1
  \end{pmatrix}\end{footnotesize}.
\end{split}
\label{rootde2}
\end{align}
Note that we have $\nu(\alpha_1)=\alpha_2$, $\nu(\alpha_2)=-(\alpha_1+\alpha_2)$.
Note also that we have $\nu(x_{\beta})=x_{\nu\beta}$ for $\beta\in\Phi$, which is easily seen by ~\cite[(26.2.17),(26.3.4),(26.3.5)]{DBT}.

\subsection{The principal Heisenberg subalgebra}
For $M\in\mathfrak{sl}_3$ and $n\in\mathbb{Z}$, we put
\begin{align}
M(n)=\pi_{(n)}(M)\otimes t^n\in\GE,
\label{emurecall}
\end{align}
where $\pi_{(n)}:\mathfrak{sl}_3\to\{M\in\mathfrak{sl}_3\mid\nu(M)=\omega^nM\}$ is the projection.


Let $\HPRIN_{(n)}=\pi_{(n)}(\HPRIN)$. Then, $\HPRIN_{(n)}=\{0\}$ when $n\in 3\mathbb{Z}$. For $n\in\mathbb{Z}\setminus 3\mathbb{Z}$,  
$B(n)=E_{(3)}^n\otimes t^n$
is a basis of the 1-dimensional space $\HPRIN_{(n)}$, where $B=E_{(3)}+E^2_{(3)}$.
Note that we have $[B(n),B(n')]=nc\delta_{n+n',0}$ for $n,n'\in\mathbb{Z}\setminus 3\mathbb{Z}$.

\begin{Rem}\label{sl3basis}
It is not difficult to see (e.g., by $\pi_{(n)}(\nu x_{\beta})=\omega^n\pi_{(n)}(x_{\beta})$) that 
\begin{align*}
\{
B(n),x_{\alpha_1}(n'),x_{-\alpha_1}(n'),c,d
\mid n\in\mathbb{Z}\setminus3\mathbb{Z},n'\in\mathbb{Z}
\}
\end{align*}
forms a basis of $\GE$ with $\DEG B(n)=n$ and $\DEG x_{\pm\alpha_1}(n')=n'$ (see \eqref{rootdec} and \eqref{rootde2}).
\end{Rem}

Let $\GAAA=[\GAA,\GAA]= \GAAA_{+}\oplus\GAAA_{-}\oplus\mathbb{C}c$ be the principal Heisenberg subalgebra
of $\GE$, where
\begin{align*}
\GAA = \GAAA_{+}\oplus\GAAA_{-}\oplus\mathbb{C}c\oplus\mathbb{C}d,\quad
\GAAA_{\pm}=\bigoplus_{\pm n>0}\HPPRIN_{(n)}\otimes t^n.
\end{align*}

\begin{Rem}[{\cite[Proposition 5.4.(1)]{LW3}}]\label{indv}
The induced $\GAA$-module 
\begin{align*}
V=\IND_{\GAAA_{+}\oplus\mathbb{C}c\oplus\mathbb{C}d}^{\GAA}\mathbb{C}
\cong U(\GAAA_-)=\mathbb{C}[B(n)\mid n\in\mathbb{Z}_{<0}\setminus3\mathbb{Z}].
\end{align*}
is irreducible (even as an $\GAAA$-module), where $\GAAA_{+}\oplus\mathbb{C}d$ (resp. $c$) acts trivially (resp. as 1) on $\mathbb{C}$.
\end{Rem}


In this paper, for a vector space $U$, 
we denote by $U\{\zeta\}$ (resp. $U\{\zeta_1,\zeta_2\}$) the set of formal power series $\sum_{n\in\mathbb{Z}}u_n\zeta^n$ 
(resp. $\sum_{n,n'\in\mathbb{Z}}u_{n,n'}\zeta_1^n\zeta_2^{n'}$),
where $u_n\in U$ (resp. $u_{n,n'}\in U$)~\cite[\S2]{LW3}. 
For $M\in\mathfrak{sl}_3$, we put (see \eqref{emurecall})
\begin{align*}
M(\zeta)=\sum_{n\in\mathbb{Z}}M(n)\zeta^n\in\GE\{\zeta\}.
\end{align*}

\begin{Rem}\label{koukan}
By ~\cite[(2.44), Theorem 2.4]{LW3},
the commutation relations are given in terms of the generating functions in $\GE\{\zeta_1,\zeta_2\}$
and $\GE\{\zeta\}$ as follows. 
\begin{align*}
[x_{\alpha}(\zeta_1),x_{\beta}(\zeta_2)]
&=
\frac{1}{m}\sum_{s\in C_{-1}} \varepsilon(\nu^s\alpha,\beta)x_{\nu^s\alpha+\beta}(\zeta_2)
\delta(\omega^{-s}\zeta_1/\zeta_2)\\
&\quad +
\frac{\langle x_{\beta},x_{-\beta}\rangle}{m^2}
\sum_{s\in C_{-2}} 
\left(cD\delta(\omega^{-s}\zeta_1/\zeta_2)-m\beta(\zeta_2)\delta(\omega^{-s}\zeta_1/\zeta_2)\right),\\
[\gamma(\zeta_1),x_{\beta}(\zeta_2)] 
&=
\frac{1}{m}\sum_{s\in I}\langle\nu^s\alpha,\beta\rangle
x_{\beta}(\zeta_2)\delta(\omega^{-s}\zeta_1/\zeta_2),\\
x_{\nu^p\alpha}(\zeta) &= x_{\alpha}(\omega^p\zeta),
\end{align*}
where $\delta(\zeta)=\sum_{n\in\mathbb{Z}}\zeta^n$, $D\delta(\zeta)=\sum_{n\in\mathbb{Z}}n\zeta^n$, $\alpha,\beta\in\Phi$,  $\gamma\in\HPPRIN$, $p\in I$,
$C_k=\{s\in I\mid \langle \nu^s\alpha,\beta\rangle=k\}$, and
$\varepsilon(\alpha',\beta')\in\mathbb{C}^{\times}$
is defined by $[x_{\alpha'},x_{\beta'}]=\varepsilon(\alpha',\beta')x_{\alpha'+\beta'}$
if $\alpha',\beta',\alpha'+\beta'\in\Phi$ (see ~\cite[(2.34)]{LW3}).
\end{Rem}

For $s\in\mathbb{Z}$ and a Lie subalgebra $\mathfrak{r}\subseteq \GE$, we denote by $U(\mathfrak{r})_{s}$ the subspace of principal degree $s$ elements in $U(\mathfrak{r})$.
For integers $\ell\geq 0$ and $b$, we define a subspace 
\begin{align*}
  \Theta_{\ell,b} = \sum_{\substack{0\leq\ell'\leq\ell, \beta_1,\dots,\beta_{\ell'}\in\{\pm\alpha_1\}, \\ s,t,m_1,\dots,m_{\ell'}\in\mathbb{Z}, s+m_1+\dots+m_{\ell'}+t=b}}
U(\GAAA_-)_{s}
x_{\beta_1}(m_1)\dots x_{\beta_{\ell'}}(m_{\ell'})
U(\GAAA_+\oplus\mathbb{C}c\oplus\mathbb{C}d)_{t}
\end{align*}
of the universal enveloping algebra $U(\GE)=\bigcup_{\ell\geq 0,b\in\mathbb{Z}}\Theta_{\ell,b}$. Note that $\Theta_{0,b}=U(\GAA)_{b}$. We also define $\Theta_{-1,b}=\{0\}$ for convenience.

\begin{Rem}[{\cite[\S2]{MP}}]\label{sortresRM}
By Remark \ref{koukan}, for $\beta\in\Phi$, $b,m\in\mathbb{Z}$, $\ell\geq -1$, we have
\begin{align}
  x_{\beta}(m)\Theta_{\ell,b}\subseteq
  \Theta_{\ell+1,b+m},\quad
  \Theta_{\ell,b}x_{\beta}(m)\subseteq
  \Theta_{\ell+1,b+m}
  \label{thetaideal}
\end{align}
and $U(\GAA)_m\Theta_{\ell,b}\subseteq\Theta_{\ell,b+m}$, $\Theta_{\ell,b}U(\GAA)_m\subseteq\Theta_{\ell,b+m}$.
It is also easy to see that we have 
\begin{align}
x_{\beta_1}(m_1)\cdots x_{\beta_{\ell}}(m_{\ell})-x_{\beta_{p(1)}}(m_{p(1)})\cdots x_{\beta_{p(\ell)}}(m_{p(\ell)})\in \Theta_{\ell-1,m_1+\dots+m_{\ell}}
\label{sortres}
\end{align}
for $\ell\geq 0$, a permutation $p\in\mathfrak{S}_{\ell}$ and $\beta_1,\dots,\beta_{\ell}\in\{\pm\alpha_1\}$, $m_1,\dots,m_{\ell}\in\mathbb{Z}$ (see ~\cite[(2.11)]{MP}). Thus, we have
\begin{align*}
  \Theta_{\ell,b} = \sum_{\substack{0\leq\ell'\leq\ell, \beta_1,\dots,\beta_{\ell'}\in\{\pm\alpha_1\}, \\ s,t,m_1,\dots,m_{\ell'}\in\mathbb{Z}, s+m_1+\dots+m_{\ell'}+t=b,\\ m_1\leq\dots\leq m_{\ell'} \\ \textrm{$m_i=m_j,\beta_i\ne\beta_j$ implies $\beta_i=\alpha_1, \beta_j=-\alpha_1$ for $i<j$}}}
U(\GAAA_-)_{s}
x_{\beta_1}(m_1)\dots x_{\beta_{\ell'}}(m_{\ell'})
U(\GAAA_+\oplus\mathbb{C}c\oplus\mathbb{C}d)_{t}
\end{align*}
\end{Rem}

\subsection{The principal realization of the basic module}\label{vor}
In virtue of ~\cite{KKLW} (see also \cite[Theorem 8.7]{LW3}),
the assignment $x_{\beta}(n')\mapsto X(\beta,n')$ by
\begin{align}
\begin{split}
X(\beta,\zeta) &= \sum_{n'\in\mathbb{Z}}X(\beta,n')\zeta^{n'}  = \Lambda_0(\pi_{(0)}(x_{\beta})) E^-(-\beta,\zeta)E^+(-\beta,\zeta), \\
E^{\pm}(\beta,\zeta) &= \sum_{\pm n\geq 0}E^{\pm}(\beta,n)\zeta^n
=\exp\left(m\sum_{\pm j>0}\frac{\beta(j)}{j}\zeta^j\right),
\end{split}
\label{vvoorr}
\end{align}
where $\beta\in\Phi$ and $n'\in\mathbb{Z}$, 
in addition to the $\GAA$-module structure (see Remark \ref{indv})
identifies $V$ with the basic $\GE$-module $V(\Lambda_0)$ under the isomorphism $\GEE(A^{(1)}_2)\cong \GE$. 
By \eqref{rootde2}, it is not difficult to see (cf. ~\cite[p.104]{KKLW}, where $\varepsilon=\omega$ and $\varepsilon^i/(\varepsilon^i-1)=(1-\omega^i)/3$)
\begin{align}
\begin{split}
\Lambda_0(\pi_{(0)}(x_{\alpha_1})) = 
\Lambda_0(\pi_{(0)}(x_{\alpha_2})) = 
\Lambda_0(\pi_{(0)}(x_{-(\alpha_1+\alpha_2)})) = \frac{1-\omega}{9},\\
\Lambda_0(\pi_{(0)}(x_{-\alpha_1})) = 
\Lambda_0(\pi_{(0)}(x_{-\alpha_2})) = 
\Lambda_0(\pi_{(0)}(x_{\alpha_1+\alpha_2})) = \frac{1-\omega^2}{9}.
\end{split}
\label{explicitcvalue}
\end{align}


\begin{Lem}[{\cite[Proposition 3.5, Proposition 3.6]{LW3}}]
For $\alpha,\beta\in\Phi$, we have
\begin{align*}
X(\alpha,\zeta_1)E^-(\beta,\zeta_2) &=
E^-(\beta,\zeta_2)X(\alpha,\zeta_1)\myphi_{\alpha,\beta}(\zeta_1/\zeta_2),\\
E^+(\alpha,\zeta_1)X(\beta,\zeta_2) &=
X(\beta,\zeta_2)E^+(\alpha,\zeta_1)\myphi_{\alpha,\beta}(\zeta_1/\zeta_2),
\end{align*}
in $(\END V)\{\zeta_1,\zeta_2\}$, where
\begin{align*}
\myphi_{\alpha,\beta}(x)=\prod_{p\in I}(1-\omega^{-p}x)^{-\langle\nu^p\alpha,\beta\rangle}.
\end{align*}
\end{Lem}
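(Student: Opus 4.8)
The plan is to reduce both identities to a single Baker--Campbell--Hausdorff computation inside the principal Heisenberg subalgebra $\GAAA$. By (V2) we have $X(\alpha,\zeta_1)=\frac{1}{m}E^-(-\alpha,\zeta_1)E^+(-\alpha,\zeta_1)$, and each $E^{\pm}(\gamma,\zeta)$ is the exponential of a formal sum of Heisenberg generators $\gamma(j)$ with $j$ of a single sign. First I would record that two such exponentials built from modes of the same sign commute: from the relation $[M\otimes t^{n},M'\otimes t^{n'}]=\frac{n\langle M,M'\rangle}{m}\delta_{n,-n'}c$ (using $[\HPRIN,\HPRIN]=0$ and $c=1$ on $V$) one gets $[\gamma(j),\delta(k)]=\frac{j}{m}\langle\pi_{(j)}\gamma,\pi_{(-j)}\delta\rangle\,\delta_{j+k,0}$, which vanishes unless $j$ and $k$ have opposite signs. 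Hence $E^-(-\alpha,\zeta_1)$ commutes with $E^-(\beta,\zeta_2)$, and the only nontrivial interaction in $X(\alpha,\zeta_1)E^-(\beta,\zeta_2)$ is the one between $E^+(-\alpha,\zeta_1)$ and $E^-(\beta,\zeta_2)$.

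For that interaction I would apply $e^{A}e^{B}=e^{B}e^{A}e^{[A,B]}$, legitimate because $[A,B]$ is a multiple of the central element $c$ and hence commutes with everything. Writing $A$ and $B$ for the exponents of $E^+(-\alpha,\zeta_1)$ and $E^-(\beta,\zeta_2)$, only the resonant terms $k=-j$ survive, giving $[A,B]=m\sum_{j>0}\frac{(\zeta_1/\zeta_2)^{j}}{j}\langle\pi_{(j)}\alpha,\pi_{(-j)}\beta\rangle$. The crux is to evaluate this inner product. Using $\tau|_{\HPRIN}\leftrightarrow\nu$ and the eigenprojection $\pi_{(j)}=\frac{1}{m}\sum_{p\in I}\omega^{-jp}\nu^{p}$ together with the $\nu$-invariance of $\langle,\rangle$, the double sum over $I\times I$ collapses to $\langle\pi_{(j)}\alpha,\pi_{(-j)}\beta\rangle=\frac{1}{m}\sum_{s\in I}\omega^{-js}\langle\nu^{s}\alpha,\beta\rangle$. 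Substituting and summing over $j$ by $\sum_{j>0}x^{j}/j=-\log(1-x)$ then yields $e^{[A,B]}=\prod_{s\in I}(1-\omega^{-s}\zeta_1/\zeta_2)^{-\langle\nu^{s}\alpha,\beta\rangle}=\myphi_{\alpha,\beta}(\zeta_1/\zeta_2)$. Reassembling the two halves of $X(\alpha,\zeta_1)$ and pulling the resulting scalar out past the commuting factor $E^-(-\alpha,\zeta_1)$ gives the first identity.

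The second identity is proved by the same computation with the roles of the two exponential families exchanged: one writes $X(\beta,\zeta_2)=\frac{1}{m}E^-(-\beta,\zeta_2)E^+(-\beta,\zeta_2)$, notes that $E^+(\alpha,\zeta_1)$ commutes with $E^+(-\beta,\zeta_2)$, and moves $E^+(\alpha,\zeta_1)$ to the right past $E^-(-\beta,\zeta_2)$; the relevant commutator is literally the same $[A,B]$ computed above, so the same factor $\myphi_{\alpha,\beta}(\zeta_1/\zeta_2)$ appears. I expect the only delicate points to be bookkeeping ones: confirming that $[A,B]$ is genuinely central so that the simple exponential identity applies, carrying the projection and isometry manipulation through cleanly, and fixing the expansion region (the product $\myphi_{\alpha,\beta}$ must be expanded as a power series in $\zeta_1/\zeta_2$) so that the formal identity of operator-valued series is unambiguous.
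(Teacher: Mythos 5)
Your proof is correct: the reduction to a single central-commutator computation, the collapse $\langle\pi_{(j)}\alpha,\pi_{(-j)}\beta\rangle=\frac{1}{m}\sum_{s\in I}\omega^{-js}\langle\nu^{s}\alpha,\beta\rangle$ via $\nu$-invariance, and the logarithm summation producing $\myphi_{\alpha,\beta}$ all check out, and the observation that the second identity uses literally the same commutator is right. The paper itself gives no proof, quoting the lemma from Lepowsky--Wilson; your argument is exactly the standard Heisenberg/Baker--Campbell--Hausdorff computation underlying the cited Propositions 3.5 and 3.6 of that reference, so there is nothing to add beyond the formal-series bookkeeping you already flagged.
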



\begin{Ex}\label{exppoly}
The following explicit values will be used in \S\ref{cal}.
\begin{align*}
\myphi_{\alpha_1,\alpha_1}(x) &= \myphi_{-\alpha_1,-\alpha_1}(x)=\frac{1-x^3}{(1-x)^3}=1+\sum_{k\geq 1}3kx^k,\\
\myphi_{-\alpha_1,\alpha_1}(x) &= \myphi_{\alpha_1,-\alpha_1}(x)=\frac{(1-x)^3}{1-x^3}=1+\sum_{k\geq 1}3(x^{3k-1}-x^{3k-2}).
\end{align*}
\end{Ex}

\begin{Cor}[{\cite[Corollary 2.2.12]{Nan}}]\label{idounan}
For $\alpha,\beta\in\Phi$, let $\myphi_{\alpha,\beta}(x)=\sum_{k\geq 0}c_kx^k$. For $n,n'\in\mathbb{Z}$, we have
\begin{align*}
X(\alpha,n)E^-(\beta,n') &= \sum_{k\geq 0}c_kE^{-}(\beta,n'+k)X(\alpha,n-k),\\
E^+(\alpha,n)X(\beta,n') &= \sum_{k\geq 0}c_kX(\beta,n'+k)E^+(\alpha,n-k),
\end{align*}
where $E^-(\beta,n')=0$ (resp. $E^+(\alpha,n)=0$) when $n'>0$ (resp. $n<0$).
More generally, 
for $n_1,\dots,n_{\ell},n'_1,\dots,n'_{\ell}\in\mathbb{Z}$ and $\alpha_1,\dots,\alpha_{\ell},\beta_1,\dots,\beta_{\ell}\in\Phi$, we have
\begin{align*}
{} &{} X(\alpha_1,n_1)\cdots X(\alpha_{\ell},n_{\ell})E^-(\beta,n') \\
&= 
\sum_{j_1,\cdots,j_{\ell}\geq 0}c_{j_1}\cdots c_{j_{\ell}}E^{-}(\beta,n'+j_1+\dots+j_{\ell})X(\alpha_1,n-j_1)\cdots X(\alpha_{\ell},n_{\ell}-j_{\ell}),\\
{} &{} E^+(\alpha,n)X(\beta_1,n'_1)\cdots X(\beta_{\ell},n'_{\ell}) \\
&= 
\sum_{j_1,\cdots,j_{\ell}\geq 0}c_{j_1}\cdots c_{j_{\ell'}}X(\beta_1,n'+j_1)\cdots X(\beta_{\ell},n'+{j_{\ell}}) E^+(\alpha,n-j_1-\cdots-j_{\ell}).
\end{align*}
\end{Cor}

\subsection{The triple tensor product of the basic module}\label{tripletensor}
For $\alpha,\beta\in\Phi$, let 
\begin{align*}
P_{\alpha,\beta}(x)=\prod_{p\in I, \langle\nu^p\alpha,\beta\rangle<0}(1-\omega^{-p}x)^{-\langle\nu^p\alpha,\beta\rangle}.
\end{align*}

\begin{Ex}\label{exppoly2}
The following explicit values will be used in \S\ref{cal}.
\begin{align*}
P_{\alpha_1,\alpha_1}(x) &= P_{-\alpha_1,-\alpha_1}(x)=1+x+x^2,\\
P_{\alpha_2,-\alpha_1}(x) &= P_{\alpha_1,\alpha_1+\alpha_2}(x)=(1-\omega x)^2,\\
P_{-(\alpha_1+\alpha_2),-\alpha_1}(x) &= P_{\alpha_1,-\alpha_2}(x)=(1-\omega^2 x)^2.
\end{align*}
\end{Ex}

Let $W$ be a $\GE$-module in a category $\mathcal{C}_K$ for $K\in\mathbb{C}$ (see ~\cite[\S3]{LW3}).
By Remark \ref{koukan},
\begin{align}
\lim_{\zeta_1,\zeta_2\to\zeta}P_{\alpha,\beta}(\zeta_1/\zeta_2)x_{\alpha}(\zeta_1)x_{\beta}(\zeta_2)
\label{limdef}
\end{align}
makes sense as an element of $(\END W)\{\zeta\}$ (see ~\cite[\S4,\S5]{MP}), and we denote it by $x_{\alpha,\beta}(\zeta)$.
In the rest of this paper, we take $W=V^{\otimes 3}(\cong V(\Lambda_0)^{\otimes 3})$.

\begin{Prop}\label{mpprop}
We have the following relations in $(\END W)\{\zeta\}$.
\begin{enumerate}
\item\label{mpprop1} $x_{-\alpha_1,-\alpha_1}(\zeta)=
\frac{2\Lambda_0(\pi_{(0)}(x_{-\alpha_1}))^{2}P_{-\alpha_1,-\alpha_1}(1)}{\Lambda_0(\pi_{(0)}(x_{\alpha_1}))}
  E^{-}(\alpha_1,\zeta)x_{\alpha_1}(\zeta)E^{+}(\alpha_1,\zeta)$.
\item\label{mpprop2} $x_{\alpha_1,\alpha_1}(\zeta)=
\frac{2\Lambda_0(\pi_{(0)}(x_{\alpha_1}))^{2}P_{\alpha_1,\alpha_1}(1)}{\Lambda_0(\pi_{(0)}(x_{-\alpha_1}))}
  E^{-}(-\alpha_1,\zeta)x_{-\alpha_1}(\zeta)E^{+}(-\alpha_1,\zeta)$.
\item\label{mpprop3} $x_{\alpha_2,-\alpha_1}(\zeta)=
E^{-}(\alpha_1,\zeta)x_{\alpha_1,\alpha_1+\alpha_2}(\zeta)E^{+}(\alpha_1,\zeta)$.
\item\label{mpprop4} $x_{-(\alpha_1+\alpha_2),-\alpha_1}(\zeta)=
E^{-}(\alpha_1,\zeta)x_{\alpha_1,-\alpha_2}(\zeta)E^{+}(\alpha_1,\zeta)$.
\end{enumerate}
\end{Prop}

\begin{proof}
These are similarly established as the proof of ~\cite[Theorem 6.6]{MP}.
For explicit values of the constants, see \eqref{explicitcvalue} and Example \ref{exppoly2}.
\end{proof}

\begin{Prop}\label{high}
For $a_1,a_2,a_3\in\mathbb{C}$, we define a vector in $W$ by
\begin{align*}
u_{a_1,a_2,a_3} = a_1(B(-1)\otimes 1\otimes 1) + a_2(1\otimes B(-1)\otimes 1) + a_3 (1\otimes 1\otimes B(-1)).
\end{align*}
If $a_1+a_2+a_3=0$ and $(a_1,a_2,a_3)\ne (0,0,0)$, then $u_{a_1,a_2,a_3}$ is a highest weight vector
with highest weight $3\Lambda_0-\alpha_0=\Lambda_0+\Lambda_1+\Lambda_2-\delta$.
\end{Prop}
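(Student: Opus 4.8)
The plan is to check the two defining properties of a highest weight vector for $u_{a_1,a_2,a_3}$: that it is a nonzero weight vector, and that it is killed by the raising operators $e_0,e_1,e_2$ (these generate $\mathfrak{n}_+$, and annihilation by the Chevalley generators propagates to all of $\mathfrak{n}_+$ by the usual induction on root height, so this suffices). Everything hinges on one observation read off from the principal realization: comparing the images of $e_0,e_1,e_2,f_0,f_1,f_2$ with the formulas for $B(\pm1)$ gives
\begin{align*}
B(1)=e_0+e_1+e_2,\qquad B(-1)=f_0+f_1+f_2 .
\end{align*}
Writing $v:=v_{\Lambda_0}$ for the vacuum of each tensor factor of $W=V^{\otimes3}$, and using $f_1v=f_2v=0$ (since $\langle\Lambda_0,h_1\rangle=\langle\Lambda_0,h_2\rangle=0$), this already yields the clean identity $B(-1)v=f_0v$.

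Second, I would settle the weight. The vector $B(-1)v=f_0v$ is nonzero because $\langle\Lambda_0,h_0\rangle=1$, and it spans the principal-degree $-1$ subspace of $V(\Lambda_0)$: the only weight of $V(\Lambda_0)$ at principal degree $-1$ is $\Lambda_0-\alpha_0$, the candidates $\Lambda_0-\alpha_1,\Lambda_0-\alpha_2$ being excluded precisely by $f_1v=f_2v=0$. Hence each of the three summands $B(-1)v\otimes v\otimes v$, $v\otimes B(-1)v\otimes v$, $v\otimes v\otimes B(-1)v$ is a weight vector of weight $3\Lambda_0-\alpha_0$, so $u_{a_1,a_2,a_3}$ is a weight vector of that weight and is nonzero exactly when $(a_1,a_2,a_3)\ne(0,0,0)$. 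The stated identification $3\Lambda_0-\alpha_0=\Lambda_0+\Lambda_1+\Lambda_2-\delta$ is routine weight-lattice bookkeeping: it follows from $\alpha_0=\delta-\alpha_1-\alpha_2$ and $\Lambda_1+\Lambda_2=2\Lambda_0+\alpha_1+\alpha_2$.

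Third, the annihilation by $\mathfrak{n}_+$ is a one-line coproduct computation once the structural identity is in hand. Using $e_iv=0$ and the Chevalley relations $[e_i,f_j]=\delta_{ij}h_i$,
\begin{align*}
e_iB(-1)v=[e_i,B(-1)]v=\Big[e_i,\textstyle\sum_jf_j\Big]v=h_iv=\delta_{i0}\,v .
\end{align*}
Applying $e_i$ to $u_{a_1,a_2,a_3}$ via the Leibniz rule and discarding the cross terms (which carry a factor $e_iv=0$) then gives $e_iu_{a_1,a_2,a_3}=(a_1+a_2+a_3)\,\delta_{i0}\,(v\otimes v\otimes v)$. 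This is zero for every $i\in\{0,1,2\}$ if and only if $a_1+a_2+a_3=0$, which is the hypothesis; combined with the previous paragraph, $u_{a_1,a_2,a_3}$ is a highest weight vector of weight $3\Lambda_0-\alpha_0$.

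I do not expect a genuine obstacle here: the entire argument collapses onto the identity $B(-1)=f_0+f_1+f_2$ and the Chevalley relations. The only steps needing mild care are verifying that $u_{a_1,a_2,a_3}$ is honestly an $\mathfrak{h}$-eigenvector --- handled above through the one-dimensionality of the degree-$(-1)$ space, or alternatively by decomposing $u_{a_1,a_2,a_3}$ into weight components, observing each is separately annihilated by $\mathfrak{n}_+$, and noting that the unique dominant integral weight of level $3$ and principal degree $-1$ is $3\Lambda_0-\alpha_0$ --- and the bookkeeping needed to rewrite $3\Lambda_0-\alpha_0$ in the form $\Lambda_0+\Lambda_1+\Lambda_2-\delta$.
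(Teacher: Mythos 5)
Your proof is correct and is exactly the paper's approach: the paper's own proof is the one-line ``by direct calculation using $B(-1)=f_0+f_1+f_2$ in $\GE$,'' and your argument is precisely that calculation carried out in full (weight bookkeeping via $f_1v_{\Lambda_0}=f_2v_{\Lambda_0}=0$, and $e_iu_{a_1,a_2,a_3}=(a_1+a_2+a_3)\delta_{i0}\,v_{\Lambda_0}^{\otimes3}$ from the Chevalley relations).
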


\begin{proof}
It is easily verified by
direct calculation using $B(-1)=f_0+f_1+f_2$ in $\GE$ (see ~\S\ref{prafflie}) and
$(f_0+f_1+f_2)1=f_{0}1$ in $V\cong V(\Lambda_{0})$ (see ~\cite[(10.1.1)]{Kac}).
\end{proof}

\begin{Rem}\label{notsub}
The level 3 module $V(2\Lambda_i+\Lambda_j+n\delta)$, where $i\ne j\in I$ and $n\in\mathbb{Z}$,
is not a submodule of $W$ because the equation in the weight lattice
\begin{align*}
3\Lambda_{0}-(k_0\alpha_0+k_1\alpha_1+k_2\alpha_2)\equiv  p_0\Lambda_0+p_1\Lambda_1+p_2\Lambda_2\pmod{\mathbb{Z}\delta},
\end{align*}
where $k_0,k_1,k_2,p_0,p_1,p_2\in\mathbb{Z}$,
implies $p_a\equiv p_b\pmod{3}$ for $a\ne b\in I$.
This is easily seen by the fact that any entry in $A^{(1)}_2$ is congruent to 2
modulo 3 (see \eqref{A12GCM}).
\end{Rem}

\section{Spanning vectors}\label{maincomp}

\subsection{Colored integers}
Let $\CZ=\{\cint{n}\mid n\in\mathbb{Z}\}$ be the set of colored integers
and put $\AZ=\mathbb{Z}\sqcup\CZ$. 
Recall \eqref{orde} in \S\ref{mainse}.
There, we defined the order $\geq$ for positive integers and colored positive integers for simplicity.
The order $\geq$ below is a generalization of the order $\geq$ in \S\ref{mainse}.

\begin{Def}\label{twoorders}
On the set $\AZ$, we consider total orders $\geq$ and $\ANOE$ such that
\begin{align*}
\dots>2>\cint{2}>1>\cint{1}>0>\cint{0}>-1>\cint{-1}>-2>\cint{-2}>\cdots, \\
\cdots\ANO\cint{2}\ANO 2\ANO\cint{1}\ANO 1\ANO\cint{0}\ANO 0\ANO\cint{-1}\ANO -1\ANO\cint{-2}\ANO -2\ANO\cdots.
\end{align*}
\end{Def}


Similarly, we (re)define $\COLOR(n)\in\{\pm\}$ and $\CONT(n)\in\mathbb{Z}$ for $n\in\AZ$ by
\begin{align*}
\COLOR(n) = \begin{cases}
+ & \textrm{if $n\in\mathbb{Z}$},\\
- & \textrm{if $n\in\CZ$},
\end{cases}\quad
\CONT(n) = \begin{cases}
n & \textrm{if $n\in\mathbb{Z}$},\\
m & \textrm{if $n=\cint{m}$ for some $m\in\mathbb{Z}$}.
\end{cases}
\end{align*}

Let $\ZSEQ$ be the set of finite length sequences $\boldsymbol{n}=(n_1,\dots,n_{\ell})$
of $\AZ$. 
We put $\SHAPE(\boldsymbol{n})=(\CONT(n_1),\dots,\CONT(n_{\ell}))$, and (re)define the length $\LENGTH(\boldsymbol{n})$ and the size $\SUM{\boldsymbol{n}}$ of $\boldsymbol{n}$ by
\begin{align*}
\LENGTH(\boldsymbol{n})=\ell,\quad
\SUM{\boldsymbol{n}}=\CONT(n_1)+\dots+\CONT(n_{\ell}).
\end{align*}

\begin{Ex}\label{ex2color}
For $\boldsymbol{n}=(\cint{-5},-5,\cint{-5},-5,\cint{-6})$, we have
$\LENGTH(\boldsymbol{n})=5$, $\SUM{\boldsymbol{n}}=-26$ and $\SHAPE(\boldsymbol{n})=(-5,-5,-5,-5,-6)$.
\end{Ex}


\subsection{Lexicographical orders}
Let $X$ be a set with a total order $\AANO$.
The result $(n'_1,\dots,n'_{\ell})$ of sorting
a finite sequence $\boldsymbol{n}=(n_1,\dots,n_{\ell})\in X^{\ell}$
so that $n'_1\AANO\cdots\AANO n'_{\ell}$ is denoted by $\SORT^{(X,\AANO)}(\boldsymbol{n})$, where $\ell\geq 0$. For example, 
we have
$\SORT^{(\AZ,\ANOO)}(\boldsymbol{n})=(\cint{-6},-5,-5,\cint{-5},\cint{-5})$
in Example \ref{ex2color}.

\begin{Def}\label{lexdef3}
For two finite sequences $\boldsymbol{m}=(m_1,\dots,m_{\ell})$ and $\boldsymbol{n}=(n_1,\dots,n_{\ell})$ of $X$ that have the same length $\ell\geq 0$,
we write $\boldsymbol{m}\LEXO{(X,\AANO)}\boldsymbol{n}$ if there exists
$1\leq j\leq\ell$ such that $n_j\NAANO m_j$ 
and $m_k=n_k$ for $k<j$.
\end{Def}

The following four lemmata are easily proved, and we omit proofs.

\begin{Lem}\label{transx}
The binary relation $\LEXO{(X,\AANO)}$ (on $X^{\ell}$ for $\ell\geq 0$) is transitive (and thus $\LEXXO{(X,\AANO)}$ is a total order).
\end{Lem}

\begin{Lem}\label{sortx}
For $\ell\geq 0$ and $\boldsymbol{n}\in X^{\ell}$, we have
$\boldsymbol{n}\LEXXO{(X,\AANO)}\SORT^{(X,\AANO)}(\boldsymbol{n})$.
\end{Lem}

\begin{Lem}\label{mergex}
Let $\boldsymbol{m},\boldsymbol{n}\in X^{\ell}$, where $\ell\geq 0$.
If $\boldsymbol{m}\LEXO{(X,\AANO)}\boldsymbol{n}$ and $\boldsymbol{m}=\SORT^{(X,\AANO)}(\boldsymbol{m})$,
then we have $\SORT^{(X,\AANO)}((\boldsymbol{h},\boldsymbol{m},\boldsymbol{t}))\LEXO{(X,\AANO)}\SORT^{(X,\AANO)}((\boldsymbol{h},\boldsymbol{n},\boldsymbol{t}))$
for any finite sequences $\boldsymbol{h},\boldsymbol{t}$ of $X$.
Here, $(\boldsymbol{h},\boldsymbol{m},\boldsymbol{t})$ stands for the concatenation of $\boldsymbol{h},\boldsymbol{m}$ and $\boldsymbol{t}$.
\end{Lem}

For example, $(\boldsymbol{h},\boldsymbol{m},\boldsymbol{t})=(\cint{-6},\cint{-5},\cint{-6},-6,\cint{-5},\cint{-5})$ when $\boldsymbol{h}=(\cint{-6},\cint{-5})$, $\boldsymbol{m}=(\cint{-6})$ and $\boldsymbol{t}=(-6,\cint{-5},\cint{-5})$ when $X=\AZ$.



\begin{Lem}\label{lexcomp3x}
Let $\boldsymbol{m},\boldsymbol{n}$
be two finite sequences of $X$ such that
$(\boldsymbol{m},\boldsymbol{n})=\SORT^{(X,\AANO)}((\boldsymbol{m},\boldsymbol{n}))$.
We have $(\boldsymbol{m},\boldsymbol{n})\LEXO{(X,\AANO)}\SORT^{(X,\AANO)}((\boldsymbol{m}',\boldsymbol{n}'))$ for any finite sequences $\boldsymbol{m}',\boldsymbol{n}'$ of $X$ such that
$\LENGTH(\boldsymbol{m})=\LENGTH(\boldsymbol{m}')$,
$\LENGTH(\boldsymbol{n})=\LENGTH(\boldsymbol{n}')$ and
$\boldsymbol{m}\LEXO{(X,\AANO)}\boldsymbol{m}'$.
\end{Lem}


\begin{Def}\label{lexdef}
For $\boldsymbol{m}=(m_1,\dots,m_{\ell}),\boldsymbol{n}=(n_1,\dots,n_{\ell})\in\ZSEQ$, we write $\boldsymbol{m}\LEX\boldsymbol{n}$
if we have either $\SHAPE(\boldsymbol{m})\LEXO{(\mathbb{Z},\leq)}\SHAPE(\boldsymbol{n})$ (evidently, exclusive) or
$\SHAPE(\boldsymbol{m})=\SHAPE(\boldsymbol{n})$ and
$\boldsymbol{m}\LEXO{(\AZ,\ANOO)}\boldsymbol{n}$.
\end{Def}

Note that $\LEX$ is transitive, which follows from Lemma \ref{transx}.
Thus, $\LEXX$ is a total order (on $\AZ^{\ell}$ for $\ell\geq 0$).

\begin{Rem}\label{orderimportant}
We emphasize that our definition of $\boldsymbol{m}\LEX\boldsymbol{n}$, where $\boldsymbol{m},\boldsymbol{n}\in\ZSEQ$, 
requires $\LENGTH(\boldsymbol{m})=\LENGTH(\boldsymbol{n})$
while we do not require $\SUM{\boldsymbol{m}}=\SUM{\boldsymbol{n}}$.
\end{Rem}

\begin{Ex}
We have $(-5,-3,\cint{-3})\LEX (\cint{-5},-4,1)$ and $(-7,-5,\cint{-5})\LEX (-7,-5,-5)$.
\end{Ex}

Throughout this paper, we put $\SORT=\SORT^{(\AZ,\ANOO)}$.
Note that we have 
\begin{align}
\SHAPE(\SORT(\boldsymbol{n}))=\SORT^{(\mathbb{Z},\leq)}(\SHAPE(\boldsymbol{n}))
\label{shrel}
\end{align}
for $\boldsymbol{n}\in\ZSEQ$ by the fact that $a\ANOO b$ implies $\CONT(a)\leq\CONT(b)$.

\begin{Cor}\label{lexcomp2}
For $\boldsymbol{n}\in\ZSEQ$, we have $\boldsymbol{n}\LEXX\SORT(\boldsymbol{n})$.
\end{Cor}

\begin{proof}
It directly follows from \eqref{shrel} and Lemma \ref{sortx}. 
\end{proof}

\begin{Cor}\label{lexcomp4}
Let $\boldsymbol{n}=(n_1,\dots,n_{\ell})\in\ZSEQ$.
We have
\begin{align*}
\boldsymbol{n}\LEX\SORT((n_1-\varepsilon_1,\dots,n_{\ell}-\varepsilon_{\ell}))
\end{align*}
for nonnegative integers $\varepsilon_1,\dots,\varepsilon_{\ell}\geq 0$ such that
$\varepsilon_1+\dots+\varepsilon_{\ell}\geq 1$.
Here, for $n\in\AZ$ and $\varepsilon\in\mathbb{Z}$, we define $m=n-\varepsilon\in\AZ$ so that $\CONT(m)=\CONT(n)-\varepsilon$ and $\COLOR(m)=\COLOR(n)$.
\end{Cor}

\begin{proof}
It is not difficult to show $\SHAPE(\boldsymbol{n})\LEXO{(\mathbb{Z},\leq)}\SHAPE(\SORT((n_1-\varepsilon_1,\dots,n_{\ell}-\varepsilon_{\ell})))$ by \eqref{shrel}, Lemma \ref{transx} and Lemma \ref{sortx}.
\end{proof}

The following two lemmata are easily deduced by \eqref{shrel} and Lemma \ref{mergex}, Lemma \ref{lexcomp3x} respectively.

\begin{Cor}\label{lexcomp}
Let $\boldsymbol{m},\boldsymbol{n}\in\ZSEQ$.
If $\boldsymbol{m}\LEX\boldsymbol{n}$ and $\boldsymbol{m}=\SORT(\boldsymbol{m})$,
then we have $\SORT((\boldsymbol{h},\boldsymbol{m},\boldsymbol{t}))\LEX\SORT((\boldsymbol{h},\boldsymbol{n},\boldsymbol{t}))$
for $\boldsymbol{h},\boldsymbol{t}\in\ZSEQ$.
\end{Cor}

\begin{Cor}\label{lexcomp3}
Let $\boldsymbol{m},\boldsymbol{n}\in\ZSEQ$
be
$(\boldsymbol{m},\boldsymbol{n})=\SORT((\boldsymbol{m},\boldsymbol{n}))$.
We have $(\boldsymbol{m},\boldsymbol{n})\LEX\SORT((\boldsymbol{m}',\boldsymbol{n}'))$ for $\boldsymbol{m}',\boldsymbol{n}'\in\ZSEQ$ such that
$\LENGTH(\boldsymbol{m})=\LENGTH(\boldsymbol{m}')$,
$\LENGTH(\boldsymbol{n})=\LENGTH(\boldsymbol{n}')$,
$\boldsymbol{m}\LEX\boldsymbol{m}'$, $\SHAPE(\boldsymbol{m})\ne\SHAPE(\boldsymbol{m}')$.
\end{Cor}

In \S\ref{cal}, we apply Corollary \ref{lexcomp3} under $\SUM{\boldsymbol{m}}\ne\SUM{\boldsymbol{m}'}$, which implies $\SHAPE(\boldsymbol{m})\ne\SHAPE(\boldsymbol{m}')$.





\subsection{Reducibilities}\label{redchap}
In the rest, 
we put $x_{\pm\alpha_1}(n)=\YMP{n}$ for $n\in\mathbb{Z}$ and
$\YY(\boldsymbol{n})=\YY_{\COLOR(n_1)}(\CONT(n_1))\cdots\YY_{\COLOR(n_{\ell})}(\CONT(n_{\ell}))$ for $\boldsymbol{n}\in\ZSEQ$. 


\begin{Def}\label{defnseq}
Let $\NSEQ$ be the subset of $\ZSEQ$ consisting of $\boldsymbol{n}=(n_1,\dots,n_{\ell})\in\ZSEQ$
such that $\boldsymbol{n}=\SORT(\boldsymbol{n})$ (i.e., $\boldsymbol{n}$ is weakly increasing
by the order $\ANOO$) and $\CONT(n_i)<0$ for $1\leq i\leq\ell$ (i.e., $\boldsymbol{n}$ consists of
negative integers or colored negative integers).
\end{Def}

Note that the assignment $(\lambda_1,\dots,\lambda_{\ell})\mapsto (-\lambda_1,\dots,-\lambda_{\ell})$ is a bijection from the set of 2-colored partitions (in the sense of \S\ref{mainse}) to $\NSEQ$.


\begin{Lem}
\label{sortres3}
For $\ell\geq 0$, $s\in\mathbb{Z}$ and a highest weight vector $\VZ\in W$, we have 
\begin{align*}
\Theta_{\ell,s}\VZ
=\sum_{\substack{\boldsymbol{m}\in\NSEQ \\ \LENGTH(\boldsymbol{m})\leq\ell, s\leq\SUM{\boldsymbol{m}}}} 
U(\GAAA_-)_{s-\SUM{\boldsymbol{m}}}\BUI{\boldsymbol{m}}.
\end{align*}
\end{Lem}

\begin{proof}
It follows from Remark \ref{sortresRM},
highestness of $\VZ$ and $U(\GAAA_-)_{t}=\{0\}$ for $t>0$.
\end{proof}

\begin{Def}
Let $\VZ\in W$ be a highest weight vector. 
We say that an element $\boldsymbol{n}$ in $\ZSEQ$ is $\VZ$-reducible if $\BUI{\boldsymbol{n}}\in W_{>\boldsymbol{n}}(\VZ)$, where (see also Remark \ref{orderimportant})
\begin{align*}
W_{>\boldsymbol{n}}(\VZ)
= 
\sum_{\substack{\boldsymbol{m}\in\NSEQ \\ \LENGTH(\boldsymbol{n})\geq\LENGTH(\boldsymbol{m}),
    \SUM{\boldsymbol{n}}<\SUM{\boldsymbol{m}}}} 
U(\GAAA_-)_{\SUM{\boldsymbol{n}}-\SUM{\boldsymbol{m}}}\BUI{\boldsymbol{m}}
+ \sum_{\substack{\boldsymbol{m}\in\NSEQ \\ \LENGTH(\boldsymbol{n})>\LENGTH(\boldsymbol{m}), \SUM{\boldsymbol{n}}=\SUM{\boldsymbol{m}}}} \mathbb{C}\BUI{\boldsymbol{m}}
+ \sum_{\substack{\boldsymbol{m}\in\NSEQ \\ \boldsymbol{n}\LEX\boldsymbol{m}, \SUM{\boldsymbol{n}}=\SUM{\boldsymbol{m}}}} \mathbb{C}\BUI{\boldsymbol{m}}.
\end{align*}
Otherwise, we say that $\boldsymbol{n}$ is $\VZ$-irreducible.
\end{Def}


\begin{Rem}\label{deghomgpart2}
By Lemma \ref{sortres3}, for $\boldsymbol{n}\in\ZSEQ$, we have
\begin{align*}
\Theta_{\LENGTH(\boldsymbol{n})-1,\SUM{\boldsymbol{n}}}\VZ\subseteq
W_{>\boldsymbol{n}}(\VZ).
\end{align*}
\end{Rem}

\begin{Prop}\label{ensort}
For $\boldsymbol{n}\in\ZSEQ\setminus\NSEQ$, 
$\boldsymbol{n}$ is $\VZ$-irreducible.
\end{Prop}

\begin{proof}
Let $\boldsymbol{n}=(n_1,\dots,n_{\ell})$ and $\boldsymbol{n}'=\SORT(\boldsymbol{n})$. By \eqref{sortres}, we have $\BUI{\boldsymbol{n}}-\BUI{\boldsymbol{n}'}\in\Theta_{\LENGTH(\boldsymbol{n})-1,\SUM{\boldsymbol{n}}}\VZ$.
If $\CONT(n_i)>0$ for some $i$,
we have $\BUI{\boldsymbol{n}'}=0$.
If otherwise and
$\CONT(n_j)= 0$ for some $j$, 
we have $\BUI{\boldsymbol{n}'}\in\Theta_{\LENGTH(\boldsymbol{n})-1,\SUM{\boldsymbol{n}}}\VZ$.
If otherwise, we have $\boldsymbol{n}\ne\boldsymbol{n}'$ and thus
we have $\boldsymbol{n}\LEX\boldsymbol{n}'$ by Corollary \ref{lexcomp2}.
\end{proof}


\begin{Prop}\label{spaneq}
Let $\RED(\VZ)$ be the subset of $\NSEQ$, consisting of all $\VZ$-irreducible elements $\boldsymbol{n}$. We have 
\begin{align*}
U(\GE)\VZ = \sum_{\boldsymbol{n}\in\RED(\VZ)} U(\GAAA_-)\BUI{\boldsymbol{n}}.
\end{align*}
\end{Prop}

\begin{proof}
It is routine to show
$\BUI{\boldsymbol{m}}\in\sum_{\boldsymbol{n}\in\RED(\VZ)} U(\GAAA_-)\BUI{\boldsymbol{n}}$ for any $\boldsymbol{m}\in\NSEQ$
by induction on $-\SUM{\boldsymbol{m}}$ (and on $\LENGTH(\boldsymbol{m})$ and on $\LEXX$).
\end{proof}

\subsection{Forbidden patterns}
We say that $\boldsymbol{n}=(n_1,\dots,n_{\ell})\in\ZSEQ$ contains
$\boldsymbol{m}=(m_1,\dots,m_{\ell'})\in\ZSEQ$ if there exists $0\leq i\leq \ell-\ell'$ such that
$n_{j+i}=m_j$ for $1\leq j\leq \ell'$.

\begin{Def}
We say that $\boldsymbol{m}\in\ZSEQ$ is a forbidden pattern
if $\boldsymbol{n}\in\ZSEQ$ is $\VZ$-reducible for any highest weight vector $\VZ\in W$
whenever $\boldsymbol{n}$ contains $\boldsymbol{m}$.
\end{Def}

\begin{Thm}\label{forthm}
For $k\in\mathbb{Z}$, the following elements in $\ZSEQ$ 
are forbidden patterns.
\begin{enumerate}
\item[(1)] $(-k,-k), (\cint{-k},\cint{-k}), (-k-1,-k), (\cint{-k-1},\cint{-k})$.
\item[(2)] $(\cint{-1-3k},-3k),(-1-3k,\cint{-3k})$.
\item[(3)] $(-1-3k,\cint{-1-3k}),(\cint{-2-3k},-3k)$.
\item[(4)] $(-2-3k,\cint{-2-3k}),(\cint{-3-3k},-1-3k)$.
\item[(5)] $(\cint{-3-3k},-2-3k),(-3-3k,\cint{-2-3k})$.
\item[(6)] $(-3-3k,\cint{-3-3k},\cint{-1-3k}),(-5-3k,-3-3k,\cint{-3-3k})$.
\item[(7)] $(\cint{-5-3k},-4-3k,-2-3k,\cint{-1-3k})$.
\end{enumerate}
\end{Thm}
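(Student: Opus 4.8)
The plan is to show each listed sequence $\boldsymbol{m}$ is a forbidden pattern by establishing a local reduction relation in $(\END W)\{\zeta\}$ and then globalizing it. Recall the definition: $\boldsymbol{m}$ is forbidden if every $\boldsymbol{n}\in\ZSEQ$ containing $\boldsymbol{m}$ is $\VZ$-reducible for every highest weight vector $\VZ$. By Lemma \ref{lexcomp} and the structure of $W_{>\boldsymbol{n}}$, it suffices to prove the reduction at the location of $\boldsymbol{m}$ itself, i.e., to express the product of vertex operators $\YY_{\COLOR(m_1)}(m_1)\cdots\YY_{\COLOR(m_{\ell'})}(m_{\ell'})$ as a sum of terms that are strictly $\LEX$-larger (or involve fewer factors of $x_{\pm\alpha_1}$, absorbed into the Heisenberg part), after sorting. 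Each such improvement, inserted between fixed heads $\boldsymbol{h}$ and tails $\boldsymbol{t}$, produces via $\SORT$ a $\LEX$-dominant contribution lying in $W_{>\boldsymbol{n}}$, which is exactly $\VZ$-reducibility.

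First I would handle case (1), the length-two ``equal or adjacent, same color'' patterns. These are the analog of the difference-$\ge 2$ condition in the classical Lepowsky--Wilson/Meurman--Primc setting. Using Proposition \ref{mpprop}(1),(2) together with the vertex-operator commutation coming from Example \ref{exppoly} (the factor $\myphi_{\alpha_1,\alpha_1}(x)=(1-x^3)/(1-x)^3$), I would extract the coefficient of the appropriate power of $\zeta$ in the generating-function identity for $x_{-\alpha_1,-\alpha_1}(\zeta)$ and $x_{\alpha_1,\alpha_1}(\zeta)$. Because $P_{\alpha_1,\alpha_1}(x)=1+x+x^2$ (Example \ref{exppoly2}) kills exactly the resonant poles, the regularized product $x_{\pm\alpha_1,\pm\alpha_1}(\zeta)$ equals a single ``merged'' operator of strictly smaller length (two $x$'s collapse through $E^{\pm}$), so the diagonal and near-diagonal coefficients $x_{\pm\alpha_1}(-k)x_{\pm\alpha_1}(-k)$ and $x_{\pm\alpha_1}(-k-1)x_{\pm\alpha_1}(-k)$ reduce. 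This is the standard MP argument transported to $A_2^{(1)}$; I would cite the mechanism of \cite[Theorem 6.6]{MP}.

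Cases (2)--(5) are the mixed-color length-two patterns, which encode conditions (D1) and (D2). Here the relevant structure constants are $\myphi_{-\alpha_1,\alpha_1}(x)=(1-x)^3/(1-x^3)=1+\sum_{k\ge1}3(x^{3k-1}-x^{3k-2})$ from Example \ref{exppoly}, and the polynomials $P_{\alpha_2,-\alpha_1}$, $P_{-(\alpha_1+\alpha_2),-\alpha_1}$ in Example \ref{exppoly2}. For these I would use Proposition \ref{mpprop}(3),(4), which rewrite the mixed regularized products $x_{\alpha_2,-\alpha_1}(\zeta)$ and $x_{-(\alpha_1+\alpha_2),-\alpha_1}(\zeta)$ in terms of $x_{\alpha_1,\alpha_1+\alpha_2}$ and $x_{\alpha_1,-\alpha_2}$ dressed by $E^{\pm}(\alpha_1,\zeta)$. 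The appearance of the shift by $3$ in the expansion of $\myphi_{-\alpha_1,\alpha_1}$ is precisely what forces the $3\mathbb{Z}$ and residue-mod-$3$ conditions separating the subcases $-3k,-1-3k,-2-3k$; extracting the correct $\zeta$-coefficient in each residue class gives the stated forbidden pairs. The sign pattern $(\COLOR,\COLOR)\ne(-,+)$ in (D2) should emerge from the asymmetry of which operator sits on the left after applying Corollary \ref{idou} to push $E^{+}$ past the $X$'s.

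For the length-three patterns in (6) and the length-four pattern in (7), which correspond to condition (D3), I expect the main obstacle. Here no single commutation move suffices; instead I would first apply the two-term reductions from (1)--(5) to show that any longer-support resonance collapses onto one of these irreducible three- or four-term relations, and then verify by a finite, explicit coefficient computation (extracting a fixed $\zeta^{-n}$ coefficient from a product of three or four regularized operators, using Corollary \ref{idou} iteratively) that the specific combination listed is itself $\LEX$-reducible after sorting. The difficulty is bookkeeping: one must track how sorting by $\ANO$ interacts with the several competing terms produced by the triple product, and confirm that the surviving leading term is strictly $\LEX$-larger than $\boldsymbol{n}$ rather than equal to it; controlling this requires Lemma \ref{lexcomp2} and Lemma \ref{lexcomp} to guarantee the reduction survives embedding into an arbitrary $(\boldsymbol{h},\boldsymbol{m},\boldsymbol{t})$. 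I would organize the verification case by case in $k\bmod 3$ (though the patterns are already written with explicit $3k$ shifts), reducing each to a finite check valid uniformly in $k$ by the translation-covariance $x_{\nu^p\alpha}(\zeta)=x_{\alpha}(\omega^p\zeta)$ of (V1).
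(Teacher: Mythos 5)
Your global framework (reduce to a local operator identity whose lex-minimal monomial is the pattern, then globalize with Corollary \ref{idou}, Lemma \ref{sortres3} and Lemma \ref{lexcomp}) and your treatment of the length-two cases track the paper's proof: for (1) the paper expands the modes $R_{\pm}(-2k)$, $R_{\pm}(-2k-1)$ of the regularized squares and collapses them to Heisenberg-dressed single operators via Proposition \ref{mpprop}(1),(2); for (2)--(5) it uses the series $R_1(\zeta)$, $R_2(\zeta)$, which vanish identically precisely because of Proposition \ref{mpprop}(3),(4). One caveat even here: a single coefficient extraction produces one relation, whose lex-minimal monomial disposes of only one pattern of each pair; the second pattern requires an independent linear combination (the paper uses, e.g., $R_1(-6k-1)/(-3(1+2\omega))$ for $(\cint{-1-3k},-3k)$ but $(R_1(-6k-1)+R_2(-6k-1))/(-9)$ for $(-1-3k,\cint{-3k})$). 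Your sketch never registers that both $R_1$ and $R_2$, in specific combinations, are needed, though this is repairable within your plan.

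The genuine gap is in (6) and (7), which are the actual new content of the theorem (condition (D3)). Your first step --- ``apply the two-term reductions from (1)--(5) to show that any longer-support resonance collapses onto one of these irreducible three- or four-term relations'' --- cannot work as stated: the length-three patterns are not consequences of the length-two ones (both length-two subwords of $(-3-3k,\cint{-3-3k},\cint{-1-3k})$ are allowed), and one cannot ``apply'' an already-established forbidden pattern inside an operator identity; one must substitute the corresponding operator relation. What is actually required, and what the paper does, is to build explicit cubic identities such as $Z_1(k)=\YP{-3-3k}R_{-}(-4-6k)/3-(R_1(-6k-5)+R_2(-6k-5))\YM{-2-3k}/9$, where the scalar coefficients are tuned so that the lex-smaller cubic monomial $\YP{-3-3k}\YM{-2-3k}\YM{-2-3k}$ cancels and the target $\YP{-3-3k}\YM{-3-3k}\YM{-1-3k}$ becomes lex-leading; a generic coefficient of a generic triple product, which is all your plan specifies, has the wrong lex-minimal term and the inclusion into $W_{>\boldsymbol{n}}$ fails. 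Moreover, even after this cancellation the $E^{+}(i)$-coefficients of these identities still contain monomials that are lex-smaller than the target (e.g.\ $\YP{-4-3k}\YP{-4-3k}\YM{-4-3k}$ against $(-5-3k,-3-3k,\cint{-3-3k})$ in $Z_2(k)$), so one must substitute the quadratic relations $R_{+}(-8-6k)$, $R_{+}(-7-6k)$, $R_{-}(-5-6k)$ into those coefficients (the paper's passages $Z_2\mapsto Z_2'\mapsto Z_2''$ and $Z_1\mapsto Z_1'$), and (7) needs the further quartic combination $Z_3(k)=Z_2'(k)\YM{-1-3k}-\YP{-5-3k}Z_1'(k)$. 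None of this cancellation-and-substitution mechanism, which is the heart of the proof of (6) and (7), appears in your proposal.
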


\begin{proof}
We prove that the elements in (1),(2),(3),(4),(5),(6),(7) are forbidden patterns
in \S\ref{forone}, \S\ref{fortwoA}, \S\ref{fortwoB}, \S\ref{fortwoC}, \S\ref{fortwoD}, \S\ref{forthree}, \S\ref{forfour}, respectively. 
\end{proof}

In the rest of this section, we put $w=1\otimes 1\otimes 1\in W$.

\begin{Prop}\label{initcor}
An element $\lambda=(\lambda_1,\dots,\lambda_{\ell})$ in $\NSEQ$ is $w$-reducible if
$\ell\geq 1$ and $\lambda_{\ell}=-1,\cint{-1},\cint{-2}$.
\end{Prop}

\begin{proof}
By direct calculation, we have
$\BUIII{-1}=-\BUIII{\cint{-1}} \in \mathbb{C}B(-1)w$.
Thus, by Remark \ref{sortresRM}, we have $\BUIII{(\lambda_1,\dots,\lambda_{\ell})}\in\Theta_{\ell-1,|\lambda|}$ if $\lambda_{\ell}=-1,\cint{-1}$.
The argument for $\lambda_{\ell}=\cint{-2}$
is similar because of $\BUIII{-2}-\BUIII{\cint{-2}} \in \mathbb{C}B(-2)w$,
\eqref{sortres} and 
 $(\lambda_1,\dots,\lambda_{\ell-1},\cint{-2})\LEX\SORT((\lambda_1,\dots,\lambda_{\ell-1},-2))$ by Corollary \ref{lexcomp}.
\end{proof}

For a 2-colored partition $\lambda=(\lambda_1,\dots,\lambda_{\ell})$, 
we easily see that the condition that $\lambda$ satisfies (D1)--(D3) in \S\ref{mainse} is equivalent to
the condition that $(-\lambda_1,\dots,-\lambda_{\ell})\in\NSEQ$ does not contain the elements (1)--(7) in Theorem \ref{forthm}.
Concerning Proposition \ref{initcor} and (D4), we may restate Proposition \ref{spaneq} as follows. 

\begin{Cor}\label{biidenintercor}
For $i=1$ (resp. $i=2$), put $v_1=u_{a_1,a_2,a_3}$ with $a_1+a_2+a_3=0$ and $(a_1,a_2,a_3)\ne (0,0,0)$ (resp. $v_2=w$).
The set
\begin{align*}
\{\AOB{-\mu_1}\cdots\AOB{-\mu_{\ell'}}\BUII{(-\lambda_1,\dots,-\lambda_{\ell})}{i}\}
\end{align*}
spans a module that is isomorphic to 
$V((2i-1)\Lambda_0+(2-i)\Lambda_1+(2-i)\Lambda_2-(2-i)\delta)$, where $(\mu_1,\dots,\mu_{\ell'})$ varies in $\REG{3}$
and $(\lambda_1,\dots,\lambda_{\ell})$ varies in $\BIR$ (resp. $\BIRP$).
\end{Cor}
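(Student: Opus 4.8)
The plan is to extract the statement from the abstract spanning identity \eqref{spaneq}, whose index set $\RED_{v}$ of $v$-irreducible sequences is so far only implicitly defined; the whole task is to show that $\RED_{v}$ is contained in the negated copy of $\BIR$ (for $v=v_1$) or of $\BIRP$ (for $v=v_2=w$), and then to recognize the cyclic submodule $U(\GE)v$ as the asserted standard module. Writing $-\lambda=(-\lambda_1,\dots,-\lambda_\ell)$ for the sequence obtained by negating contents and preserving colors, I note that $\lambda\mapsto-\lambda$ is an order-reversing bijection carrying the 2-colored partitions onto $\NSEQ$, so the argument can be run entirely on the $\NSEQ$ side.

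First I would invoke Theorem \ref{forthm}: any sequence that contains one of the forbidden patterns (1)--(7) is $v$-reducible for every highest weight vector $v$, hence lies outside $\RED_{v}$. Therefore $\RED_{v}$ consists only of sequences avoiding (1)--(7), which by the equivalence recorded just before the corollary is exactly $\{-\lambda\mid\lambda\in\BIR\}$. Since each $v(-\lambda_1,\dots,-\lambda_\ell)$ with $\lambda\in\BIR$ obviously lies in $U(\GE)v$, and since $U(\GAAA_-)=\mathbb{C}[B(n)\mid n<0,\ 3\nmid n]$ has the monomial basis indexed by $\REG{3}$ (Remark \ref{indv}), the inclusions
\[
U(\GE)v=\sum_{\boldsymbol{n}\in\RED_{v}}\mathbb{C}[B(n)]\,v(\boldsymbol{n})\ \subseteq\ \sum_{\lambda\in\BIR}\mathbb{C}[B(n)]\,v(-\lambda_1,\dots,-\lambda_\ell)\ \subseteq\ U(\GE)v
\]
collapse to equalities, which is the spanning statement for $i=1$.

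For $v=w$ I would additionally bring in Corollary \ref{initcor}: any sequence whose $\ANO$-largest entry is $-1,\cint{-1}$, or $\cint{-2}$ is $w$-reducible. Because a 2-colored partition is weakly decreasing, the presence of any of $1,\cint{1},\cint{2}$ forces the smallest part to lie in $\{1,\cint{1},\cint{2}\}$; hence excluding these as smallest part is the same as imposing (D4). Combined with the previous paragraph this yields $\RED_{w}\subseteq\{-\lambda\mid\lambda\in\BIRP\}$, and the identical squeeze shows that the family indexed by $\REG{3}\times\BIRP$ spans $U(\GE)w$. To name the module, I would use that $W=V^{\otimes3}$ is integrable, so the submodule generated by the highest weight vector $w$ (resp.\ $u_{a_1,a_2,a_3}$, by Proposition \ref{high}) is an irreducible integrable highest weight module, i.e.\ the standard module of weight $3\Lambda_0$ (resp.\ $\Lambda_0+\Lambda_1+\Lambda_2-\delta$); these are precisely $(2i-1)\Lambda_0+(2-i)\Lambda_1+(2-i)\Lambda_2-(2-i)\delta$ for $i=2$ (resp.\ $i=1$).

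With the heavy lifting already carried out in Theorem \ref{forthm} and \eqref{spaneq}, the two delicate points in assembling the corollary are combinatorial. The first, and the one I expect to be the main obstacle, is the dictionary between avoidance of the patterns (1)--(7) and the conditions (D1)--(D3): it must be checked pattern by pattern while tracking both orders $>$ and $\ANO$ and the sign conventions, and is where the bookkeeping is most error-prone. The second is the remark that the single-entry reducibility of Corollary \ref{initcor} already encodes the global condition (D4); this is clean but rests on the monotonicity of $\lambda$ and deserves to be stated explicitly rather than left implicit.
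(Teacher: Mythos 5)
Your proposal is correct and follows essentially the same route as the paper: it assembles the corollary from the spanning identity \eqref{spaneq}, Theorem \ref{forthm} together with the dictionary between (D1)--(D3) and avoidance of the patterns (1)--(7), Corollary \ref{initcor} for condition (D4), and Proposition \ref{high} for the highest weight. The only addition is that you spell out the squeeze argument and the identification of $U(\widetilde{\mathfrak{g}})v_i$ with the standard module via integrability of $V^{\otimes 3}$, steps the paper leaves implicit.
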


\section{Vertex operator calculations}\label{cal}
Recall $\omega=\exp(2\pi\sqrt{-1}/3)$ in \S\ref{vertset}.
In the following of this section,
we fix an integer $k$ and a highest weight vector $\VZ$ in $W$.
We also recall $\YY(\boldsymbol{n})=\YY_{\COLOR(n_1)}(\CONT(n_1))\cdots\YY_{\COLOR(n_{\ell})}(\CONT(n_{\ell}))$ for $\boldsymbol{n}\in\ZSEQ$ (see \S\ref{redchap}). It is called a $\YY$-monomial in this section.
We remark that the arguments in this section are similar to ~\cite{Nan}.

\subsection{Annihilating elements}
Recall \eqref{vvoorr} and \eqref{limdef}.
We denote by $R_i(n)$ the coefficient of $\zeta^n$ in $R_i(\zeta)$ for $i\in\{1,2,\pm\}$, where 
\begin{align*}
R_{+}(\zeta) &= x_{\alpha_1,\alpha_1}(\zeta), \quad\quad
R_{-}(\zeta) = x_{-\alpha_1,-\alpha_1}(\zeta), \\ 
R_1(\zeta) &= E^{-}(-\alpha_1,\zeta)x_{\alpha_2,-\alpha_1}(\zeta)-x_{\alpha_1,\alpha_1+\alpha_2}(\zeta)E^{+}(\alpha_1,\zeta),\\
R_2(\zeta) &= E^{-}(-\alpha_1,\zeta)x_{-(\alpha_1+\alpha_2),-\alpha_1}(\zeta)-x_{\alpha_1,-\alpha_2}(\zeta)E^{+}(\alpha_1,\zeta).
\end{align*}

We abbreviate $E^+(\alpha_1,n)$ (resp. $E^-(-\alpha_1,n)$) to $E^+(n)$ (resp. $E^-(n)$)
for $n\in\mathbb{Z}$. Note that $E^\pm(n)=0$ when $\mp n>0$.

\subsection{Preparations}\label{nandiprep}
For an integer $\ell\geq 0$ and summable expressions (see ~\cite[\S4]{MP}) $G$ and $H$ that have the same principal degree $a\in\mathbb{Z}$,
the notation $G\equiv_{\ell} H$ means that we have $G\VU-H\VU\in\Theta_{\ell,a}\VU$
for any (i.e., not necessarily highest weight) vector $\VU$ in any $\GE$-module
in a category $\mathcal{C}_K$ for any $K\in\mathbb{C}$ (see ~\cite[\S3]{LW3}).

\begin{Def}
Let $\ell\geq 1$ and $a\in\mathbb{Z}$.
In this paper, a good expression of width $\ell$ and degree $a$
is an expansion (i.e., possibly an infinite formal sum) of the form 
\begin{align}
\xi=\sum_{\boldsymbol{p}=(p_1,\dots,p_{\ell})\in\AZ^{\ell}, \SUM{\boldsymbol{p}}=a} c_{\boldsymbol{p}}\YY(\boldsymbol{p}),
\label{goodexp}
\end{align}
where $c_{\boldsymbol{p}}$ are complex numbers,
such that $\SUPP_i(\xi)$ is finite for any $i\in\mathbb{Z}$.
\end{Def}

As usual (see ~\cite[(4.8),(6.19)]{LW3}), $\SUPP_i(\xi)$ is defined to be the set
\begin{align*}
\{\boldsymbol{p}=(p_1,\dots,p_{\ell})\in\AZ^{\ell}\mid c_{\boldsymbol{p}}\ne 0 \textrm{ and }
  \CONT(p_{\ell'})+\dots+\CONT(p_{\ell})\leq i \textrm{ for } 1\leq \ell'\leq\ell\}.
\end{align*}

A good expression stands for a good expression of width $\ell$ and degree $a$
for some $\ell\geq 1$ and $a\in\mathbb{Z}$.
It is clear that a good expression is summable
for any $\GE$-module
in a category $\mathcal{C}_K$ for any $K\in\mathbb{C}$.

We say that a good expression $\xi$ is zero if it is formally zero (i.e.,
$c_{\boldsymbol{p}}=0$ for all $\boldsymbol{p}$).
For a nonzero good expression $\xi$, we define
the leading monomial $\LM(\xi)$ to be the $\YY$-monomial
$\YY(\boldsymbol{p})$ such that
$c_{\boldsymbol{p}}\ne 0$, $\boldsymbol{p}=\SORT(\boldsymbol{p})$ and $\boldsymbol{p}$ is minimum with respect to $\LEXX$ among such in \eqref{goodexp} if exists
(otherwise, $\LM(\xi)$ is not defined).

\begin{Ex}\label{firstexpansionex}
By \eqref{sortres} and
$P_{\beta,\beta}(1)=3$ for $\beta=\pm\alpha_1$ (see Example \ref{exppoly2}), we have 
\begin{align*}
R_{\pm}(-2k)/3 &\equiv_1 \YMP{-k}\YMP{-k}+2\YMP{-k-1}\YMP{-k+1}+2\YMP{-k-2}\YMP{-k+2}+\cdots, \\
R_{\pm}(-2k-1)/3 &\equiv_1 2(\YMP{-k-1}\YMP{-k}+\YMP{-k-2}\YMP{-k+1}+\YMP{-k-3}\YMP{-k+2}+\cdots),
\end{align*}
where the right hand sides are good expressions of width 2 and degree $-2k$, $-2k-1$ with the leading monomials $\YMP{-k}\YMP{-k}$, $\YMP{-k-1}\YMP{-k}$, respectively. Note 
that $\YY$-monomials are arranged in order of $\LEX$.
\end{Ex}


\begin{Prop}\label{forbhantei}
Let $b\geq 1$ and $a\in\mathbb{Z}$.
Assume that we have an expression
\begin{align*}
R = r + \sum_{i\geq 1}E^{-}(-i)r_{i}^{-} + \sum_{i\geq 1}r_{i}^{+}E^{+}(i),
\end{align*}
such that $R\equiv_{b-1}0$, where $r$ is a nonzero good expression of width $b$ and degree $a$ with $\boldsymbol{g}=\LM(r)$,
$r_i^{\pm}$ is a good expression of width $b$ and degree $a\mp i$. If
any $\YY$-monomial $\YY(\boldsymbol{m})$ in $r_i^{+}$ satisfies $\boldsymbol{g}\LEX\boldsymbol{m}$ for $i\geq 1$,
then $\boldsymbol{g}$ is a forbidden pattern.
\end{Prop}

\begin{proof}
Take $\boldsymbol{j},\boldsymbol{h}\in\ZSEQ$ and let $\boldsymbol{n}=(\boldsymbol{j},\boldsymbol{g},\boldsymbol{h})$.
By $R\equiv_{b-1}0$ and \eqref{thetaideal}, we have $\YY(\boldsymbol{j})R\BUI{\boldsymbol{h}}\in\Theta_{\ell(\boldsymbol{n})-1,\SUM{\boldsymbol{n}}}\VZ$,
which is a subspace of $W_{>\boldsymbol{n}}(\VZ)$
by Remark \ref{deghomgpart2}.
By Proposition \ref{ensort},
it is enough to show that $\boldsymbol{n}$ is $\VZ$-reducible assuming
$\boldsymbol{n}=\SORT(\boldsymbol{n})$. 

Let $c_{\boldsymbol{g}}$ be the (nonzero) coefficient of $\YY(\boldsymbol{g})$ in $r$ and
\begin{align*}
U=\sum_{{\boldsymbol{m}\in\NSEQ, \boldsymbol{n}\LEX\boldsymbol{m}, \SUM{\boldsymbol{n}}=\SUM{\boldsymbol{m}}}} \mathbb{C}\BUI{\boldsymbol{m}}.
\end{align*}

By Corollary \ref{lexcomp2}, Corollary \ref{lexcomp} and \eqref{sortres}, we have
\begin{align*}
  \YY(\boldsymbol{j})r\BUI{\boldsymbol{h}}
  -
  c_{\boldsymbol{g}}\BUI{\boldsymbol{n}}
  \in \Theta_{\LENGTH(\boldsymbol{n})-1,\SUM{\boldsymbol{n}}}\VZ
+ U.
\end{align*}

Let $\boldsymbol{h}=(h_1,\dots,h_u)$. By Corollary \ref{idounan}, 
we have
\begin{align*}
  E^{+}(i)\BUI{\boldsymbol{h}}\in\sum_{(i_1,\dots,i_u)\in\mathbb{Z}^u_{\geq 0}, i_1+\dots+i_u=i}\mathbb{C}\BUI{(h_1+i_1,\dots,h_u+i_u)}.
  \end{align*}
By \eqref{sortres}, Corollary \ref{lexcomp2}, Corollary \ref{lexcomp} and Corollary \ref{lexcomp3}, we have
\begin{align*}
  \YY(\boldsymbol{j})r_i^{+}E^{+}(i)\BUI{\boldsymbol{h}}\in \Theta_{\LENGTH(\boldsymbol{n})-1,\SUM{\boldsymbol{n}}}\VZ 
  + U.
\end{align*}

Let $\boldsymbol{j}=(j_1,\dots,j_p)$. By Corollary \ref{idounan}, 
$\YY(\boldsymbol{j})E^{-}(-i)r_i^{-}\BUI{\boldsymbol{h}}$ belongs to
\begin{align*}
\sum_{i'=1}^{i}
E^{-}(-i')\Theta_{\LENGTH(\boldsymbol{n}),\SUM{\boldsymbol{n}}+i'}\VZ
+\sum_{(i_1,\dots,i_p)\in\mathbb{Z}^p_{\geq 0}, i_1+\dots+i_p=i}
\mathbb{C}Y((j_1-i_1,\dots,j_p-i_p))r_i^{-}\BUI{\boldsymbol{h}}.
\end{align*}
By Remark \ref{sortresRM}, Corollary \ref{lexcomp4} and Corollary \ref{lexcomp3}, we see that
\begin{align*}
Y((j_1-i_1,\dots,j_p-i_p))r_i^{-}\BUI{\boldsymbol{h}}\in
\Theta_{\LENGTH(\boldsymbol{n})-1,\SUM{\boldsymbol{n}}}\VZ+ U.
\end{align*}

By $\YY(\boldsymbol{j})R\BUI{\boldsymbol{h}}
=
c_{\boldsymbol{g}}\BUI{\boldsymbol{n}}
+(\YY(\boldsymbol{j})r\BUI{\boldsymbol{h}}-c_{\boldsymbol{g}}\BUI{\boldsymbol{n}})
+\sum_{i\geq 1}\YY(\boldsymbol{j})r_i^{+}E^{+}(i)\BUI{\boldsymbol{h}}
+\sum_{i\geq 1}\YY(\boldsymbol{j})E^{-}(-i)r_i^{-}\BUI{\boldsymbol{h}}$ and Lemma \ref{sortres3}, 
we have $\BUI{\boldsymbol{n}}\in W_{>\boldsymbol{n}}(\VZ)$.
\end{proof}

\subsection{Forbiddenness of $(-k,-k)$, $(\cint{-k},\cint{-k})$, $(-k-1,-k)$ and $(\cint{-k-1},\cint{-k})$}\label{forone}
By Proposition \ref{mpprop} (\ref{mpprop2}), we have (on any $\GE$-module in a category $\mathcal{C}_K$ for any $K\in\mathbb{C}$)
\begin{align*}
  R_+(-2k)
  = \rho\sum_{i,j\geq 0} E^-(-\alpha_1,-i)\YM{-2k+i-j}E^+(-\alpha_1,j),
\end{align*}
where $\rho=2\Lambda_0(\pi_{(0)}(x_{\alpha_1}))^{2}P_{\alpha_1,\alpha_1}(1)/\Lambda_0(\pi_{(0)}(x_{-\alpha_1}))$, which is a complex number. 
Thus, we have $R_+(-2k)\equiv_{1}0$.
By Example \ref{firstexpansionex} and Proposition \ref{forbhantei} (in this case, $r^{\pm}_i=0$ for $i\geq 1$), 
$(-k,-k)$ is a forbidden pattern.
The other cases are similar.

\subsection{Forbiddenness of $(\cint{-1-3k},-3k)$ and $(-1-3k,\cint{-3k})$}\label{fortwoA}
This follows from
expansions 
below, Proposition \ref{forbhantei} and $R_{1}(-6k-1)=R_{2}(-6k-1)=0$.
{\footnotesize
\begin{align*}
{} &{}  R_1(-6k-1)/(-3(1+2\omega)) \\ 
&\equiv_1 \YM{-1-3k}\YP{-3k} -(1+\omega)\YP{-1-3k}\YM{-3k} + \omega\YM{-2-3k}\YP{1-3k}+\cdots\\
&+ \frac{1}{3}E^-(-1)((2+\omega)\YP{-3k}\YM{-3k} + (-1+\omega)\YM{-1-3k}\YP{1-3k} - (1+2\omega)\YP{-1-3k}\YM{1-3k}+\cdots)\\
&+ \cdots\\
&- \frac{1}{3}((-1+\omega)\YP{-1-3k}\YM{-1-3k} -(1+2\omega) \YM{-2-3k}\YP{-3k}+(2+\omega)\YP{-2-3k}\YM{-3k}+\cdots)E^+(1)\\
&- \cdots,\\
&{} \\
{} &{}  (R_1(-6k-1)+R_2(-6k-1))/(-9) \\ 
&\equiv_1 \YP{-1-3k}\YM{-3k} -\YM{-2-3k}\YP{1-3k} -\YP{-2-3k}\YM{1-3k}+\cdots\\
&+ \frac{1}{3}E^-(-1)(-\YP{-3k}\YM{-3k} -\YM{-1-3k}\YP{1-3k} + 2\YP{-1-3k}\YM{1-3k}+\cdots)\\
&+ \cdots\\
&- \frac{1}{3}(-\YP{-1-3k}\YM{-1-3k} + 2\YM{-2-3k}\YP{-3k} -\YP{-2-3k}\YM{-3k}+\cdots)E^+(1)\\
&- \cdots.
\end{align*}
\normalsize}

Here, $-3(1+2\omega)=C_1(1-\omega^{2(-3k-1)})$, where
$C_1=P_{\alpha_2,-\alpha_1}(1)=P_{\alpha_1,\alpha_1+\alpha_2}(1)(=-3\omega)$ by Remark \ref{koukan} and Example \ref{exppoly2}.
Similarly, we see that $R_2(-6k-1)$ begins with (modulo $\equiv_1$)
$C_2(1-\omega^{1(-3k-1)})\YM{-1-3k}\YP{-3k}$, where $C_2=P_{-(\alpha_1+\alpha_2),-\alpha_1}(1)=P_{\alpha_1,-\alpha_2}(1)(=3(1+\omega))$. Because of
$C_1(1-\omega^{2(-3k-1)})+C_2(1-\omega^{1(-3k-1)})=0$,
$R_1(-6k-1)+R_2(-6k-1)$ begins with
\begin{align*}
(C_1(\omega^{-3k-1}-1)+C_2(\omega^{2(-3k-1)}-1))\YP{-1-3k}\YM{-3k}=-9\YP{-1-3k}\YM{-3k}.
\end{align*}
Throughout this section, we do not explain such routine calculations.

\subsection{Forbiddenness of $(-1-3k,\cint{-1-3k})$ and $(\cint{-2-3k},-3k)$}\label{fortwoB}
This follows from
expansions of $R_{1}(-6k-2)$ and $R_{2}(-6k-2)$
below and Proposition \ref{forbhantei}.
{\footnotesize
\begin{align*}
{} &{}  R_1(-6k-2)/(-3(2+\omega)) \\ 
&\equiv_1 \YP{-1-3k}\YM{-1-3k} + \omega \YM{-2-3k}\YP{-3k} -(1+\omega)\YP{-2-3k}\YM{-3k} +\cdots\\
&+ \frac{1}{3}E^-(-1)((1+2\omega)\YM{-1-3k}\YP{-3k} + (1-\omega)\YP{-1-3k}\YM{-3k} -(2+\omega)\YM{-2-3k}\YP{1-3k}+\cdots)\\
&+ \cdots\\
&- \frac{1}{3}((1-\omega)\YM{-2-3k}\YP{-1-3k} -(2+\omega)\YP{-2-3k}\YM{-1-3k} + (1+2\omega)\YM{-3-3k}\YP{-3k}+\cdots)E^+(1)\\
&- \cdots,\\
&{} \\
{} &{}  (R_1(-6k-2)-(1+\omega)R_2(-6k-2))/(-9\omega) \\ 
&\equiv_1 \YM{-2-3k}\YP{-3k} -\YP{-2-3k}\YM{-3k} -\YM{-3-3k}\YP{1-3k}+\cdots \\
&+ \frac{1}{3}E^-(-1)(2\YM{-1-3k}\YP{-3k} -\YP{-1-3k}\YM{-3k} -\YM{-2-3k}\YP{1-3k}+\cdots)\\
&+ \cdots\\
&- \frac{1}{3}(-\YM{-2-3k}\YP{-1-3k} -\YP{-2-3k}\YM{-1-3k} +2\YM{-3-3k}\YP{-3k}+\cdots)E^+(1)\\
&- \cdots.
\end{align*}
\normalsize}

\subsection{Forbiddenness of $(-2-3k,\cint{-2-3k})$ and $(\cint{-3-3k},-1-3k)$}\label{fortwoC}
This follows from
expansions of $R_{1}(-6k-4)$ and $R_{2}(-6k-4)$
below and Proposition \ref{forbhantei}.
{\footnotesize
\begin{align*}
{} &{}  R_1(-6k-4)/(3(2+\omega)) \\ 
&\equiv_1 \YP{-2-3k}\YM{-2-3k} + \omega \YM{-3-3k}\YP{-1-3k} -(1+\omega) \YP{-3-3k}\YM{-1-3k}+\cdots\\
&+ \frac{1}{3}E^-(-1)((-1+\omega)\YM{-2-3k}\YP{-1-3k} + (2+\omega)\YP{-2-3k}\YM{-1-3k} -(1+2\omega)\YM{-3-3k}\YP{-3k}+\cdots)\\
&+ \cdots\\
&- \frac{1}{3}(-(1+2\omega)\YM{-3-3k}\YP{-2-3k} + (-1+\omega)\YP{-3-3k}\YM{-2-3k} + (2+\omega)\YM{-4-3k}\YP{-1-3k}+\cdots)E^+(1)\\
&- \cdots,\\
&{} \\
{} &{}  (R_1(-6k-4)-(1+\omega)R_2(-6k-4))/(9\omega) \\ 
&\equiv_1 \YM{-3-3k}\YP{-1-3k} -\YP{-3-3k}\YM{-1-3k} -\YM{-4-3k}\YP{-3k}+\cdots\\
&+ \frac{1}{3}E^-(-1)(\YM{-2-3k}\YP{-1-3k} + \YP{-2-3k}\YM{-1-3k} -2\YM{-3-3k}\YP{-3k}+\cdots)\\
&+ \cdots\\
&- \frac{1}{3}(-2\YM{-3-3k}\YP{-2-3k} + \YP{-3-3k}\YM{-2-3k} + \YM{-4-3k}\YP{-1-3k}+\cdots)E^+(1)\\
&- \cdots.
\end{align*}
\normalsize}

\subsection{Forbiddenness of $(\cint{-3-3k},-2-3k)$ and $(-3-3k,\cint{-2-3k})$}\label{fortwoD}
This follows from
expansions of $R_{1}(-6k-5)$ and $R_{2}(-6k-5)$
below and Proposition \ref{forbhantei}.
{\footnotesize
\begin{align*}
{} &{}  R_1(-6k-5)/(3(1+2\omega)) \\ 
&\equiv_1 \YM{-3-3k}\YP{-2-3k} -(1+\omega)\YP{-3-3k}\YM{-2-3k} + \omega \YM{-4-3k}\YP{-1-3k}+\cdots\\
&+ \frac{1}{3}E^-(-1)((1-\omega)\YP{-2-3k}\YM{-2-3k} + (1+2\omega)\YM{-3-3k}\YP{-1-3k} -(2+\omega)\YP{-3-3k}\YM{-1-3k}+\cdots)\\
&+ \cdots\\
&- \frac{1}{3}(-(2+\omega)\YP{-3-3k}\YM{-3-3k} + (1-\omega)\YM{-4-3k}\YP{-2-3k} + (1+2\omega)\YP{-4-3k}\YM{-2-3k}+\cdots)E^+(1)\\
&- \cdots,
\end{align*}
\begin{align*}
{} &{}  (R_1(-6k-5)+R_2(-6k-5))/9 \\ 
&\equiv_1 \YP{-3-3k}\YM{-2-3k} -\YM{-4-3k}\YP{-1-3k} -\YP{-4-3k}\YM{-1-3k}+\cdots\\
&+ \frac{1}{3}E^-(-1)(\YP{-2-3k}\YM{-2-3k} -2\YM{-3-3k}\YP{-1-3k} + \YP{-3-3k}\YM{-1-3k}+\cdots)\\
&+ \cdots\\
&- \frac{1}{3}(\YP{-3-3k}\YM{-3-3k} + \YM{-4-3k}\YP{-2-3k} -2\YP{-4-3k}\YM{-2-3k}+\cdots)E^+(1)\\
&- \cdots.
\end{align*}
\normalsize}

\subsection{Forbiddenness of $(-3-3k,\cint{-3-3k},\cint{-1-3k})$ and $(-5-3k,-3-3k,\cint{-3-3k})$}\label{forthree}
Using Corollary \ref{idounan},
we see that 
\begin{align*}
Z_1(k)=\YP{-3-3k}R_{-}(-4-6k)/3-(R_1(-6k-5)+R_2(-6k-5))\YM{-2-3k}/9
\end{align*}
is expanded as follows.
{\tiny
\begin{align*}
{} &{} Z_1(k)\\
&\equiv_2 \YP{-3-3k}\YM{-3-3k}\YM{-1-3k} + \YM{-4-3k}\YM{-2-3k}\YP{-1-3k} -\YM{-4-3k}\YP{-2-3k}\YM{-1-3k}+\cdots\\
&+ \frac{1}{3}E^-(-1)(-\YP{-2-3k}\YM{-2-3k}\YM{-2-3k} + 2\YM{-3-3k}\YM{-2-3k}\YP{-1-3k} -\YP{-3-3k}\YM{-2-3k}\YM{-1-3k}+\cdots)\\
&+ \cdots\\
&- \frac{1}{3}(-\YP{-3-3k}\YM{-3-3k}\YM{-2-3k} -\YM{-4-3k}\YP{-2-3k}\YM{-2-3k} + 2\YP{-4-3k}\YM{-2-3k}\YM{-2-3k}+\cdots)E^+(1)\\
&- \cdots.
\end{align*}
\normalsize}

The element $(-3-3k,\cint{-3-3k},\cint{-1-3k})$ is a forbidden pattern
because of $Z_1(k)\equiv_2 0$ and that any $\YY$-monomial $\boldsymbol{m}$ appearing in the coefficient of $E^+(i)$ satisfies 
$(-3-3k,\cint{-3-3k},\cint{-1-3k})\LEX\boldsymbol{m}$ (see Proposition \ref{forbhantei}). 

Similarly, using Corollary \ref{idounan}, we see that 
\begin{align*}
Z_2(k)=R_{+}(-8-6k)\YM{-3-3k}/3+\YP{-4-3k}(R_1(-7-6k)+R_2(-7-6k))/9
\end{align*}
is expanded as follows.
{\tiny
\begin{align*}
{} &{} Z_2(k)\\
&\equiv_2 \YP{-5-3k}\YP{-3-3k}\YM{-3-3k} + \YM{-5-3k}\YP{-4-3k}\YP{-2-3k} -\YP{-5-3k}\YM{-4-3k}\YP{-2-3k}+\cdots\\
&+ \frac{1}{3}E^-(-1)(\YP{-4-3k}\YP{-3-3k}\YM{-3-3k} + \YP{-4-3k}\YM{-4-3k}\YP{-2-3k} -2\YP{-4-3k}\YP{-4-3k}\YM{-2-3k}+\cdots)\\
&+ \cdots\\
&- \frac{1}{3}(\YP{-4-3k}\YP{-4-3k}\YM{-4-3k} -2\YM{-5-3k}\YP{-4-3k}\YP{-3-3k} + \YP{-5-3k}\YP{-4-3k}\YM{-3-3k}+\cdots)E^+(1)\\
&- \frac{1}{3}(-2\YM{-5-3k}\YP{-4-3k}\YP{-4-3k} + \YP{-5-3k}\YP{-4-3k}\YM{-4-3k} + \YM{-6-3k}\YP{-4-3k}\YP{-3-3k}+\cdots)E^+(2)\\
&- \cdots.
\end{align*}
\normalsize}

By applying 
\begin{align*}
0\equiv_1 R_+(-8-6k)/3\equiv_1
\YP{-4-3k}\YP{-4-3k}+2\YP{-5-3k}\YP{-3-3k}+\cdots,
\end{align*}
to the coefficient of $E^+(1)$ in $Z_2(k)$, we see that $Z_2(k)\equiv_2 Z'_2(k)$, where
{\tiny
\begin{align*}
{} &{} Z'_2(k)\\
&\equiv_2 \YP{-5-3k}\YP{-3-3k}\YM{-3-3k} + \YM{-5-3k}\YP{-4-3k}\YP{-2-3k} -\YP{-5-3k}\YM{-4-3k}\YP{-2-3k}+\cdots\\
&+ \frac{1}{3}E^-(-1)(\YP{-4-3k}\YP{-3-3k}\YM{-3-3k} + \YP{-4-3k}\YM{-4-3k}\YP{-2-3k} -2\YP{-4-3k}\YP{-4-3k}\YM{-2-3k}+\cdots)\\
&+ \cdots\\
&- \frac{1}{3}(-2\YM{-5-3k}\YP{-4-3k}\YP{-3-3k}-2\YP{-5-3k}\YM{-4-3k}\YP{-3-3k} + \YP{-5-3k}\YP{-4-3k}\YM{-3-3k}+\cdots)E^+(1)\\
&- \frac{1}{3}(-2\YM{-5-3k}\YP{-4-3k}\YP{-4-3k} + \YP{-5-3k}\YP{-4-3k}\YM{-4-3k} + \YM{-6-3k}\YP{-4-3k}\YP{-3-3k}+\cdots)E^+(2)\\
&- \cdots.
\end{align*}
\normalsize}

By applying the same relation to the coefficient of $E^+(2)$ and
by applying 
\begin{align*}
0\equiv_1 R_+(-7-6k)/6\equiv_1
\YP{-4-3k}\YP{-3-3k}+\YP{-5-3k}\YP{-2-3k}+\cdots,
\end{align*}
to the coefficient of $E^+(1)$ in $Z'_2(k)$ in preparation for \S\ref{forfour}, we see that $Z'_2(k)\equiv_2 Z''_2(k)$, where
{\tiny
\begin{align*}
{} &{} Z''_2(k)\\
&\equiv_2 \YP{-5-3k}\YP{-3-3k}\YM{-3-3k} + \YM{-5-3k}\YP{-4-3k}\YP{-2-3k} -\YP{-5-3k}\YM{-4-3k}\YP{-2-3k}+\cdots\\
&+ \frac{1}{3}E^-(-1)(\YP{-4-3k}\YP{-3-3k}\YM{-3-3k} + \YP{-4-3k}\YM{-4-3k}\YP{-2-3k} -2\YP{-4-3k}\YP{-4-3k}\YM{-2-3k}+\cdots)\\
&+ \cdots\\
&- \frac{1}{3}(-2\YP{-5-3k}\YM{-4-3k}\YP{-3-3k}+\YP{-5-3k}\YP{-4-3k}\YM{-3-3k} + 2\YP{-5-3k}\YM{-5-3k}\YP{-2-3k}+\cdots)E^+(1)\\
&- \frac{1}{3}(\YP{-5-3k}\YP{-4-3k}\YM{-4-3k} + 4\YP{-5-3k}\YM{-5-3k}\YP{-3-3k} + \YM{-6-3k}\YP{-4-3k}\YP{-3-3k}+\cdots)E^+(2)\\
&- \cdots.
\end{align*}
\normalsize}

The element $(-5-3k,-3-3k,\cint{-3-3k})$ is a forbidden pattern
because of $Z'_2(k)\equiv_2 Z_2(k)\equiv_2 0$ and that any $\YY$-monomial $\boldsymbol{m}$ appearing in the coefficient of $E^+(i)$ in $Z'_2(k)$ satisfies 
$(-5-3k,-3-3k,\cint{-3-3k})\LEX\boldsymbol{m}$ (see Proposition \ref{forbhantei}). 

\subsection{Forbiddenness of $(\cint{-5-3k},-4-3k,-2-3k,\cint{-1-3k})$}\label{forfour}
By applying 
\begin{align*}
0\equiv_1 R_-(-5-6k)/6
\equiv_1 \YM{-3-3k}\YM{-2-3k}+\YM{-4-3k}\YM{-1-3k}+\cdots,
\end{align*}
to the coefficient of $E^+(1)$ in $Z_1(k)$, we see that $Z_1(k)\equiv_2 Z'_1(k)$, where
{\tiny
\begin{align*}
{} &{} Z'_1(k)\\
&\equiv_2 \YP{-3-3k}\YM{-3-3k}\YM{-1-3k} + \YM{-4-3k}\YM{-2-3k}\YP{-1-3k} -\YM{-4-3k}\YP{-2-3k}\YM{-1-3k}+\cdots\\
&+ \frac{1}{3}E^-(-1)(-\YP{-2-3k}\YM{-2-3k}\YM{-2-3k} + 2\YM{-3-3k}\YM{-2-3k}\YP{-1-3k} -\YP{-3-3k}\YM{-2-3k}\YM{-1-3k}+\cdots)\\
&+ \cdots\\
&- \frac{1}{3}(-\YM{-4-3k}\YP{-2-3k}\YM{-2-3k} +2\YP{-4-3k}\YM{-2-3k}\YM{-2-3k} + 4\YM{-4-3k}\YP{-3-3k}\YM{-1-3k}+\cdots)E^+(1)\\
&- \frac{1}{3}(-\YM{-4-3k}\YP{-3-3k}\YM{-2-3k} -\YP{-4-3k}\YM{-3-3k}\YM{-2-3k} + \YP{-4-3k}\YM{-4-3k}\YM{-1-3k}+\cdots)E^+(2)\\
&- \cdots.
\end{align*}
\normalsize}
	
Again, using Corollary \ref{idounan}, we see that
\begin{align*}
Z_3(k)=Z''_2(k)\YM{-1-3k}-\YP{-5-3k}Z'_1(k) 
\end{align*}
is expanded as follows.
{\tiny
\begin{align*}
{} &{} Z_3(k)\\
&\equiv_3 \YM{-5-3k}\YP{-4-3k}\YP{-2-3k}\YM{-1-3k} -\YP{-5-3k}\YM{-4-3k}\YM{-2-3k}\YP{-1-3k} 
+\cdots\\
&+ \frac{1}{3}E^-(-1)(\YP{-4-3k}\YP{-3-3k}\YM{-3-3k}\YM{-1-3k} + \YP{-4-3k}\YM{-4-3k}\YP{-2-3k}\YM{-1-3k} 
+\cdots)\\
&+ \cdots\\
&- \frac{1}{3}(\YP{-5-3k}\YM{-4-3k}\YP{-2-3k}\YM{-2-3k} -2\YP{-5-3k}\YP{-4-3k}\YM{-2-3k}\YM{-2-3k} 
+\cdots)E^+(1)\\
&- \frac{1}{3}(\YP{-5-3k}\YM{-4-3k}\YP{-3-3k}\YM{-2-3k} + \YP{-5-3k}\YP{-4-3k}\YM{-3-3k}\YM{-2-3k} 
+\cdots)E^+(2)\\
&- \cdots.
\end{align*}
\normalsize}

The element $(\cint{-5-3k},-4-3k,-2-3k,\cint{-1-3k})$ is a forbidden pattern
because of $Z_3(k)\equiv_3 0$ and that any $\YY$-monomial $\boldsymbol{m}$ appearing in the coefficient of $E^+(i)$ satisfies 
$(\cint{-5-3k},-4-3k,-2-3k,\cint{-1-3k})\LEX\boldsymbol{m}$ (see Proposition \ref{forbhantei}).

\section{An automatic derivation of $q$-difference equations via ~\cite{TT}}\label{auto}
Let $\Sigma$ be a nonempty finite set, called an alphabet in the context of formal language theory.
We denote by $\Sigma^\ast$ the set of finite words $\bigsqcup_{n\geq 0}\Sigma^n$
of $\Sigma$. A word $(w_1,\dots,w_n)\in\Sigma^n$ of length $n$ is written as $w_1\cdots w_n$.
By the word concatenation and the empty word $\EMPTYWORD$,
we regard the set $\Sigma^{\ast}$ as a free monoid generated by $\Sigma$.
For 
$X,Y\subseteq\Sigma^{\ast}$, we put
$XY=\{wv\mid w\in X,v\in Y\}$.

\begin{Def}
A deterministic finite automaton (DFA, for short) over $\Sigma$
is a 5-tuple $(Q,\Sigma,\delta,s,F)$, where $Q$ is a finite set (called the set of states),
$\delta:Q\times \Sigma\to Q$ is a function (called the transition function), $s$ is an element of $Q$ (called
the start state) and $F$ is a subset of $Q$ (called the set of accept states).
\end{Def}

For a DFA $M=(Q,\Sigma,\delta,s,F)$, the language recognized by $M$ is defined as
\begin{align*}
L(M)=\{w\in\Sigma^{\ast}\mid \EXTE{\delta}(s,w)\in F\},
\end{align*}
where $\EXTE{\delta}(q,w)$ is defined inductively by $\EXTE{\delta}(q,\EMPTYWORD)=q$ and 
$\EXTE{\delta}(q,w_1v)=\EXTE{\delta}(\delta(q,w_1),v)$ for $q\in Q, w_1\in\Sigma$ and $w,v\in\Sigma^{\ast}$. 






We denote the set of 2-colored partitions (see \S\ref{mainse}) by $\TPAR$.
Let $I=\{\mya,\myb,\dots,\mym\}$ and
consider the map $\pi:I\to \TPAR$ defined by (see also Example \ref{exbipar})
\begin{align*}
\mya\mapsto\emptypar,\quad
\myb\mapsto(1),\quad
\myc\mapsto(\cint{1}),\quad
\myd\mapsto(2),\quad
\mye\mapsto(\cint{2}),\quad
\myf\mapsto(3),\quad
\myg\mapsto(\cint{3}),\\
\myh\mapsto(2,\cint{1}),\quad
\myi\mapsto(\cint{2},1),\quad
\myj\mapsto(3,1),\quad
\myk\mapsto(3,\cint{1}),\quad
\myl\mapsto(\cint{3},\cint{1}),\quad
\mym\mapsto(3,\cint{3}).
\end{align*}

Let $\SEQ(I)=I^{\mathbb{Z}_{\geq 1}}(=\{(i_1,i_2,\dots)\mid \textrm{$i_j\in I$ for $j\geq 1$}\})$. We denote by
$\SEQ(I,\pi)$ the subset of $\SEQ(I)$,
which consists of $\boldsymbol{i}=(i_1,i_2,\dots)$ 
such that
$\{k\geq 1\mid i_k\ne\mya\}$ is a finite set. For a nonnegative integer $t$, we define
$\SHIFT_t(\lambda)=(\lambda_1+t,\dots,\lambda_{\ell}+t)$ (see Corollary \ref{lexcomp4})
for a 2-colored partition $\lambda=(\lambda_1,\dots,\lambda_{\ell})$.
Then, the map 
\begin{align*}
\PAI:\SEQ(I,\pi)\to \TPAR, 
\end{align*}
which sends $\boldsymbol{i}=(i_1,i_2,\dots)\in \SEQ(I,\pi)$ to the smallest (with respect to the length)
2-colored partition that contains $\SHIFT_{3k-3}(\pi(i_k))$ for any $k\geq 1$ such that $i_k\ne\mya$
is well-defined (see ~\cite[Definition 2.5.(2)]{TT}). 
It is clearly an injection. 

\begin{Ex}
$\PAI(\boldsymbol{i})=(\cint{11},10,\cint{8},3,\cint{3})$ for $\boldsymbol{i}=(\mym,\mya,\mye,\myi,\mya,\mya,\dots)$.
\end{Ex}

The translation result below is verified elementarily,
and we omit a proof.

\begin{Prop}\label{iikae}
For $\boldsymbol{i}\in\SEQ(I,\pi)$, the condition $\PAI(\boldsymbol{i})\in\BIR$ is equivalent to
the condition that $\boldsymbol{i}$ does not contain any of the word in $J$, where
\begin{align*}
J &= \{\myf\myb,\myg\myb,\myj\myb,\myk\myb,\myl\myb,\mym\myb,\myl\myc,\mym\myc,\myj\myc,\myk\myc,\myf\myc,\myg\myc,
\mym\myd,\myf\mye,\myj\mye,\myk\mye,\mym\mye,\myh\myi,\myf\myi,\myg\myi,\myj\myi,\myk\myi,\myl\myi,\mym\myi,\\
&\quad\quad \myf\myh,\myg\myh,\myj\myh,\myk\myh,\myl\myh,\mym\myh,\myf\myj,\myg\myj,\myj\myj,\myk\myj,\myl\myj,\mym\myj,
\myf\myk,\myg\myk,\myj\myk,\myk\myk,\myl\myk,\mym\myk,\myf\myl,\myg\myl,\myj\myl,\myk\myl,\myl\myl,\mym\myl
\}.
\end{align*}
Moreover, any $\lambda\in\BIR$ is obtained in this way (i.e., $\lambda=\PAI(\boldsymbol{i})$ for some 
$\boldsymbol{i}\in\SEQ(I,\pi)$).
\end{Prop}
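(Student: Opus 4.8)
The plan is to read both assertions off the \emph{block decomposition} underlying $\PAI$. First I would note that for $\boldsymbol{i}=(i_1,i_2,\dots)\in\SEQ(I,\pi)$ the partition $\PAI(\boldsymbol{i})$ is simply the ordered concatenation of the shifted blocks $\SHIFT_{3k-3}(\pi(i_k))$: each $\pi(i_k)$ has parts of content in $\{1,2,3\}$, so the $k$-th block occupies the content window $\{3k-2,3k-1,3k\}$; these windows are pairwise disjoint and correctly nested (the largest content in window $k$ is $3k$, the smallest in window $k+1$ is $3k+1$). Hence the length-minimal $2$-colored partition containing every block as a factor is exactly their concatenation, and conversely any $2$-colored partition $\lambda$ is reconstructed from the tuple of its content-window blocks $B_k=(\lambda_j : \CONT(\lambda_j)\in\{3k-2,3k-1,3k\})$.

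Next I would isolate the \emph{locality} of (D1)--(D3): every clause constrains only consecutive parts (or short factors) whose contents lie in a window of at most five consecutive values, hence inside $\{3k-2,\dots,3k+2\}$, i.e.\ within one block or across two \emph{consecutive} blocks $B_k,B_{k+1}$. In particular, if a block lying between two nonempty blocks is empty, the content gap between the surviving neighbours is at least $4$ and no (D1)--(D3) pattern can straddle it; so a violation can occur only within a single block or between two genuinely adjacent nonempty blocks, and the latter correspond precisely to two adjacent non-$\mya$ letters of $\boldsymbol{i}$. This reduces the proposition to two finite checks, made uniform in $k$ by the $3$-periodicity of (D1)--(D3).

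The within-block check is to enumerate the $2$-colored partitions with contents in $\{1,2,3\}$ obeying (D1)--(D3) and verify that they are exactly $\pi(\mya),\dots,\pi(\mym)$; for instance $(3,2)$ and $(\cint{3},1)$ are excluded by (D1) and (D2), and $(3,\cint{3},\cint{1})$ by (D3), leaving the thirteen listed blocks. This yields surjectivity at once: for $\lambda\in\BIR$ each block $B_k$ is one of these thirteen, so $B_k=\SHIFT_{3k-3}(\pi(i_k))$ for a unique $i_k\in I$ and $\PAI(i_1,i_2,\dots)=\lambda$; it also shows that for any $\boldsymbol{i}\in\SEQ(I,\pi)$ the within-block part of (D1)--(D3) is automatic.

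Finally, the cross-block check: for each ordered pair $(X,Y)$ of letters I would display the boundary of $\PAI$ of the word $XY$, namely (taking the representative $k=1$) the factor $\SHIFT_3(\pi(Y))$ followed by $\pi(X)$, since the block $Y$ carries the larger contents and is read first, and then test (D1)--(D3) on the parts straddling the boundary. One checks that the pairs producing a violation are exactly the words of $J$, each traceable to its clause: e.g.\ $\myf\myb$ to the (D1) failure of $(3k+1,3k)$, $\myf\mye$ to the (D2) failure of $(\cint{3k+2},3k)$, and $\myh\myi$ to the (D3) pattern $(\cint{3k+2},3k+1,3k-1,\cint{3k-2})$. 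Together with locality this gives $\PAI(\boldsymbol{i})\in\BIR$ iff $\boldsymbol{i}$ avoids $J$. The only real obstacle is this cross-block enumeration: one must keep the orientation straight (which block holds the larger contents), track whether the middle content $3k$ is present or absent --- this is what decides whether the length-$4$ pattern of (D3) can form --- and confirm that no pair outside $J$ slips through; the locality reduction and $3$-periodicity are exactly what turn this into a bounded, mechanical verification.
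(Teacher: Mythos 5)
Your proposal is correct and is essentially the paper's own argument: the paper gives no written proof (it simply states that the proposition ``is verified elementally''), and your block decomposition of $\PAI$, the locality reduction of (D1)--(D3) to a single block or two adjacent nonempty blocks, and the two finite checks (the thirteen admissible blocks $\pi(\mya),\dots,\pi(\mym)$, then the adjacent-pair enumeration) constitute exactly that elementary verification, with the orientation (the $Y$-block shifted up and placed first) handled correctly. The enumeration itself checks out, e.g.\ the words of $J$ ending in $\myb,\myc,\myd,\mye,\myh,\myi,\myj,\myk,\myl$ number $6,6,1,4,6,7,6,6,6$ respectively and none end in $\mya,\myf,\myg,\mym$, matching your (D1)/(D2)/(D3) case analysis at the block boundary.
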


Let $M=(Q,I,\delta,s,F)$ be the minimum DFA (with respect to the number of states)
over $I$ such that $L(M)=I^{\ast}JI^{\ast}$. 
It is computable by the standard algorithms in formal language theory (see ~\cite[Appendix A]{TT} for a review).
By running them (see Example \ref{gap} below), we see automatically that $Q=\{\JOO{0},\dots,\JOO{5}\}$, $s=\JOO{0}$, $F=\{\JOO{1}\}$ and that $\delta(q,j)$ for 
$q\in Q, j\in I$ is given by the table below.
\begin{center}
\begin{tabular}{r|rrrrrrrrrrrrr}
$q\backslash j$ & $\mya$ & $\myb$ & $\myc$ & $\myd$ & $\mye$ & $\myf$ & $\myg$ & $\myh$ & $\myi$ & $\myj$ & $\myk$ & $\myl$ & $\mym$ \\ \hline
\JO{0} & \JO{0} & \JO{0} & \JO{0} & \JO{0} & \JO{0} & \JO{2} & \JO{4} & \JO{5} & \JO{0} & \JO{2} & \JO{2} & \JO{4} & \JO{3} \\ 
\JO{1} & \JO{1} & \JO{1} & \JO{1} & \JO{1} & \JO{1} & \JO{1} & \JO{1} & \JO{1} & \JO{1} & \JO{1} & \JO{1} & \JO{1} & \JO{1} \\ 
\JO{2} & \JO{0} & \JO{1} & \JO{1} & \JO{0} & \JO{1} & \JO{2} & \JO{4} & \JO{1} & \JO{1} & \JO{1} & \JO{1} & \JO{1} & \JO{3} \\ 
\JO{3} & \JO{0} & \JO{1} & \JO{1} & \JO{1} & \JO{1} & \JO{2} & \JO{4} & \JO{1} & \JO{1} & \JO{1} & \JO{1} & \JO{1} & \JO{3} \\ 
\JO{4} & \JO{0} & \JO{1} & \JO{1} & \JO{0} & \JO{0} & \JO{2} & \JO{4} & \JO{1} & \JO{1} & \JO{1} & \JO{1} & \JO{1} & \JO{3} \\ 
\JO{5} & \JO{0} & \JO{0} & \JO{0} & \JO{0} & \JO{0} & \JO{2} & \JO{4} & \JO{5} & \JO{1} & \JO{2} & \JO{2} & \JO{4} & \JO{3} 
\end{tabular}
\end{center}

\begin{Ex}\label{gap}
An execution for obtaining the minimum DFA using
a GAP package Automata~\cite{DHLM} is as follows.

\begin{small}
\noindent\verb|A:=RationalExpression("fbUgbUjbUkbUlbUmbUlcUmcUjcUkcUfcUgcUmdUfeUjeUkeUmeUhiUfiUgiUjiUkiUliU|

\noindent\verb|miUfhUghUjhUkhUlhUmhUfjUgjUjjUkjUljUmjUfkUgkUjkUkkUlkUmkUflUglUjlUklUllUml","abcdefghijklm");|

\noindent\verb|Is:=RationalExpression("(aUbUcUdUeUfUgUhUiUjUkUlUm)*","abcdefghijklm");|

\noindent\verb|r:=ProductRatExp(Is,ProductRatExp(A,Is)); M:=RatExpToAut(r); Display(M);|
\end{small}
\end{Ex}

For each $q\in Q\setminus F(=\{\JOO{0},\JOO{2},\JOO{3},\JOO{4},\JOO{5}\})$, we define
\begin{align*}
\LC{q}=\PAI(\SEQ(I,\pi)\cap (L(M_q)^{c})^{\wedge}),
\end{align*}
where, for $A\subseteq I^{\ast}$, we put (see \cite[\S3.2]{TT})
\begin{align*}
A^{\wedge} = \{\boldsymbol{i}\in\SEQ(I)\mid i_1\cdots i_n\in A\textrm{ for all }n\geq 1\},
\end{align*}
and, for a DFA $M=(I,\Sigma,\delta,s,F)$ and $v\in I$, we put $M_v=(I,\Sigma,\delta,v,F)$, i.e.,
the DFA obtained from $M$ by changing the start state to $v$ (see \cite[Definition 3.11]{TT}).

\begin{Rem}
By Proposition \ref{iikae}, we have $\LC{q_0}=\BIR$.
Moreover, we accidentally have $\LC{q_2}=\BIRP$ thanks to the following two facts.
\begin{enumerate}
\item For $\boldsymbol{i}\in\SEQ(I,\pi)$ such that $\PAI(\boldsymbol{i})\in\BIR$, 
the condition $\PAI(\boldsymbol{i})\in\BIRP$ is equivalent to
the condition $i_1\ne \myb,\myc,\mye,\myh,\myi,\myj,\myk,\myl$, where $\boldsymbol{i}=(i_1,i_2,\dots)$.
\item For $t\in\Sigma$, the condition $\delta(\JOO{2},t)\not\in F$ is equivalent to
the condition $t\ne \myb,\myc,\mye,\myh,\myi,\myj,\myk,\myl$. Moreover, $\delta(\JOO{2},\mya)=\JOO{0}$.
\end{enumerate}
\end{Rem}

By ~\cite[Theorem 3.17]{TT}, we have 
\begin{align*}
f_{\LC{q}}(x,q)=\sum_{a\in I, \delta(q,a)\not\in F} x^{\ell(\pi(a))}q^{|\pi(a)|}f_{\LC{\delta(q,a)}}(xq^3,q).
\end{align*}
Putting $F_i(x)=f_{\LC{\JOO{i}}}(x,q)$, we have the simultaneous $q$-difference equation
\begin{align*}
\begin{pmatrix}
F_0(x) \\
F_2(x) \\
F_3(x) \\
F_4(x) \\
F_5(x)
\end{pmatrix}
=
\begin{pmatrix}
1+2xq+2xq^2+x^2q^3 & xq^3+2x^2q^4 & x^2q^6 & xq^3+x^2q^4 & x^2q^3 \\
1+xq^2             & xq^3        & x^2q^6 & xq^3        & 0      \\
1                  & xq^3        & x^2q^6 & xq^3        & 0      \\
1+2xq^2            & xq^3        & x^2q^6 & xq^3        & 0      \\
1+2xq+2xq^2        & xq^3+2x^2q^4 & x^2q^6 & xq^3+x^2q^4 & x^2q^3 \\
\end{pmatrix}
\begin{pmatrix}
F_0(xq^3) \\
F_2(xq^3) \\
F_3(xq^3) \\
F_4(xq^3) \\
F_5(xq^3)
\end{pmatrix}.
\end{align*}


By the Murray-Miller algorithm (see ~\cite[Appendix B]{TT}), we get
the following.




\begin{Prop}\label{PropqdiffR}
The generating functions $f_{\BIR}(x,q)$ and $f_{\BIRP}(x,q)$ satisfy the following $q$-difference equations respectively.

\tiny
\begin{align*}
{} &{} (2+3xq^{4}+xq^{6})f_{\BIR}(x,q)\\
&=
(2+4xq+4xq^{2}+4xq^{3}+3xq^{4}+xq^{6}+4x^{2}q^{3}
+6x^{2}q^{4}+6x^{2}q^{5}+8x^{2}q^{6}+2x^{2}q^{7}
+2x^{2}q^{8}+2x^{2}q^{9}+6x^{3}q^{7}+2x^{3}q^{9}+3x^{3}q^{10}+x^{3}q^{12})f_{\BIR}(xq^{3},q)\\
&- x^{2}q^{7}(2+2q+3xq+4xq^{2}+xq^{3}+4xq^{4}-xq^{5}+4xq^{6}
+xq^{7}+2x^{2}q^{5}+6x^{2}q^{7}+6x^{2}q^{8}+2x^{2}q^{9}+2x^{2}q^{10}+4x^{2}q^{11}+3x^{3}q^{9}
+x^{3}q^{11}\\
&+6x^{3}q^{12}+2x^{3}q^{14})f_{\BIR}(xq^{6},q)+ x^{4}q^{21}(1-xq^6)^2(2+3xq+xq^{3})f_{\BIR}(xq^{9},q),\\
{} &{} (1+xq^{5}+xq^{7})f_{\BIRP}(x,q) \\
&+(1+xq^{2}+2xq^{3}+2xq^{4}+2xq^{5}+xq^{7}+3x^{2}q^{6}+2x^{2}q^{7}+3x^{2}q^{8}+3x^{2}q^{9}
+2x^{2}q^{10}+x^{2}q^{11}+x^{2}q^{12}+2x^{3}q^{11}+2x^{3}q^{13}+x^{3}q^{14}+x^{3}q^{16})f_{\BIRP}(xq^{3},q)\\
&-x^{2}q^{10}(1+q+xq^{2}+xq^{3}+2xq^{4}+2xq^{6}+xq^{7}+xq^{8}-x^{2}q^{7}+2x^{2}q^{8}+x^{2}q^{9}
+2x^{2}q^{10}+3x^{2}q^{11}+x^{2}q^{12}+x^{2}q^{13}+2x^{2}q^{14}+x^{3}q^{13}+x^{3}q^{15}\\
&+2x^{3}q^{16}+2x^{3}q^{18})f_{\BIRP}(xq^{6},q)+x^{4}q^{27}(1-xq^6)(1-xq^9)(1+xq^{2}+xq^{4})f_{\BIRP}(xq^{9},q)
\end{align*}
\normalsize
\end{Prop}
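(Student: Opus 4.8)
The plan is to derive each of the two scalar $q$-difference equations from the simultaneous system displayed just above, by eliminating the auxiliary generating functions and then re-indexing the shift. All the ingredients are already assembled: Proposition \ref{iikae} recasts membership in $\BIR$ as avoidance of the forbidden words in $J$, the minimal DFA recognizing $I^{\ast}JI^{\ast}$ has been tabulated, and the identity \eqref{usefulidentity} yields the linear system expressing $(F_0,F_2,F_3,F_4,F_5)(x)$ in terms of $(F_0,F_2,F_3,F_4,F_5)(xq^3)$ through the explicit $5\times 5$ matrix. Since $f_{\BIR}(x,q)=F_0(x)$ and $f_{\BIRP}(x,q)=F_2(x)$ by the preceding Remark, it suffices to extract one scalar recurrence for $F_0$ and one for $F_2$.

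First I would feed the system into the Murray-Miller algorithm (reviewed in \cite[Appendix B]{TT}) to eliminate $F_2,F_3,F_4,F_5$; this is exactly the content of the first displayed computation, a third-order recurrence with coefficients rational in $x,q$ relating $F_0(xq^3)$, $F_0(x)$, $F_0(xq^{-3})$ and $F_0(xq^{-6})$ (here the term written $F_R(xq^{-3})$ is $F_0(xq^{-3})$). For the primed family I would interchange the first two rows and the first two columns so that $F_2$ occupies the pivot position, then run the same algorithm, obtaining the second displayed recurrence in $F_2(x),F_2(xq^{-3}),F_2(xq^{-6}),F_2(xq^{-9})$.

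It then remains to normalize these two rational recurrences into the polynomial form claimed. For $\BIR$ I would substitute $x\mapsto xq^6$, which sends the four shifts to $f_{\BIR}(x,q),f_{\BIR}(xq^3,q),f_{\BIR}(xq^6,q),f_{\BIR}(xq^9,q)$; the key observation is that all three denominators then share the common polynomial factor $2+3xq^4+xq^6$, the residual parts being pure powers of $q$. Multiplying through by this factor and absorbing the monomial denominators into the numerators produces the stated equation, with leading coefficient $2+3xq^4+xq^6$ and trailing coefficient $x^4q^{21}(1-xq^6)^2(2+3xq+xq^3)$. For $\BIRP$ I would instead substitute $x\mapsto xq^9$; now the three denominators share the factor $1+xq^5+xq^7$, and the same clearing-and-simplifying step yields the stated equation, whose extreme coefficients factor as $1+xq^5+xq^7$ and $x^4q^{27}(1-xq^6)(1-xq^9)(1+xq^2+xq^4)$.

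The conceptual content lies entirely in the automaton-to-system construction, which is already complete; the remaining difficulty is purely computational bookkeeping. The Murray-Miller elimination over the field of rational functions in $x,q$ involves sizeable determinantal cancellations, and the final denominator clearing must be carried out so that the bulky intermediate numerators condense exactly into the compact factored coefficients asserted. In practice I would confirm the outcome with computer algebra, checking that each scalar equation annihilates the series expansion of the corresponding generating function to high order in $q$.
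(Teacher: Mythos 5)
Your proposal is correct and follows essentially the same route as the paper: the automaton-derived $5\times 5$ system, Murray--Miller elimination for $F_0$ (and, after the row/column swap, for $F_2$), and then the shift substitutions $x\mapsto xq^6$ and $x\mapsto xq^9$ with clearing of the common denominator factors $2+3xq^4+xq^6$ and $1+xq^5+xq^7$. You even make explicit the final normalization step that the paper leaves implicit in its summary, and your reading of $F_R(xq^{-3})$ as $F_0(xq^{-3})$ is the intended one.
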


\section{The cylindric partitions}\label{cylin}

Let $r\geq 1$. Recall that
a profile $\boldsymbol{c}=(c_i)_{i\in\mathbb{Z}/r\mathbb{Z}}$ is a family of nonnegative integers (indexed by $\mathbb{Z}/r\mathbb{Z}$).
Recall also that a cylindric partition~\cite{GK} $\boldsymbol{\lambda}=(\lambda^{(i)})_{i\in\mathbb{Z}/r\mathbb{Z}}$ of 
a profile $\boldsymbol{c}=(c_i)_{i\in\mathbb{Z}/r\mathbb{Z}}$ is 
a family of partitions 
such that
$\lambda^{(i)}_j\geq \lambda^{(i+1)}_{j+c_{i+1}}$
for $i\in\mathbb{Z}/r\mathbb{Z}$ and $j\geq 1$. 
As usual, we put $\mu_k=0$ for a partition $\mu$ if $k>\ell(\mu)$.
We define
\begin{align*}
F_{\boldsymbol{c}}(x,q)=\sum_{\boldsymbol{\lambda}\in\CP{\boldsymbol{c}}}x^{\max(\boldsymbol{\lambda})}q^{\SUM{\boldsymbol{\lambda}}},\quad
G_{\boldsymbol{c}}(x,q)=(xq;q)_{\infty}F_{\boldsymbol{c}}(x,q),
\end{align*}
where $\CP{\boldsymbol{c}}$ be the set of cylindric partitions of profile 
$\boldsymbol{c}$, 
$\max(\boldsymbol{\lambda})=\max\{\lambda^{(i)}_1,\dots,\lambda^{(i)}_{\ell(\lambda^{(i)})}\mid i\in\mathbb{Z}/r\mathbb{Z}\}$ and
$\SUM{\boldsymbol{\lambda}}=\sum_{i\in\mathbb{Z}/r\mathbb{Z}}|\lambda^{(i)}|$
for a cylindric partition $\boldsymbol{\lambda}$.

\begin{Thm}[{\cite[Proposition 5.1]{Bor}}]\label{borp}
For a profile $\boldsymbol{c}=(c_i)_{i\in\mathbb{Z}/r\mathbb{Z}}$, we have
\begin{align*}
F_{\boldsymbol{c}}(1,q)=\frac{1}{(q;q)_{\infty}}\chi_{A^{(1)}_{r-1}}(\sum_{i\in\mathbb{Z}/r\mathbb{Z}}c_i\Lambda_i).
\end{align*}
\end{Thm}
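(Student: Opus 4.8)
The plan is to reproduce Borodin's free-fermionic (periodic Schur process) computation. First I would re-encode a cylindric partition $\boldsymbol{\lambda}\in\CP{\boldsymbol{c}}$ as a \emph{periodic} interlacing sequence of ordinary partitions: reading $\boldsymbol{\lambda}$ along the cyclic diagonals prescribed by the profile produces a cyclic word $\cdots\prec\mu\succ\cdots$ in which adjacent partitions differ by a horizontal strip, the pattern of $\prec$ and $\succ$ being dictated by $\boldsymbol{c}$ (which records $r$ partition slots together with the $k:=\sum_{i}c_i$ column shifts, so that the natural period is $T=r+k$; note $r=3$, $k=3$ here, matching the modulus $6$ pervading the paper). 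Because we set $x=1$, the statistic $\max(\boldsymbol{\lambda})$ is untracked and the sequence is genuinely periodic, so $F_{\boldsymbol{c}}(1,q)$ is exactly the partition function of the associated periodic Schur process, graded by $q^{|\boldsymbol{\lambda}|}$.

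Second, I would translate this partition function into a trace over the fermionic Fock space $\mathcal{F}$. Each horizontal-strip step contributes a half-vertex operator $\Gamma_{+}$ or $\Gamma_{-}$, the periodicity closes the resulting word into a trace, and the grading by $|\boldsymbol{\lambda}|$ is implemented by an insertion of $q^{d}$ (with $d$ the energy operator): schematically $F_{\boldsymbol{c}}(1,q)=\operatorname{Tr}_{\mathcal{F}}\!\big(q^{d}\,\Gamma_{\varepsilon_1}(\cdot)\cdots\Gamma_{\varepsilon_T}(\cdot)\big)$, with $\varepsilon_j\in\{+,-\}$ and the spectral arguments read off from $\boldsymbol{c}$.

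The core step, and the main obstacle, is the evaluation of this trace. Using the commutation rule $\Gamma_{+}(u)\Gamma_{-}(v)=(1-uv)^{-1}\Gamma_{-}(v)\Gamma_{+}(u)$ to push every $\Gamma_{+}$ past every $\Gamma_{-}$ around the cycle, together with the cyclicity of the trace and the geometric resummation for $\operatorname{Tr}(q^{d}\,\cdot)$, one obtains an explicit infinite product. The delicate points are bookkeeping the relative spectral parameters generated by the profile shifts and regularizing the doubly-infinite product produced by wrapping around the cycle; this is precisely Borodin's periodic Schur process evaluation and is where the real work lies.

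Finally I would match the resulting product with $\tfrac{1}{(q;q)_{\infty}}\,\chi_{A^{(1)}_{r-1}}(\sum_i c_i\Lambda_i)$. The trace factors as the empty trace $\operatorname{Tr}_{\mathcal{F}}(q^{d})=\tfrac{1}{(q;q)_{\infty}}$ --- the contribution of the Heisenberg (bosonic) modes --- times a theta-type quotient, and it remains to identify that quotient with the principal specialization of the integrable level-$k$ character. Applying the principal specialization to the Weyl--Kac character formula writes $\chi_{A^{(1)}_{r-1}}(\sum_i c_i\Lambda_i)$ as an infinite product via the affine Macdonald (numerator) identity, and a term-by-term comparison with the product obtained from the trace completes the proof.
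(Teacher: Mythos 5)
The paper offers no proof of this theorem --- it is quoted verbatim from Borodin (\cite[Proposition 5.1]{Bor}) --- and your outline faithfully reproduces Borodin's own argument: re-encoding a cylindric partition as a periodic sequence of interlacing partitions (period $T=r+k$), evaluating the resulting periodic Schur process partition function as a trace of half-vertex operators $\Gamma_{\pm}$ against $q^{d}$ over the fermionic Fock space, and then identifying the resulting infinite product with $\tfrac{1}{(q;q)_{\infty}}$ times the principally specialized character via the Weyl--Kac formula (Lepowsky's principal specialization), which is exactly the bridge needed to pass from Borodin's product form to the character form stated here. So your plan is the same approach as the cited source; the two places you flag as ``where the real work lies'' (the regularized trace evaluation with the profile-dependent spectral parameters, and the term-by-term product/character comparison) are indeed the only content left to execute, and both are carried out in \cite{Bor} and in the standard specialization formula of \cite[Chapter 10]{Kac}.
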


\begin{Thm}[{\cite[Proposition 3.1$+$(3.5)]{CW}}]\label{cwrec}
For a profile $\boldsymbol{c}=(c_i)_{i\in\mathbb{Z}/r\mathbb{Z}}$, we have
\begin{align*}
G_{\boldsymbol{c}}(x,q)=\sum_{\emptyset\ne J\subseteq I_{\boldsymbol{c}}}(-1)^{|J|-1}(xq;q)_{|J|-1}G_{\boldsymbol{c}(J)}(xq^{|J|}),
\end{align*}
where $I_{\boldsymbol{c}}=\{i\in\mathbb{Z}/r\mathbb{Z}\mid c_i>0\}$ and $\boldsymbol{c}(J)=(c_i(J))_{i\in\mathbb{Z}/r\mathbb{Z}}$ 
is defined by
\begin{align*}
c_i(J)=
\begin{cases}
c_i-1 & \textrm{if $i\in J$ and $(i-1)\not\in J$},\\
c_i+1 & \textrm{if $i\not\in J$ and $(i-1)\in J$},\\
c_i & \textrm{otherwise}.
\end{cases}
\end{align*}
\end{Thm}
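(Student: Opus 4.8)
The plan is to prove the equivalent, cleaner recursion for $F_{\boldsymbol{c}}$ and then recover the stated $G$-form by a routine $q$-Pochhammer manipulation. Since $G_{\boldsymbol{c}}(x,q)=(xq;q)_{\infty}F_{\boldsymbol{c}}(x,q)$, one has $G_{\boldsymbol{c}(J)}(xq^{|J|})=(xq^{|J|+1};q)_{\infty}F_{\boldsymbol{c}(J)}(xq^{|J|})$, and the factorization $(xq;q)_{|J|-1}(xq^{|J|+1};q)_{\infty}=(xq;q)_{\infty}/(1-xq^{|J|})$ turns the asserted identity, after dividing through by $(xq;q)_{\infty}$, into
\begin{align*}
F_{\boldsymbol{c}}(x,q)=\sum_{\emptyset\ne J\subseteq I_{\boldsymbol{c}}}\frac{(-1)^{|J|-1}}{1-xq^{|J|}}F_{\boldsymbol{c}(J)}(xq^{|J|},q).
\end{align*}
Thus it suffices to establish this form combinatorially, and the variable substitution $x\mapsto xq^{|J|}$ together with the geometric factor $(1-xq^{|J|})^{-1}$ already tells me what the weight bookkeeping should look like.

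First I would read off how the largest entry sits inside a cylindric partition. The interlacing $\lambda^{(i)}_{j}\ge\lambda^{(i+1)}_{j+c_{i+1}}$ forces the first-row values $\lambda^{(i)}_1$, read cyclically, to weakly decrease except that they may jump upward precisely at positions $i$ with $c_i>0$; hence $\max(\boldsymbol{\lambda})$ is always attained at such a source component, which is exactly why the index set $J$ is constrained to lie in $I_{\boldsymbol{c}}=\{i\mid c_i>0\}$. I would then fix a candidate set $J$ of source components and strip off the top layer of maximal entries carried by the runs of $J$ around the cycle. Removing a box at the left end of a run decreases the corresponding shift, while the box reappears at the right end of the run; this is exactly the rule $c_i(J)=c_i-1$ (run entered), $c_i+1$ (run exited), $c_i$ (otherwise), and it is level-preserving, matching $\sum_i c_i(J)=\sum_i c_i$. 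Under this stripping the profile becomes $\boldsymbol{c}(J)$, the largest part is unchanged so the $x$-exponent is preserved, and each of the $|J|$ stripped boxes contributes one unit of maximal height, which produces the substitution $x\mapsto xq^{|J|}$ and the geometric series $(1-xq^{|J|})^{-1}=\sum_{m\ge0}x^mq^{|J|m}$ recording how many times the maximal value is repeated before the recursion descends to profile $\boldsymbol{c}(J)$.

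The arithmetic of the signs is the crux. A single cylindric partition is compatible with several choices of $J$---namely whenever two or more source runs carry the common maximal value simultaneously---so the naive sum over $J$ overcounts, and the factor $(-1)^{|J|-1}$ is the M\"obius correction for this overlap. The step I expect to be the main obstacle is proving that these overcounts cancel exactly: concretely, one must exhibit a sign-reversing involution (or an equivalent inclusion-exclusion over the poset of admissible source sets) pairing the spurious contributions, and one must check that strip-removal is a well-defined bijection onto cylindric partitions of profile $\boldsymbol{c}(J)$ respecting the cyclic interlacing at the run endpoints, where the increment and decrement of the shift interact most delicately with the constraints $\lambda^{(i)}_j\ge\lambda^{(i+1)}_{j+c_{i+1}}$. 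Once that cancellation is verified the cleaner $F$-recursion follows, and multiplying back by $(xq;q)_{\infty}$ recovers the statement; alternatively, since this is precisely \cite[Proposition 3.1 and (3.5)]{CW}, the conclusion may simply be quoted.
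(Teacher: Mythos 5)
The paper itself contains no proof of Theorem \ref{cwrec}: it is imported wholesale from Corteel--Welsh, as the citation in the theorem header indicates, so your closing fallback (``the conclusion may simply be quoted'') is literally the paper's treatment. What you sketch before that is, in substance, Corteel--Welsh's own argument: your algebraic reduction is correct, since $(xq;q)_{|J|-1}(xq^{|J|+1};q)_{\infty}=(xq;q)_{\infty}/(1-xq^{|J|})$ does convert the stated $G$-identity into the $F$-recursion, and this is exactly the relation between (3.5) and Proposition 3.1 of \cite{CW}; likewise your stripping operation, with the profile-change rule (decrement where a run of $J$ is entered, increment where it is exited), is the correct bijective core.

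Two substantive comments on the part you leave open. First, what you call the ``main obstacle'' is not one: once you verify that removing the first part (equal to $\max(\boldsymbol{\lambda})$) of $\lambda^{(i)}$ for every $i\in J$ is a bijection from the set of $\boldsymbol{\lambda}$ of profile $\boldsymbol{c}$ with $\lambda^{(i)}_1=\max(\boldsymbol{\lambda})$ for all $i\in J$ onto pairs consisting of a cylindric partition of profile $\boldsymbol{c}(J)$ and a gap $m\geq 0$, the term indexed by $J$ is the generating function of exactly those $\boldsymbol{\lambda}$ with $J\subseteq S(\boldsymbol{\lambda}):=\{i\in I_{\boldsymbol{c}}\mid \lambda^{(i)}_1=\max(\boldsymbol{\lambda})\}$; for each fixed $\boldsymbol{\lambda}$ the signed count is then $\sum_{\emptyset\neq J\subseteq S(\boldsymbol{\lambda})}(-1)^{|J|-1}=1$, a binomial identity, with $S(\boldsymbol{\lambda})\neq\emptyset$ guaranteed by your first-row monotonicity observation. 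No sign-reversing involution is required. (Note this tacitly needs positive level: for the all-zero profile the right-hand side is an empty sum and the identity is false, so $I_{\boldsymbol{c}}\neq\emptyset$ must be assumed; it also explains why $J\subseteq I_{\boldsymbol{c}}$ is forced, since otherwise $\boldsymbol{c}(J)$ would have negative entries.) Second, a slip in your bookkeeping: after stripping, the largest part is \emph{not} unchanged --- it may drop by any amount $m\geq 0$ --- and the factor $(1-xq^{|J|})^{-1}=\sum_{m\geq 0}x^{m}q^{|J|m}$ records precisely this gap between the old and new maxima, not ``how many times the maximal value is repeated.'' With these two points repaired your outline is a complete proof; alternatively the citation alone suffices, which is what the paper does.
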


In this paper, when a profile is written in one-line format $(c_0,\dots,c_{r-1})$, the nonnegative integer $c_i$ is the one indexed by $i+r\mathbb{Z}\in\mathbb{Z}/r\mathbb{Z}$ in the profile. 

\begin{Prop}\label{PropqdiffG}
The generating functions $G_{(3,0,0)}(x,q)$ and $G_{(1,1,1)}(x,q)$ satisfy the following $q$-difference equations respectively.
\begin{align*}
{} &(1+xq^5)G_{(3,0,0)}(x,q)\\
&=(1+xq^{2}+2xq^{3}+2xq^{4}+2xq^{5}+2x^{2}q^{6}+2x^{2}q^{7}+2x^{2}q^{8}+x^{2}q^{9}+x^{3}q^{11})G_{(3,0,0)}(xq^{3},q)\\
&\quad +xq^6(1+xq^2)(1-xq^4)(1-xq^5)G_{(3,0,0)}(xq^{6},q),\\
{} &(1+xq^4)G_{(1,1,1)}(x,q)\\
&= (1+2xq+2xq^{2}+2xq^{3}+xq^{4}+x^{2}q^{3}+2x^{2}q^{4}+2x^{2}q^{5}+2x^{2}q^{6}+x^{3}q^{7})G_{(1,1,1)}(xq^{3},q)\\
&\quad +xq^{3}(1+xq)(1-xq^4)(1-xq^5)G_{(1,1,1)}(xq^6,q).
\end{align*}
\end{Prop}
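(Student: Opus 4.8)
The plan is to specialize the Corteel-Welsh recursion (Theorem \ref{cwrec}) to $r=3$ and to the four level-$3$ profiles $(3,0,0)$, $(2,1,0)$, $(2,0,1)$ and $(1,1,1)$, to eliminate the two intermediate profiles so as to obtain a closed first-order $2\times 2$ system relating $G_{(3,0,0)}$ and $G_{(1,1,1)}$, and then to decouple that system into the two scalar $q$-difference equations of the statement via the Murray-Miller algorithm.

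First I would check that under the operation $\boldsymbol{c}\mapsto\boldsymbol{c}(J)$ of Theorem \ref{cwrec} the $\mathbb{Z}/3\mathbb{Z}$-rotation orbits of these four profiles are permuted among themselves, so that the recursion closes on $\{(3,0,0),(2,1,0),(2,0,1),(1,1,1)\}$ once rotated profiles are identified through the cyclic symmetry $G_{(c_0,c_1,c_2)}=G_{(c_1,c_2,c_0)}$. Writing out the nonempty subsets $J\subseteq I_{\boldsymbol{c}}$ for each of the four profiles then produces precisely the four relations recorded in Example \ref{cwrecex}; for instance $(3,0,0)$ has only $J=\{0\}$, sending it to $(2,1,0)$, while $(2,1,0)$ has the three subsets of $\{0,1\}$, sending it to $(2,0,1)$ twice and to $(1,1,1)$.

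Next I would solve that linear system for the two auxiliary functions $G_{(2,0,1)}$ and $G_{(2,1,0)}$ in terms of $G_{(3,0,0)}$ and $G_{(1,1,1)}$, back-substitute, and collect powers of $x$ and $q$ to arrive at the four intermediate relations displayed just before \eqref{Grel}, and hence at the matrix recursion \eqref{Grel} itself. Applying the Murray-Miller decoupling procedure to \eqref{Grel} then converts this first-order $2\times 2$ system into a single second-order scalar $q$-difference equation for each of $G_{(3,0,0)}$ and $G_{(1,1,1)}$; after the dilation $x\mapsto xq^3$ and clearing the rational prefactors that the algorithm introduces, these normalize to the two identities in the statement.

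The hard part will be the Murray-Miller elimination itself: it is a mechanical but lengthy manipulation over the field of rational functions in $x$ (with $q$ a parameter), and the subtle point is that the denominators introduced at each step must cancel so cleanly that the final leading coefficients collapse to the factored forms $xq^6(1+xq^2)(1-xq^4)(1-xq^5)$ and $xq^3(1+xq)(1-xq^4)(1-xq^5)$ respectively. I would carry out this bookkeeping with a computer algebra system and then confirm the factorizations by direct expansion, after which the two equations follow at once.
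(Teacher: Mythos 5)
Your proposal follows exactly the paper's own route: the Corteel-Welsh recursion specialized to the four level-3 profiles (with rotated profiles identified), elimination of $G_{(2,1,0)}$ and $G_{(2,0,1)}$ to reach the $2\times 2$ system \eqref{Grel}, and Murray-Miller decoupling followed by the dilation $x\mapsto xq^3$ and clearing of denominators. The argument is correct as proposed, including the observation that the recursion closes on these profiles and that the final coefficients factor as stated.
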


\begin{proof}\label{cwrecex}
By the Corteel-Welsh recursion formula (Theorem \ref{cwrec}), we have
\begin{align*}
G_{(3,0,0)}(x) &= G_{(2,1,0)}(xq), \\
G_{(2,1,0)}(x) &= 2G_{(2,0,1)}(xq)-(1-xq)G_{(1,1,1)}(xq^2), \\
G_{(2,0,1)}(x) &= G_{(3,0,0)}(xq)+G_{(1,1,1)}(xq)-(1-xq)G_{(2,1,0)}(xq^2), \\
G_{(1,1,1)}(x) &= 3G_{(2,1,0)}(xq)-3(1-xq)G_{(2,0,1)}(xq^2)+(1-xq)(1-xq^2)G_{(1,1,1)}(xq^3).
\end{align*}
Thus, we easily see
\begin{align}
\begin{pmatrix}
G_{(3,0,0)}(x) \\
G_{(1,1,1)}(x) 
\end{pmatrix}
=
\begin{pmatrix}
2xq^3 & 1+xq^2 \\
3xq^3(1+xq) & 1+2xq+2xq^2+x^2q^3
\end{pmatrix}
\begin{pmatrix}
G_{(3,0,0)}(xq^3) \\
G_{(1,1,1)}(xq^3) 
\end{pmatrix}.
\label{Grel}
\end{align}
By the Murray-Miller algorithm (see ~\cite[Appendix B]{TT}), we get
the result.
\end{proof}



\section{Andrews-Gordon type series}\label{ags}

\subsection{Certificate recurrences}\label{certe}
Let $r\geq 2$ and we denote the set of maps from $\mathbb{Z}^r$ to $\mathbb{Q}(q)$
by $\MAP(\mathbb{Z}^r,\mathbb{Q}(q))$, with the variables $n,k_2,\dots,k_r$ in this order.

Let $f\in \MAP(\mathbb{Z}^r,\mathbb{Q}(q))$.
The shift operators $\ENU$, $\KEI{2},\dots,\KEI{r}$ are defined by
\begin{align*}
\ENU f(n,k_2,\dots,k_r) &= f(n-1,k_2,\dots,k_r),\\
\KEI{i} f(n,k_2,\dots,k_r) &= f(n,k_2,\dots,k_{i-1},k_i-1,k_{i+1},\dots,k_r)
\end{align*}
for $2\leq i\leq r$.
As usual, we say that $f$ is summable if 
$\{(k_2,\dots,k_r)\in\mathbb{Z}^{r-1}\mid f(n,k_2,\dots,k_r)\ne 0\}$
is a finite set for any $n\in\mathbb{Z}$. In this case, 
\begin{align}
f_n=\sum_{k_2,\dots,k_r\in\mathbb{Z}}f(n,k_2,\dots,k_r)
\label{efuenu}
\end{align}
is well-defined.
Let $\NONC$ be a $\mathbb{Q}(q)$-subalgebra in $\END_{\textrm{$\mathbb{Q}(q)$-lin}}(\MAP(\mathbb{Z}^r,\mathbb{Q}(q)))$
generated by the shift operators $\ENU$, $\KEI{2},\dots,\KEI{r}$ and the multiplications $q^n$, $q^{k_2},\dots,q^{k_r}$.
Note that in $\NONC$ we have
\begin{align}
\begin{split}
  \ENU\KEI{i}=\KEI{i}\ENU,\quad
q^{k_i}\ENU=\ENU q^{k_i},\quad
q^{n}\KEI{i}=\KEI{i} q^{n},\\
\ENU q^{n} = q^{n-1}\ENU,\quad
\KEI{j} q^{k_i} = q^{k_i-\delta_{i,j}}\KEI{j},\quad
q^{n}q^{k_i}=q^{k_i}q^{n},
\end{split}
\label{noncrel}
\end{align}
for $2\leq i,j\leq r$.
We denote by $\mathbb{Q}(q)[q^n]$ the $\mathbb{Q}(q)$-subalgebra generated by $q^n$ in $\NONC$, which 
is clearly a polynomial $\mathbb{Q}(q)$-algebra generated by $q^n$.
The following is standard in
deriving a recurrence relation (see ~\cite[\S3]{Rie}). 
For completeness, we duplicate a proof.
\begin{Prop}\label{certprop}
Assume $f\in \MAP(\mathbb{Z}^r,\mathbb{Q}(q))$ is summable and 
there exists $J\geq 0$, $p_0,\dots,p_J\in\mathbb{Q}(q)[q^n]$ and $C_2,\dots,C_r\in\NONC$ such that
\begin{align}
\Big(\sum_{j=0}^{J}p_j\ENU^j + \sum_{i=2}^{r}(1-\KEI{i})C_i\Big)f=0.
\label{certr}
\end{align}
Then, we have
a $q$-holonomic recurrence (see \eqref{efuenu})
$f_n+p_1f_{n-1}+\dots+p_Jf_{n-J}=0$.
\end{Prop}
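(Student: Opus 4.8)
The plan is to apply the hypothesized operator relation to $f$, read it off as a pointwise identity in all the variables, and then sum over all $(k_1,\dots,k_r)\in\mathbb{Z}^r$; the point is that the telescoping part $\sum_i(1-\KEI{i})C_i$ contributes nothing after this summation, while the $\ENU$-part reproduces the desired recurrence. Throughout I normalize $p_0=1$, which is what makes the leading coefficient of the claimed recurrence equal to $1$.

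First I would record that the hypothesis is an equality in $\MAP(\mathbb{Z}\times\mathbb{Z}^r,\mathbb{Q}(q))$, so that for every $(n,k_1,\dots,k_r)$ one has
\[
\sum_{j=0}^{J}\bigl(p_j\ENU^jf\bigr)(n,k_1,\dots,k_r)+\sum_{i=1}^{r}\bigl((1-\KEI{i})C_if\bigr)(n,k_1,\dots,k_r)=0 .
\]
I would then fix $n$ and sum over $(k_1,\dots,k_r)\in\mathbb{Z}^r$. Since $f$ is summable and every generator of $\NONC$ (the shift operators $\ENU,\KEI{1},\dots,\KEI{r}$ and the multiplications by $q^n,q^{k_1},\dots,q^{k_r}$) carries a summable function to a summable function, and therefore so does every element of $\NONC$, each function appearing above is summable; hence for fixed $n$ only finitely many terms are nonzero and all the interchanges of summation below are legitimate.

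Next I would handle the two groups of terms separately. Because $p_j\in\mathbb{Q}(q)[q^n]$ depends only on $n$ and not on any $k_i$, it is a scalar with respect to the $(k_1,\dots,k_r)$-summation, and $\ENU^j$ only shifts the $n$-slot; therefore $\sum_{k_1,\dots,k_r}(p_j\ENU^jf)(n,\dots)=p_j\sum_{k_1,\dots,k_r}f(n-j,\dots)=p_jf_{n-j}$ by the definition of $f_{n-j}$. For each telescoping term I would put $g=C_if$ (summable by the previous paragraph) and compute the inner sum over $k_i$ with the remaining coordinates frozen:
\[
\sum_{k_i\in\mathbb{Z}}\bigl(g(n,\dots,k_i,\dots)-g(n,\dots,k_i-1,\dots)\bigr)=0,
\]
which vanishes because $g$ has finite support in $k_i$; summing out the other coordinates keeps it zero, so $\sum_{k_1,\dots,k_r}((1-\KEI{i})C_if)(n,\dots)=0$ for every $i$.

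Combining the two computations yields $\sum_{j=0}^{J}p_jf_{n-j}=0$, i.e.\ $f_n+p_1f_{n-1}+\dots+p_Jf_{n-J}=0$, as claimed. The only step that is not purely formal --- and hence the main (if minor) obstacle --- is the justification that $C_if$ remains summable, so that the difference in $k_i$ genuinely telescopes to $0$; this is exactly where one uses that $\NONC$ is generated by the shift operators and the scalar multiplications by powers of $q$, each of which preserves summability.
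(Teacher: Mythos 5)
Your proof is correct and follows essentially the same route as the paper: apply the operator identity pointwise, sum out the $k$-variables, observe that each $(1-\KEI{i})C_if$ telescopes to zero thanks to the summability of $C_if$, and read off the recurrence from the remaining terms. The only cosmetic difference is that the paper sums over a finite box $\{-M,\dots,M+1\}^r$ chosen so that all relevant functions vanish outside it, whereas you sum over all of $\mathbb{Z}^r$ and justify the rearrangements by summability (which you, unlike the paper, actually verify for $C_if$ via the generators of $\NONC$).
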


\begin{proof}
Let $G_i=C_if$ for $2\leq i\leq r$ and note that it is summable.
For $n\in\mathbb{Z}$, take $M>0$ so that 
$(k_2,\dots,k_r)\not\in\{-M,-M+1,\dots,M-1,M\}^{r-1}$ implies
\begin{align*}
0=f(n,k_2,\dots,k_r)=\cdots=f(n-J,k_2,\dots,k_r)=G_2(n,k_2,\dots,k_r)=\dots=G_r(n,k_2,\dots,k_r).
\end{align*}
By summation $\sum_{k_2=-M}^{M+1}\dots\sum_{k_r=-M}^{M+1}$ of 
$\sum_{j=0}^{J}p_jf(n-j,k_2,\dots,k_r)$, which is equal to 
$-\sum_{i=2}^{r}(G_i(n,k_2,\dots,k_i,\dots,k_r)-G_i(n,k_2,\dots,k_i-1,\dots,k_r))$,
we get the result.
\end{proof}


\subsection{Automatic derivation of $q$-difference equations via $q$-Sister Celine}\label{sister}
The goal of this subsection is to give a brief review of
an automatic calculation of a $q$-difference equation of
an Andrews-Gordon type series
\begin{align}
  H(x,q)
  =\sum_{n\in\mathbb{Z}}h_n(q)x^n
  =\sum_{i_1,\dots,i_{\ell}\geq 0}
  \frac{q^{h(i_1,\dots,i_{\ell})+f_1i_1+\dots+f_{\ell}i_{\ell}}}{(q^{d_1};q^{d_1})_{i_1}\cdots(q^{d_{\ell}};q^{d_{\ell}})_{i_{\ell}}}x^{c_1i_1+\dots+c_{\ell}i_{\ell}},
\label{agh}
\end{align}
where $\ell\geq 2$, $f_1,\dots,f_\ell$ are 
integers, $c_1,\dots,c_\ell,d_1,\dots,d_{\ell}$ are positive integers such that $c_1=1$, and $h(i_1,\dots,i_{\ell})$ is a 
quadratic term, which takes an integer value for any $(i_1,\dots,i_{\ell})\in\mathbb{Z}^{\ell}$.

The algorithm is a typical application of $q$-version of the Sister Celine technique~\cite{Rie}
to a $q$-proper hypergeometric term~\cite[Definition 2.3]{Rie}
\begin{align*}
  F(n,i_2,\dots,i_{\ell})=
  \frac{q^{G(n,i_2,\dots,i_{\ell})+z_1n+z_2i_2+\dots+z_{\ell}i_{\ell}}}{(q^{d_1};q^{d_1})_{n-(c_2i_2+\dots+c_{\ell}i_{\ell})}(q^{d_2};q^{d_2})_{i_2}\cdots(q^{d_{\ell}};q^{d_{\ell}})_{i_{\ell}}},
\end{align*}
where
the quadratic term $G$ and $z_1,\dots,z_{\ell}\in\mathbb{Z}$ are determined by
the equation
\begin{align*}
  G(n,i_2,\dots,i_{\ell})+z_1n+ z_2i_2+\dots+z_{\ell}i_{\ell}
  =h(i_1,i_2,\dots,i_{\ell})+f_1i_1+f_2i_2+\dots+f_{\ell}i_{\ell},
\end{align*}
and $i_1=n-(c_2i_2+\dots+c_{\ell}i_{\ell})$. 
Note that $F(n,i_2,\dots,i_{\ell})$ is well-defined for any arguments
by the convention $\frac{1}{(q^d;q^d)_{-m}}=0$ for positive integers $d$ and $m$ (See ~\cite[p.~3]{Yen} and ~\cite[\S2.1]{Rie}).
In the following, we expand the quadratic term $G$ as
\begin{align*}
  G(a,b_2\dots,b_{\ell})
=\sum_{t=1}^{\ell}g_{t,t}{b_t\choose 2}
  +\sum_{1\leq t<u\leq \ell}g_{t,u}b_tb_u,
\end{align*}
where we put $b_1=a$.
We also put $g_{u,t}=g_{t,u}$ for $1\leq t<u\leq\ell$ as a convenience.

\begin{Def}[{\cite[Definition 2.5]{Rie}}]
For a finite subset $S\subseteq \mathbb{Z}_{\geq 0}^{\ell}$ such that
$(0,\dots,0)\in S$ and for 
$(i,j_2,\dots,j_{\ell})\in S$, let
a Laurent polynomial $T^{(S)}_{i,j_2,\dots,j_{\ell}}\in\mathbb{Q}(q)[Q_1,Q_1^{-1},\dots,Q_{\ell},Q_{\ell}^{-1}]$ be
\begin{align*}
T^{(S)}_{i,j_2,\dots,j_{\ell}} &= q^{-z_1i-z_2j_2-z_{\ell}j_{\ell}}
(q^{-M_Sd_1}Q_1^{d_1}Q_2^{-c_2d_1}\cdots Q_{\ell}^{-c_{\ell}d_1};q^{-d_1})_{i-c_2j_2-\dots -c_{\ell}j_{\ell}-M_S}\\
&\quad
\Big(\prod_{t=2}^{\ell}(Q_t^{d_t};q^{-d_t})_{j_{t}}\Big)
\Big(q^{\sum_{t=1}^{\ell}g_{t,t}{j_t+1\choose 2}+\sum_{1\leq t<u\leq \ell}g_{t,u}j_tj_{u}}\Big)
\Big(\prod_{t=1}^{\ell}Q_t^{-\sum_{u=1}^{\ell} g_{t,u}j_{u}}\Big),
\end{align*}
where $j_1=i$ and $M_S=\min\{i-c_2j_2-\dots -c_{\ell}j_{\ell}\mid (i,j_2,\dots,j_{\ell})\in S\}$.
\end{Def}


\begin{Ex}\label{exsister}
For an Andrews-Gordon type series 
\begin{align*}
H(x,q)=
\sum_{s,t\geq 0}
\frac{q^{s^2+2t^2+2st}}{(q^2;q^2)_s(q^2;q^2)_t}x^{s+t},
\end{align*}
we take a $q$-proper hypergeometric term
\begin{align*}
F(n,i_2)
= \frac{q^{(n-i_2)^2+2i_2^2+2(n-i_2)i_2}}{(q^2;q^2)_{n-i_2}(q^2;q^2)_{i_2}}
= \frac{q^{2{n\choose 2}+2{i_2\choose 2}}}{(q^2;q^2)_{n-i_2}(q^2;q^2)_{i_2}}q^{n}q^{i_2}, 
\end{align*}
i.e., $z_1=z_2=1$, $g_{1,1}=g_{2,2}=2$, $g_{1,2}=0(=g_{2,1})$, and $c_1=c_2=1$, $d_1=d_2=2$.

For $S=\{(0,0),(0,1),(1,0),(1,1)\}$, we have $M_S=-1$ and
\begin{align*}
  T^{(S)}_{0,0} = 1-q^2Q_1^2Q_2^{-2},\quad
  T^{(S)}_{1,0} = q^{-1}(1-q^2Q_1^2Q_2^{-2})(1-Q_1^2Q_2^{-2})q^{2}Q_1^{-2},\\
T^{(S)}_{0,1} = q^{-1}(1-Q_2^2)q^2Q_2^{-2},\quad
T^{(S)}_{1,1} = q^{-2}(1-q^2Q_1^2Q_2^{-2})(1-Q_2^2)q^{4}Q_1^{-2}Q_2^{-2}.
\end{align*}
\end{Ex}

\begin{Prop}\label{kurikaeshi}
Assume that, in $\mathbb{Q}(q)[Q_1^{\pm1},\dots,Q_{\ell}^{\pm1}]$, we have
\begin{align}
\sum_{(i,j_2,\dots,j_{\ell})\in S}
\sigma_{i,j_2,\dots,j_{\ell}}T^{(S)}_{i,j_2,\dots,j_{\ell}}
=0
\label{algoeqmain}
\end{align}
for an $S$-tuple of Laurent polynomials $(\sigma_{i,j_2,\dots,j_{\ell}})_{(i,j_2,\dots,j_{\ell})\in S}\in(\mathbb{Q}(q)[Q_1^{\pm1},Q_2^{\pm1},\dots,Q_{\ell}^{\pm1}])^{S}$.
Then, in $\mathbb{Q}(q)$, we have, for $n,i_2,\dots,i_{\ell}\in\mathbb{Z}$,
\begin{align*}
\sum_{(i,j_2,\dots,j_{\ell})\in S}\sigma_{i,j_2,\dots,j_{\ell}}(q^n,q^{i_2},\dots,q^{i_{\ell}})
F(n-i,i_2-j_2,\dots,i_{\ell}-j_{\ell})=0.
\end{align*}
\end{Prop}

\begin{proof}
Note that the left hand side of \eqref{algoeqmain} is
a slightly expanded version of 
$L_{F,S}$ in ~\cite{Rie} (see ~\cite[Definition 2.5]{Rie}),
based on the assumptions of $S$. The claim is exactly a combination of ~\cite[Theorem 2.1]{Rie} and ~\cite[Lemma 2.1]{Rie} (and the remark following ~\cite[Lemma 2.1]{Rie}).
\end{proof}

For a Laurent polynomial $\tau\in\mathbb{Q}(q)[Q_1,Q_1^{-1},\dots,Q_{\ell},Q_{\ell}^{-1}]$,
we expand as
\begin{align*}
\tau=
\sum_{(p_2,\dots,p_{\ell})\in\mathbb{Z}^{\ell-1}}
\tau^{(p_2,\dots,p_{\ell})}Q_2^{p_2}\dots Q_{\ell}^{p_{\ell}},
\end{align*}
where $\tau^{(p_2,\dots,p_{\ell})}\in\mathbb{Q}(q)[Q_1,Q_1^{-1}]$, which
is nonzero for finitely many $(p_2,\dots,p_{\ell})$.

\begin{Cor}\label{shubu}
In Proposition \ref{kurikaeshi},
we further assume that we have
\begin{align}
\sum_{(j_2,\dots,j_{\ell})\in S_i}
\sigma^{(p_2,\dots,p_{\ell})}_{i,j_2,\dots,j_{\ell}}q^{j_2p_2+\dots+j_{\ell}p_{\ell}}=0
\label{addass}
\end{align}
in $\mathbb{Q}(q)[Q_1,Q_1^{-1}]$ for each $i\in\mathbb{Z}$ and $(p_2,\dots,p_{\ell})\in\mathbb{Z}^{\ell-1}\setminus\{(0,\dots,0)\}$, where 
\begin{align*}
  S_i = \{(j_2,\dots,j_{\ell})\in\mathbb{Z}^{\ell-1}\mid
  (i,j_2,\dots,j_{\ell})\in S\}.
\end{align*}
We have
a $q$-holonomic recurrence 
\begin{align}
0=\sum_{i\in\mathbb{Z}}h_{n-i}\sum_{(j_2,\dots,j_{\ell})\in S_i}
\sigma^{(0,\dots,0)}_{i,j_2,\dots,j_{\ell}}.
\label{reclocal}
\end{align}
\end{Cor}

\begin{proof}
Recall $\NONC$ in \S\ref{certe} and put $r=\ell$.
By Proposition \ref{kurikaeshi}, we have
\begin{align*}
\sum_{(i,j_2,\dots,j_{\ell})\in S}\sigma_{i,j_2,\dots,j_{\ell}}(q^n,q^{k_2},\dots,q^{k_{\ell}})
\ENU^{i}\KEI{2}^{j_2}\cdots\KEI{\ell}^{j_{\ell}}
=0
\end{align*}
in $\NONC$. Note that the left hand side is equal to 
\begin{align*}
\sum_{(i,j_2,\dots,j_{\ell})\in S}
  \sum_{(p_2,\dots,p_{\ell})\in \mathbb{Z}^{\ell-1}}
  \sigma^{(p_2,\dots,p_{\ell})}_{i,j_2,\dots,j_{\ell}}(q^n)q^{p_2k_2+\dots+p_{\ell}k_{\ell}}\ENU^{i}\KEI{2}^{j_2}\cdots\KEI{\ell}^{j_{\ell}},
\end{align*}
which is, by \eqref{noncrel}, equal to
\begin{align}
  \sum_{\substack{(i,j_2,\dots,j_{\ell})\in S \\ (p_2,\dots,p_{\ell})\in \mathbb{Z}^{\ell-1}}} \sigma^{(p_2,\dots,p_{\ell})}_{i,j_2,\dots,j_{\ell}}(q^n)q^{p_2j_2+\dots+p_{\ell}j_{\ell}}\ENU^{i}\KEI{2}^{j_2}\cdots\KEI{\ell}^{j_{\ell}}q^{p_2k_2+\dots+p_{\ell}k_{\ell}}.
  \label{temp9}
\end{align}
Thanks to the obvious identity $1-x^n=(1-x)(1+x+\dots+x^{n-1})$ for $n\geq 1$, there exist 
$T_2,\dots,T_{\ell}\in\NONC$ such that \eqref{temp9} is equal to 
\begin{align*}
\Delta_2T_2+\dots+\Delta_{\ell}T_{\ell}+
\sum_{(p_2,\dots,p_{\ell})\in \mathbb{Z}^{\ell-1}}
\sum_{(i,j_2,\dots,j_{\ell})\in S}
\sigma^{(p_2,\dots,p_{\ell})}_{i,j_2,\dots,j_{\ell}}(q^n)q^{p_2j_2+\dots+p_{\ell}j_{\ell}}\ENU^{i}q^{p_2k_2+\dots+p_{\ell}k_{\ell}},
\end{align*}
where $\Delta_j=1-\KEI{j}$ (see ~\cite[\S4]{Rie}).
Thus, if \eqref{addass} holds,
we get the claim by Proposition \ref{certprop} thanks to the obvious identity
\begin{align*}
  h_n=
  \sum_{(i_2,\dots,i_{\ell})\in\mathbb{Z}^{\ell-1}} F(n,i_2,\dots,i_{\ell}).
\end{align*}
Note that, for $n\in\mathbb{Z}$,
$F(n,i_2,\dots,i_{\ell})$ is nonzero for finitely many $(i_2,\dots,i_{\ell})\in\mathbb{Z}^{\ell-1}$ 
by the positivities of $c_2,\dots,c_{\ell}$.
\end{proof}

\begin{Ex}\label{reicertexp}
In Example \ref{exsister}, we have
\begin{align*}
T^{(S)}_{0,0}q^2Q_1^{-4}(1-Q_1^2)
-T^{(S)}_{1,0}qQ_1^{-2}
-T^{(S)}_{1,1}=0.
\end{align*}
Note that the additional assumption \eqref{addass}
in Corollary \ref{shubu}
is vacuously true because each $\sigma_{(i,j_2)}\in\mathbb{Q}[Q_1,Q_1^{-1},Q_2,Q_2^{-1}]$ is $Q_2$-free. By the division algorithm, we have
\begin{align*}
  q^2Q_1^{-4}(1-Q_1^2)-qQ_1^{-2}\ENU-\ENU\KEI{2}
  =
  q^2Q_1^{-4}(1-Q_1^2)-(1+qQ_1^{-2})\ENU+(1-\KEI{2})\ENU.
\end{align*}
By Proposition \ref{certprop} (or by Corollary \ref{shubu} without the above rewritten), we have 
\begin{align*}
  q^{2-4M}(1-q^{2M})h_M=(1+q^{-2M+1})h_{M-1}
\end{align*}
for $M\in\mathbb{Z}$.
This is equivalent to 
$(1-q^{2M})h_M=(q^{2M-1}+q^{4M-2})h_{M-1}$,
i.e., 
\begin{align*}
H(x,q)=(1+xq)H(xq^2,q)+xq^2H(xq^4,q).
\end{align*}
\end{Ex}

To summarize, the algorithm to obtain a $q$-difference equation
of \eqref{agh} is as follows.
For examples of certificate recurrences, see ~\cite{Ts1} and the proof of Proposition \ref{gs111}.
Note that ~\cite[Theorem 2.2]{Rie} guarantees that
(Step 3) below gives a nontrivial relation for large enough $S$
and $D=\{(0,\dots,0)\}$. For an effective bound on $S$,
see ~\cite[Proof of Theorem 2.2]{Rie}.

\begin{enumerate}
\item[(Step 1)] As an input, give finite subsets
  $S\subseteq\mathbb{Z}_{\geq0}^{\ell}$ and
  $D\subseteq\mathbb{Z}^{\ell-1}$ such that $(0,\dots,0)\in S$ and
  $(0,\dots,0)\in D$ heuristically.
\item[(Step 2)] Solve the equation \eqref{algoeqmain} and \eqref{addass} under the ansatz
  \begin{align*}
    \sigma_{i,j_2,\dots,j_{\ell}}=\sum_{(p_2,\dots,p_{\ell})\in D}
    \sigma^{(p_2,\dots,p_{\ell})}_{i,j_2,\dots,j_{\ell}}Q_2^{p_2}\cdots Q_{\ell}^{p_{\ell}}
\end{align*}
    for $(i,j_2,\dots,j_{\ell})\in S$
  (i.e., solve the simultaneous linear equation on
  the unknowns $(\sigma_{i,j_2,\dots,j_{\ell}}^{(p_2,\dots,p_{\ell})})_{(i,j_2,\dots,j_{\ell})\in S, (p_2,\dots,p_{\ell})\in D}$ over
  $\mathbb{Q}(q,Q_1)$).
\item[(Step 3)] If the relation \eqref{reclocal} is nontrivial, transform it to a nontrivial $q$-difference equation of $H(x,q)$. Otherwise, try (Step 1) for different $S$ and $D$.
\end{enumerate}


\subsection{Andrews-Gordon type series for $G_{(1,1,1)}(x,q)$ and $G_{(3,0,0)}(x,q)$}\label{gs111}

\begin{Prop}\label{g111}
We have
\begin{align*}
G_{(1,1,1)}(x,q)=\sum_{a,b,c,d\geq 0}\frac{q^{a^2+b^2+3c^2+3d^2+2ab+3ac+3ad+3bc+3bd+6cd}}{(q;q)_a(q;q)_b(q^3;q^3)_c(q^3;q^3)_d}x^{a+b+c+2d}.
\end{align*}
\end{Prop}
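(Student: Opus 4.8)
The plan is to recognise the right‑hand side as the unique power‑series solution of the very $q$‑difference equation that $G_{(1,1,1)}(x,q)$ already satisfies. Let $F(a,b,c,d)$ denote the summand, so that $H(x,q)=\sum_{a,b,c,d\geq 0}F(a,b,c,d)\,x^{a+b+c+2d}$ is the claimed series, and put $h_m=[x^m]H$. Extracting the coefficient of $x^m$ from the $G_{(1,1,1)}$ equation of Proposition \ref{PropqdiffG} produces, for every $m$, a linear recurrence
\begin{align*}
(1-q^{3m})h_m = \big(\text{a }\mathbb{Q}(q)[q^m]\text{-combination of }h_{m-1},h_{m-2},h_{m-3},h_{m-4}\big),
\end{align*}
because the constant term of the first polynomial on the right is $1$ while the left factor is $1+xq^4$, and the second polynomial has no constant term. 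Since $1-q^{3m}\neq 0$ for $m\geq 1$, this recurrence together with the data $h_{m}=0$ for $m<0$ and $h_0=1$ determines its solution uniquely. The constant term of $H$ is the $a=b=c=d=0$ summand, namely $1$, and $G_{(1,1,1)}(0,q)=1$ as well; so it suffices to prove that $h_m$ obeys the same order‑$4$ recurrence.

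To set up the summation machinery I would cast $h_m$ in the format of the $q$‑holonomic recurrence proposition of the preceding subsection. The exponent $a+b+c+2d$ contains variables with coefficient $1$, so I eliminate one of them—say set $c=m-a-b-2d$—which avoids any parity constraint and keeps everything rational in the relevant $q$‑powers. Then $f(m,a,b,d):=F(a,b,m-a-b-2d,d)$ is a $q$‑proper hypergeometric term in $(m,a,b,d)$: every shift ratio (for instance $(q^3;q^3)_c/(q^3;q^3)_{c-1}=1-q^{3(m-a-b-2d)}$) is a rational function of $q^m,q^a,q^b,q^d$. The convention $1/(q;q)_k=0$ for $k<0$ confines the support to $a,b,c,d\geq 0$, so $f$ is summable and $h_m=\sum_{a,b,d}f(m,a,b,d)$ is exactly the quantity $f_m$ of \eqref{efuenu} with $r=3$ and $(k_1,k_2,k_3)=(a,b,d)$.

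Next I would apply a $q$‑analogue of Sister Celine's technique to $f$ to manufacture an operator identity
\begin{align*}
\Big(\sum_{j=0}^{4}p_j N^{j}+(1-K_1)C_1+(1-K_2)C_2+(1-K_3)C_3\Big)f=0,
\end{align*}
with $p_j\in\mathbb{Q}(q)[q^m]$ and certificates $C_i\in\mathsf{S}$. Dividing through by $f$ reduces this to a single rational‑function identity in $q^m,q^a,q^b,q^d$; verifying that identity is the computational heart of the argument. Once it is checked, summing over $a,b,d$ annihilates the telescoping terms $(1-K_i)C_i$ (the boundary contributions vanish by summability), and the recurrence proposition yields $\sum_{j=0}^{4}p_j\,h_{m-j}=0$.

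The delicate point—the step I expect to be the main obstacle—is not the mechanical production of the certificate but confirming that the polynomials $p_j$ so obtained coincide, after normalising to the common leading coefficient $1-q^{3m}$, with the coefficients of the recurrence read off from Proposition \ref{PropqdiffG}; the two recurrences arise by entirely different routes (creative telescoping versus the Borodin–Corteel–Welsh $q$‑difference equation) and must be matched term by term. With the recurrences identified and the shared initial data $h_{m<0}=0$, $h_0=1$, the uniqueness recorded in the first paragraph forces $h_m=[x^m]G_{(1,1,1)}(x,q)$ for all $m$, which is the asserted identity.
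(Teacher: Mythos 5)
Your proposal is correct and takes essentially the same route as the paper: uniqueness of the power-series solution of the Proposition \ref{PropqdiffG} equation given $h_0=1$ and $h_{m<0}=0$, elimination of one summation variable so the summand becomes $q$-proper hypergeometric, and a $q$-analogue of Sister Celine's technique producing a certificate recurrence that is then matched against the coefficient recurrence extracted from the $q$-difference equation. The only cosmetic differences are that the paper eliminates $a$ rather than $c$, and instead of hunting for an order-$4$ certificate whose coefficients equal the $p'_j$ directly, it finds an order-$3$ certificate recurrence and then exhibits $(p'_0,\dots,p'_4)$ as an explicit $\mathbb{Q}(q,q^n)$-linear combination of that recurrence and its shift --- precisely the fallback your ``matched term by term'' step would require if the minimal telescoped recurrence turns out to have order less than $4$.
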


\begin{proof}
First, we show that the Andrews-Gordon type series in Proposition \ref{g111} 
satisfies the same $q$-difference equation for $G_{(1,1,1)}(x,q)$ in Proposition \ref{PropqdiffG}.
Let
\begin{align*}
F(n,b,c,d)=
\frac{q^{(n-b-c-2d)^2+b^2+3c^2+3d^2+2(n-b-c-2d)b+3(n-b-2c-2d)(c+d)+3bc+3bd+6cd}}{(q;q)_{n-b-c-2d}(q;q)_b(q^3;q^3)_c(q^3;q^3)_d}
\end{align*}
and put
\begin{align*}
p_0 &= -2q^{6n+8}-q^{3n+12}+2q^{3n+8}+q^{12}, \quad
r_0 = 0,\quad
s_0 = (2q^{3n+8}+q^{12})(q^b-1),\\
q_0 &= (4q^{4n+9}+2q^{4n+8}+q^{n+12}+2q^{n+10})(q^{2n}-q^{b+2c+d})+(2q^{3n+8}+q^{12})(q^{n+2c+d}-q^b),\\
p_1 &= -(2q^{6n}+3q^{3n+4}+4q^{3n+3}+4q^{3n+2}+2q^{3n+1}+2q^{6}+2q^{5}+2q^{4})q^{3n+5},\\
q_1 &= (4q^{6n+6}+2q^{6n+5}+q^{3n+9}+2q^{3n+7})(q^{3n}C+q^{2+b}+(B-Cq^b)q^{n+2+2c+d})\\
&\quad+ (4q^{7n+8}+2q^{7n+7}-q^{4n+11}+2q^{4n+9}-q^{n+12})q^{b+2c+d}\\
&\quad+ q^{n+7}(2q^{3n}+q^{4})((1+C)q^{3n}+Dq^{1+b})q^{2c+d}+ (2q^{3n+3}+2q^{3n+1}+2q^{3n}+q^{5}+2q^{4})q^{3n+6},\\
r_1 &= (2q^{9n+5}+q^{6n+9}),\quad
r_2 = (2q^{9n+3}+q^{6n+4})-(2q^{7n+4}+q^{4n+8})q^{b+2c+d},\\
s_1 &= (4q^{6n+8}+2q^{6n+7}+q^{3n+11}+2q^{3n+9})(1-q^b) + (2q^{3n}+q^4)q^{8+n+b+2c+d}, \\
p_2 &= 2q^{9n+4}+2q^{9n+3}-3q^{6n+8}+2q^{6n+5}+q^{6n+4}-q^{3n+9}, \\
q_2 &= ((4q^{10n+4}+2q^{10n+3}+q^{7n+7}+2q^{7n+5})(B+q^b)+2q^{10n+3}+q^{7n+7}-q^{4n+8+b}-2q^{7n+7+b})Cq^{2c+d}\\
&\quad +(-2q^{9n+4}+q^{6n+8}-2q^{6n+5})+2(q^{7n+7}+q^{7n+4}+q^{4n+8})q^{b+2c+d} \\
&\quad +(2q^{6n+7}+q^{3n+8})(q^{3n-4}C+Bq^{n+2c+d}+q^{n+2c+d}+q^{1+b}),\\
s_2 &= (2q^{6n+7}+q^{3n+8})(q(1-q^b)-2q^{n+b+2c+d}),\quad
r_3 = (4q^{3n}+2q)q^{7n+b+2c+d},\quad
s_3 = 0,\\
p_3 &= -(2q^{9n+2}+q^{6n+3}), \quad q_3 = (2q^{9n}+q^{6n+1})(q^2+(1+q^b+B)Cq^{n+2c+d}).
\end{align*}

Let $\TTE=\varepsilon_0+\varepsilon_1N+\dots+\varepsilon_3N^3$ for $\varepsilon\in\{p,q,r,s\}$.
One can check 
\begin{align}
(\TTP+(1-B)\TTQ+(1-C)\TTR+(1-D)\TTS)F(n,b,c,d)=0,
\label{onecancheck}
\end{align}
where $NF(n,b,c,d)=F(n-1,b,c,d)$, $BF(n,b,c,d)=F(n,b-1,c,d)$, $CF(n,b,c,d)=F(n,b,c-1,d)$ and $DF(n,b,c,d)=F(n,b,c,d-1)$.
By Proposition \ref{certprop}, we have
\begin{align*}
p_0f_n+p_1f_{n-1}+p_2f_{n-2}+p_3f_{n-3}=0.
\end{align*}

On the other hand, the $q$-difference equation for $G_{(1,1,1)}(x,q)$ in Proposition \ref{PropqdiffG}
is equivalent to the claim that
\begin{align*}
p'_0g_n+p'_1g_{n-1}+p'_2g_{n-2}+p'_3g_{n-3}+p'_4g_{n-4}=0.
\end{align*}
holds for all $n\in\mathbb{Z}$. Here, 
\begin{align*}
p'_0 &= -1+q^{3n},\quad p'_1 = -q^{4}+2q^{3n-2}+2q^{3n-1}+2q^{3n}+q^{3n+1}+q^{6n-3},\\
p'_2 &= q^{3n-3}+2q^{3n-2}+2q^{3n-1}+2q^{3n}+q^{6n-8}-q^{6n-5}-q^{6n-4},\\
p'_3 &= q^{3n-2}-q^{6n-10}-q^{6n-9}+q^{6n-6},\quad p'_4 = q^{6n-11},
\end{align*}
and $g_{n}(q)$ is defined by $G_{(1,1,1)}(x,q)=\sum_{n\in\mathbb{Z}} g_{n}(q)x^n$.
One can check that
\begin{align*}
\frac{1}{-2q^{3n+8}-q^{12}}(p_0,p_1,p_2,p_3,0)
+
\frac{1}{-2q^{3n+4}-q^{8}}(0,\ENU p_0,\ENU p_1,\ENU p_2,\ENU p_3)
\end{align*}
is equal to $(p'_0,p'_1,p'_2,p'_3,p'_4)$, which implies that
the both $q$-difference equations are the same.
Finally, the claim follows from $g_{n}=f_n=0$ for $n<0$, $g_0=f_0=1$ and $p_0\ne 0$, $p'_0\ne 0$ for $n\geq 1$.
\end{proof}


In the rest of this section, 
we omit the display of explicit certificate recurrences
because of the following four reasons: they can be automatically get, 
they are large,
they seem to give us no insight, and
we have been provided them in the first version of this paper on arXiv.
Instead, we specify our input $(S,D)$ to the algorithm.
This high level specification guarantees that
certificate recurrences can be reproduced if desired.
For example, the certificate recurrence in Example \ref{reicertexp}
is obtained by putting $(S,D)=(S_{2,2},D_{1})$, where
$Y_{a_1,...,a_b}=\{(c_1,\dots,c_b)\in\mathbb{Z}^b\mid 0\leq c_j< a_j\}$ for $Y\in\{S,D\}$, and \eqref{onecancheck} is obtained 
by putting $(S,D)=(S_{4,3,2,2},D_{2,3,2})$.

\begin{Prop}\label{g300}
We have
\begin{align*}
G_{(3,0,0)}(x,q)=\sum_{a,b,c,d\geq 0}\frac{q^{a(a+1)+b(b+2)+3c(c+1)+3d(d+1)+2ab+3ac+3ad+3bc+3bd+6cd}}{(q;q)_a(q;q)_b(q^3;q^3)_c(q^3;q^3)_d}x^{a+b+c+2d}.
\end{align*}
\end{Prop}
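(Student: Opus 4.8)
The plan is to follow the proof of Proposition \ref{g111} line for line. As there, it is enough to show that the Andrews-Gordon type series in Proposition \ref{g300} satisfies the same $q$-difference equation for $G_{(3,0,0)}(x,q)$ in Proposition \ref{PropqdiffG}, because in both series the coefficient of $x^0$ equals $1$ (the only contribution being $a=b=c=d=0$) and the coefficient of $x^n$ vanishes for $n<0$. Since the series and $G_{(3,0,0)}(x,q)$ then satisfy one and the same $q$-difference equation with identical initial data, they must coincide.

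To set up the recurrence, I would put $n=a+b+c+2d$, eliminate $a=n-b-c-2d$, and define
\begin{align*}
F(n,b,c,d)=\frac{q^{a(a+1)+b(b+2)+3c(c+1)+3d(d+1)+2ab+3ac+3ad+3bc+3bd+6cd}}{(q;q)_{a}(q;q)_b(q^3;q^3)_c(q^3;q^3)_d},\quad a=n-b-c-2d,
\end{align*}
so that $f_n=\sum_{b,c,d\geq 0}F(n,b,c,d)$ is the coefficient of $x^n$ in the proposed series. Using the $q$-analogue of Sister Celine's technique, I would then produce a certificate: operators $\TTP,\TTQ,\TTR,\TTS$, each a polynomial of degree at most $3$ in the shift $\ENU$ with coefficients in $\mathbb{Q}(q)[q^n]$ and in the multiplications $q^b,q^c,q^d$, satisfying
\begin{align*}
(\TTP+(1-B)\TTQ+(1-C)\TTR+(1-D)\TTS)F(n,b,c,d)=0,
\end{align*}
where $B,C,D$ are the backward shifts in $b,c,d$. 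Summing over $b,c,d$ annihilates the telescoping terms and leaves the $q$-holonomic recurrence
\begin{align*}
p_0f_n+p_1f_{n-1}+p_2f_{n-2}+p_3f_{n-3}=0,
\end{align*}
where $p_j$ is the coefficient of $\ENU^j$ in $\TTP$.

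Finally, I would rewrite the $q$-difference equation for $G_{(3,0,0)}$ from Proposition \ref{PropqdiffG} as the equivalent five-term coefficient recurrence $p'_0g_n+p'_1g_{n-1}+p'_2g_{n-2}+p'_3g_{n-3}+p'_4g_{n-4}=0$, where $G_{(3,0,0)}(x,q)=\sum_n g_n x^n$, and exhibit explicit rational multipliers $u(q^n)$ and $v(q^n)$ with
\begin{align*}
(p'_0,p'_1,p'_2,p'_3,p'_4)=u\,(p_0,p_1,p_2,p_3,0)+v\,(0,\ENU p_0,\ENU p_1,\ENU p_2,\ENU p_3),
\end{align*}
exactly as in the proof of Proposition \ref{g111}; here $\ENU p_j$ denotes $p_j$ with $q^n$ replaced by $q^{n-1}$. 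This shows that $f_n$ obeys the same recurrence as $g_n$, and combined with the matching initial conditions it forces $f_n=g_n$ for every $n$. The main obstacle is producing the certificate $\TTP,\TTQ,\TTR,\TTS$ explicitly: this is a substantial but purely mechanical creative-telescoping computation in the three auxiliary summation variables over $\mathbb{Q}(q)$, and once it is in hand both the telescoping sum and the final linear-algebra identification of the two recurrences are routine verifications.
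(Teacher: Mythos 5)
Your proposal is correct and follows the paper's own proof essentially verbatim: the paper likewise reduces Proposition \ref{g300} to verifying the $q$-difference equation of Proposition \ref{PropqdiffG} plus initial conditions, eliminates one summation variable (the paper keeps $a,c,d$ and sets $b=n-a-c-2d$, whereas you eliminate $a$ — an immaterial, symmetric choice), exhibits a degree-$3$ Sister Celine certificate annihilating $F$, and then matches the resulting four-term recurrence for $f_n$ against the five-term coefficient recurrence for $G_{(3,0,0)}$ exactly as in the proof of Proposition \ref{g111}. The only content absent from your write-up is the explicit certificate $(\TTP,\TTQ,\TTR,\TTS)$, which the paper displays in full and which, as you correctly note, is a mechanical creative-telescoping computation.
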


\begin{proof}
The argument is the same as in \S\ref{gs111}
by $(S,D)=(S_{4,3,2,2},D_{2,3,2})$ after swapping $a$ and $b$.
As declared in \S\ref{certe}, we omit a detail of the automatic proof.
\end{proof}

\subsection{A proof of Theorem \ref{RRidentAG} for $\BIR$}\label{proof}
As in \S\ref{gs111}, the key is to show that the Andrews-Gordon type series in  Theorem \ref{RRidentAG} for $\BIR$ 
satisfies the same $q$-difference equation for $f_{\BIR}(x,q)$ in Proposition \ref{PropqdiffR}.
As declared in \S\ref{certe}, we omit a detail of the automatic derivation of
the former only saying that it is enough to put $(S,D)=(S_{7,3,2,2},D_{2,2,2})$
in the algorithm.

\subsection{A proof of Theorem \ref{RRidentAG} for $\BIRP$}\label{proofp}
The argument is the same as in \S\ref{proof}
by $(S,D)=(S_{7,2,2,2},D_{2,3,3})$ after swapping $a$ and $b$.
As declared in \S\ref{certe}, we omit a detail of the automatic proof.

\subsection{A proof of Theorem \ref{RRbiiden}}\label{finalsec}
By Proposition \ref{g111}, Proposition \ref{g300} and Theorem \ref{RRidentAG}, we have
$f_{\BIR}(q)=G_{(3,0,0)}(1,q)$ and $f_{\BIRP}(q)=G_{(1,1,1)}(1,q)$.
Thanks to \eqref{charcalc} and Theorem \ref{borp}, we get the results. 

\hspace{0mm}

\noindent{\bf Acknowledgments.} 
The author was supported by
the Research Institute for Mathematical
Sciences, an International Joint Usage/Research Center located in Kyoto
University,
JSPS Kakenhi Grants 20K03506, 23K03051, Inamori Foundation, JST CREST Grant Number JPMJCR2113, Japan and Leading Initiative for Excellent Young Researchers, MEXT, Japan.






\begin{thebibliography}{99}



\bibitem{An1}
G.E.~Andrews, \textit{The Theory of Partitions}, 
Encyclopedia of Mathematics and its Applications, vol.2, Addison-Wesley, 1976.

\bibitem{An2}
G.E.~Andrews, \textit{$q$-series: their development and application in analysis, number theory, combinatorics, physics, and computer algebra},
CBMS Regional Conference Series in Mathematics, 66. Published for the Conference Board of the Mathematical Sciences, Washington, DC; by the American Mathematical Society, Providence, RI, 1986.


\bibitem{ASW}
G.E.~Andrews, A.~Schilling and S.O.~Warnaar,
\textit{An $A_2$ Bailey lemma and Rogers-Ramanujan-type identities},
J.Amer.Math.Soc. \textbf{12} (1999), 677--702.

\bibitem{Bor}
A.~Borodin, 
\textit{Periodic Schur process and cylindric partitions},
Duke Math.J. \textbf{140} (2007), 391--468. 


\bibitem{Cap0}
S.~Capparelli, 
\textit{On some representations of twisted affine Lie algebras and combinatorial identities},
J.Algebra \textbf{154} (1993), 335--355. 

\bibitem{CDA} 
S.~Corteel, J.~Dousse and A.~Uncu, 
\textit{Cylindric partitions and some new $A_2$ Rogers-Ramanujan identities}, 
Proc.Amer.Math.Soc. \textbf{150} (2022), 481--497.


\bibitem{CW} 
S.~Corteel and T.~Welsh, 
\textit{The $A_2$ Rogers-Ramanujan identities revisited},
Ann.Comb. \textbf{23} (2019), 683--694. 

\bibitem{DBT}
E.A.~de Kerf. G.G.A.~B\"auerle and A.P.E.ten Kroode
\textit{Finite and Infinite Dimensional Lie Algebras and Applications in Physics. Part 2}, Studies in Mathematical Physics vol7, North-Holland, 1997

\bibitem{DHLM}
M.~Delgado, R.~Hoffmann, S.~Linton, and J.J.~Morais,
\textit{Automata, a package on automata, Version 1.16},
https://gap-packages.github.io/automata/, Refereed GAP package, 2024.


\bibitem{FFW}
B.~Feigin, O.~Foda and T.~Welsh, 
\textit{Andrews-Gordon type identities from combinations of Virasoro characters},
Ramanujan J. \textbf{17} (2008), 33--52. 

\bibitem{FK}
I.~Frenkel and V.~Kac, 
\textit{Basic representations of affine Lie algebras and dual resonance models}, 
Invent.Math. \textbf{62} (1980/81), 23--66.

\bibitem{FW}
O.~Foda and T.~Welsh, 
\textit{Cylindric partitions, Wr characters and the Andrews-Gordon-Bressoud identities},
J.Phys.A \textbf{49} (2016), 164004, 37 pp. 

\bibitem{GK}
I.M.~Gessel and C.~Krattenthaler, 
\textit{Cylindric partitions},
Trans.Amer.Math.Soc. \textbf{349} (1997), 429--479. 


\bibitem{Kac}
V.~Kac.
\textit{Infinite Dimensional Lie Algebras}.
Cambridge University Press, 1990.

\bibitem{KKLW}
V.~Kac, D.~Kazhdan, J.~Lepowsky and R.~Wilson,
\textit{Realization of the basic representations of the Euclidean Lie algebras},
Adv.Math. \textbf{42} (1981), 83--112

\bibitem{Koe}
W.~Koepf, 
\textit{Hypergeometric Summation. 
An Algorithmic Approach to Summation and Special Function Identities}, 
Universitext. Springer, 2014. 

\bibitem{Kos}
B.~Kostant, 
\textit{The principal three-dimensional subgroup and the Betti numbers of a complex simple Lie group},
Amer.J.Math.\textbf{81} (1959), 973--1032. 

\bibitem{KR3}
S.~Kanade and M.C.~ Russell,
\textit{Completing the $A_2$ Andrews-Schilling-Warnaar identities},
Int.Math.Res.Not. IMRN 2023, no.20, 17100--17155


\bibitem{LM}
J.~Lepowsky and S.~Milne, 
\textit{Lie algebraic approaches to classical partition identities},
Adv.Math. \textbf{29} (1978), 15--59. 


\bibitem{LW1}
J.~Lepowsky and R.~Wilson, 
\textit{Construction of the affine Lie algebra $A^{(1)}_1$}, 
Comm.Math.Phys. \textbf{62} (1978), 43--53.

\bibitem{LW2}
J.~Lepowsky and R.~Wilson, 
\textit{A Lie theoretic interpretation and proof of the Rogers-Ramanujan identities},
Adv.Math. \textbf{45} (1982), 21--72. 

\bibitem{LW3}
J.~Lepowsky and R.~Wilson, 
\textit{The structure of standard modules. I. Universal algebras and the Rogers-Ramanujan identities}, 
Invent.Math. \textbf{77} (1984), 199--290.

\bibitem{LW4}
J.~Lepowsky and R.~Wilson, 
\textit{The structure of standard modules. II. The case $A^{(1)}_{1}$, principal gradation}, 
Invent.Math. \textbf{79} (1985), 417--442. 

\bibitem{MP}
A.~Meurman and M.~Primc, 
\textit{Annihilating ideals of standard modules of $sl(2,{\bold{C}})^{\sim}$ and combinatorial identities}, 
Adv.Math. \textbf{64} (1987), 177--240.

\bibitem{Nan}
D.~Nandi,
\textit{Partition identities arising from the standard $A^{(2)}_2$-modules of level 4}, 
Ph.D. Thesis (Rutgers University), 2014.

\bibitem{Rie}
A.~Riese, 
\textit{qMultiSum -- a package for proving q-hypergeometric multiple summation identities}, 
J.Symbolic Comput. \textbf{35} (2003), 349--376.

\bibitem{Seg}
G.~Segal, 
\textit{Unitary representations of some infinite-dimensional groups},
Commun.Math.Phys. \textbf{80} (1981) 301--342. 

\bibitem{Sil}
A.V.~Sills, 
\textit{An Invitation to the Rogers-Ramanujan Identities. With a Foreword by George E. Andrews}, 
CRC Press, (2018).

\bibitem{Ts1}
S.~Tsuchioka,
\textit{A proof of the second Rogers-Ramanujan identity via Kleshchev multipartitions},
Proc.Japan Acad.Ser.A Math.Sci. \textbf{99} (2023) no.3, 23-26

\bibitem{TT}
M.~Takigiku and S.~Tsuchioka,
\textit{A proof of conjectured partition identities of Nandi}, 
Amer.J.Math. \textbf{146} (2024) 405-433.

\bibitem{TT2}
M.~Takigiku and S.~Tsuchioka,
\textit{Andrews-Gordon type series for the level 5 and 7 standard modules of the affine Lie algebra ${A}^{(2)}_2$},
Proc.Amer.Math.Soc. \textbf{149} (2021) 2763--2776.

\bibitem{Unc}
A.~Uncu,
\textit{Proofs of Modulo 11 and 13 Cylindric Kanade-Russell Conjectures for $A_2$ Rogers-Ramanujan Type Identities}, arXiv:2301.01359.

\bibitem{War2}
O.~Warnaar,
\textit{Hall-Littlewood functions and the $A_2$ Rogers-Ramanujan identities}, 
Adv.Math. \textbf{200} (2006) 403--434.

\bibitem{War}
O.~Warnaar,
\textit{The $A_2$ Andrews-Gordon identities and cylindric partitions}, 
Trans.Amer.Math.Soc.Ser.B \textbf{10} (2023), 715--765.

\bibitem{War3}
O.~Warnaar,
\textit{An $A_2$ Bailey tree and $A^{(1)}_2$ Rogers-Ramanujan-type identities},
arXiv:2303.09069.

\bibitem{WZ}
H.~Wilf and D.~Zeilberger, 
\textit{An algorithmic proof theory for hypergeometric (ordinary and ``$q$'') multisum/integral identities},
Invent.Math. \textbf{108} (1992), 575--633.

\bibitem{Yen}
L.~Yen, 
\textit{A two-line algorithm for proving $q$-hypergeometric identities},
J.Math.Anal.Appl. \textbf{213} (1997) 1--14.
\end{thebibliography}

\end{document}